\documentclass{amsart}

\usepackage[UKenglish]{babel}


\newcommand{\simpleoverline}[1]{\dot{#1}}
\newcommand{\doubleoverline}[1]{\ddot{#1}}
\newcommand{\tripleoverline}[1]{\dddot{#1}}
\newcommand{\quadrupleoverline}[1]{\ddddot{#1}}

\newcommand{\Quad}{\operatorname{Quad}}
\newcommand{\Cub}{\operatorname{Cub}}
\newcommand{\Quart}{\operatorname{Quart}}

\renewcommand{\th}{^{\rm th}}

\newcommand{\inmargin}[1]{}

\theoremstyle{plain}
\newtheorem*{notation*}{Notation}
\newtheorem*{relations*}{Relations}
\newtheorem*{question*}{Question}
\newtheorem*{observation*}{Observation}
\newtheorem{lemma}{Criterion}
\newtheorem{theorem}{Theorem}
\newtheorem*{corollary*}{Corollary}
\newtheorem{proposition}{Proposition}
\newtheorem*{fact*}{Fact}
\newtheorem*{TQT*}{Timmesfeld's Quadratic Theorem}
\newtheorem*{theoremone*}{Theorem {\ref{t:SL2:series}}}
\newtheorem*{theoremtwo*}{Theorem {\ref{t:Symn-1NatSL2K}}}
\theoremstyle{remark}
\newtheorem{step}{Claim}[subsubsection]
\newtheorem*{localnotation*}{Notation}
\newtheorem*{localobservation*}{Observation}
\newtheorem*{localrelations*}{Relations}
\newtheorem*{localremark*}{Remark}
\newtheorem*{localremarks*}{Remarks}
\newtheorem{localconsequence}{Consequence}
\theoremstyle{definition}
\newtheorem*{remark*}{Remark}
\newtheorem*{remarks*}{Remarks}

\makeatletter
\newenvironment{proofclaim}
{\par\pushQED{\qed}
\normalfont \topsep6\p@\@plus6\p@\relax\trivlist
\item[\hskip\labelsep
\emph{Proof of Claim.}]
\ignorespaces

}
{\popQED\endtrivlist\@endpefalse}
\makeatother


\newcommand{\<}{\langle}
\renewcommand{\>}{\rangle}

\newcommand{\Z}{\mathbb{Z}}
\newcommand{\Q}{\mathbb{Q}}
\newcommand{\F}{\mathbb{F}}
\newcommand{\K}{\mathbb{K}}

\newcommand{\fg}{\mathfrak{g}}

\newcommand{\im}{\operatorname{im}}
\newcommand{\Id}{\operatorname{Id}}

\newcommand{\SL}{\operatorname{SL}}
\newcommand{\PSL}{\operatorname{PSL}}
\renewcommand{\sl}{\mathfrak{sl}}
\newcommand{\Ann}{\operatorname{Ann}}
\newcommand{\End}{\operatorname{End}}
\newcommand{\Sym}{\operatorname{Sym}}
\newcommand{\Nat}{\operatorname{Nat}}

\title{Symmetric Powers of Nat SL(2,K)}
\author{Adrien Deloro}

\begin{document}

\maketitle

\makeatletter
\def\@fnsymbol#1{\ensuremath{\ifcase#1\or \or *\or \dagger\or \ddagger\or
   \mathsection\or \mathparagraph\or \|\or **\or \dagger\dagger
   \or \ddagger\ddagger \else\@ctrerr\fi}}
\makeatother

\renewcommand{\thefootnote}{\fnsymbol{footnote}}

\renewcommand{\>}{\rangle}
\renewcommand{\sl}{\mathfrak{sl}}
\renewcommand{\th}{^{\rm th}}

\begin{flushright}
\em In magnis et voluisse sat est.
\end{flushright}
\bigskip

\begin{quote}
\textbf{Abstract.} We identify the representations $\K[X^k, X^{k-1}Y, \dots, Y^k]$ among abstract $\Z[\SL_2(\K)]$-modules. One result is on $\Q[\SL_2(\Z)]$-modules of short nilpotence length and generalises a classical ``quadratic'' theorem by Smith and Timmesfeld. Another one is on extending the linear structure on the module from the prime field to $\K$.
All proofs are by computation in the group ring using the Steinberg relations.
\end{quote}

\footnote{MSC 2010: 20G05; 20C07

Institut de Math\'ematiques de Jussieu -- Paris Rive Gauche

adrien.deloro@imj-prg.fr}


We study here certain representations of the group $\SL_2(\K)$ as an abstract group; more precisely, we aim at identifying the various symmetric powers of $\Nat \SL_2(\K)$, conveniently thought of as the various spaces of homogeneous polynomials in two variables with fixed degree, among $\Z[\SL_2(\K)]$-modules.
Differently put, we study the inclusion of the class of representations of the algebraic group $\SL_2$ over the field $\K$, in the wider class of $\Z[\SL_2(\K)]$-modules. The question may sound not quite irrelevant to admirers of the Borel-Tits Theorem on abstract homomorphisms between groups of points of algebraic groups; we deal with abstract modules instead.

We cannot use Lie-theoretic, algebraic geometric, nor character-theoretic methods since $\SL_2(\K)$ is to us but an abstract group and $\K$ is arbitrary. We cannot even use linear algebra since we do not assume our modules to be vector spaces.
Our only method is then brute force computation in images of the group ring.
So the problem rephrases into: To which extent is the representation theory of $\SL_2(\K)$ determined by the ``inner'' group-theoretic constraints?

The present study is therefore yet another instance of the general problem of investigating representations of algebraic groups from a purely group-theoretic perspective, which we tackled in \cite{TV-I} and \cite{TV-II}. It can however be read independently of the latter two articles and was written in this intention.

One should simply recall a result first proved by F. G. Timmesfeld and S. Smith separately.
In what follows, $\Nat$ stands for the natural representation, here the action of $\SL_2(\K) = \SL(\K^2)$ on $\K^2$. Moreover $U$ stands for a unipotent subgroup of $\SL_2(\K)$, and the assumption on the $U$-length being $2$ means that $U$ acts \emph{quadratically}: for all $u_1, u_2 \in U$, one has $(u_1 -1)(u_2 - 1) = 0$ in $\End(V)$.
One word on this assumption -- since we are dealing with abstract modules instead of vector spaces, there is no dimension around. Unipotence length is then the natural candidate to measure the complexity of target modules; the length of $\Nat \SL_2(\K)$ is $2$ (and more generally the length of $\Sym^k \Nat \SL_2(\K)$ is also its dimension over $\K$, namely $k+1$).

\begin{TQT*}[{\cite[Theorem 3.4 of chapter I]{Timmesfeld}, also \cite{Smith}}]
Let $\K$ be a field, $G = \SL_2(\K)$, and $V$ be a simple $\Z[G]$-module of $U$-length $2$. Then there exists a $\K$-vector space structure on $V$ making it isomorphic to $\Nat \SL_2(\K)$.
\end{TQT*}

Our original motivation was to find a similar result identifying the adjoint representation, viz. the action on $2\times 2$ matrices with null trace, among cubic $G$-modules, i.e. modules of $U$-length exactly $3$, with an obvious definition.
As a matter of fact, our very first step towards the adjoint representation was a joint work with G. Cherlin in the context of model theory \cite{CDSmall}.
The present work could be taken as an insane expansion of the former; see our final corollary.
(Very parenthetically said, but still in length $3$: our work is independent from the more general study led by M. Gr\"uninger \cite{GCubic}, which takes place in Timmesfeld's theory of abstract ``rank one groups'' \cite{Timmesfeld}. Gr\"uninger deals with abstract groups not necessarily isomorphic with $\SL_2(\K)$, and this loose assumption leads to numerous difficulties we ignore by being restrictive on the group. These parentheses were then nothing but a digression since we shall focus on $\SL_2(\K)$.)

Returning to representations of $\SL_2(\K)$ seen as abstract modules, we divided the problem into two tasks: first deal with the prime field $\K_1$, then go up from the prime field $\K_1$ to the extension field $\K$.
The dominant inspiration for doing so was the idea arguably due to Chevalley that the group $\SL_2(\K)$ is a vertebrate animal, viz. with an endoskeleton and then flesh on it: that is, that fundamental group-theoretic constraints can be seen at the level of the subgroup of points over the prime subfield, and that these bony relations are naturally clad in well-rounded copies of the field.
Our two main results stated below reflect this two-step methodology; notice that the first tried to be excessively ambitious and pretended to analyse the skeleton at the level of the very integers.

\begin{theoremone*}
Let $V$ be a $\Q[\SL_2(\Z)]$-module. Suppose that for every unipotent element $u \in \SL_2(\Z)$, $(u-1)^5 = 0$ in $\End V$. Then $V$ has a composition series each factor of which is a direct sum of copies of $\Q\otimes_\Z \Sym^k \Nat \SL_2(\Z)$ for $k \in \{0, \dots, 4\}$.
\end{theoremone*}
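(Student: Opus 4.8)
The plan is to pin down, by computation in the group ring, the image $A$ of $\Q[\SL_2(\Z)]$ in $\End V$: to show that $A$ is a \emph{finite-dimensional} $\Q$-algebra whose simple modules are exactly those among the $\Q\otimes_\Z\Sym^k\Nat\SL_2(\Z)$ with $k\le 4$, after which the conclusion is automatic.

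I would begin with two reductions exploiting that $V$ is a $\Q$-vector space. First, the central involution $-\Id$ splits $V$ according to the sign by which it acts, which reduces to two cases (in fact to modules over $\PSL_2(\Z)$ and a twist thereof). Second, for any unipotent $u$ the operator $u-1$ is nilpotent with $(u-1)^5=0$ by hypothesis, so $N_u:=\log u$ is a well-defined nilpotent with $N_u^5=0$ and $u(t):=\exp(tN_u)$ makes sense for every $t\in\Q$, coinciding with the genuine group element when $t\in\Z$. Write $E=\log u_+(1)$ and $F=\log u_-(1)$ for the two standard root groups; the divided powers $e_i=E^i/i!$ and $f_j=F^j/j!$ ($0\le i,j\le 4$) then generate $A$, we have the polynomial identities $u_+(t)=\sum_i t^ie_i$ and $u_-(t)=\sum_j t^jf_j$, and the divided-power relations $e_ie_{i'}=\binom{i+i'}{i}e_{i+i'}$ (vanishing as soon as $i+i'\ge5$), and likewise for the $f_j$. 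Beyond these, the only input is that $\SL_2(\Z)$ is generated by $u_+(1)$ and $u_-(1)$ subject essentially to the braid relation $u_+(1)u_-(-1)u_+(1)=u_-(-1)u_+(1)u_-(-1)$ and to $w^4=1$, $w$ denoting that common element.

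The heart of the proof — and the step I expect to be the real obstacle — is to expand these two relations inside $A$. Feeding the truncated exponentials into $\exp(E)\exp(-F)\exp(E)=\exp(-F)\exp(E)\exp(-F)$ and comparing homogeneous parts yields a system of relations among the $e_i,f_j$; the decisive ones are those that, just as the identities $EFE=E$ and $FEF=F$ do in the quadratic case ($5$ replaced by $2$), forbid long alternating words in $E$ and $F$ and so force $A$ to be finite-dimensional, the relation $w^4=1$ being available to clean up the remaining ambiguity. To identify the simple modules I would construct, out of this relation system, a substitute for the diagonal torus that $\SL_2(\Z)$ itself does not contain: concretely, that the $u_\pm(t)$ for $t\in\Q$ together with $w(t):=u_+(t)u_-(-t^{-1})u_+(t)$ and $h(t):=w(t)w(1)^{-1}$ satisfy the full Steinberg relations of $\SL_2(\Q)$ (or of its universal central extension). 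Each such relation is a bounded-degree polynomial, respectively Laurent-polynomial, identity in the parameters; for integer values it reduces to the braid relation and $w^4=1$ in $\SL_2(\Z)$, and a polynomial over $\Q$ vanishing on a large enough grid of integers vanishes identically, so the identity holds over all of $\Q$. Thus $V$ becomes a $\Q$-module over a central extension of $\SL_2(\Q)$, still with every root element of nilpotence index $\le 5$, and with $\mathrm{Ad}(h(t))$ scaling $E$ by $t^2$ and $F$ by $t^{-2}$.

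From here it is the classical picture. Each simple $A$-module $S$ is finite-dimensional over $\Q$ (as $A$ is), carries the $\sl_2$-triple $E,F,H:=[E,F]$, decomposes into $h$-weight spaces with weights confined to $\{-4,\dots,4\}$ by $E^5=F^5=0$, and — being $\SL_2(\Z)$-irreducible with $\SL_2(\Z)$ Zariski-dense in $\SL_2$ — must be one of the $\Q\otimes_\Z\Sym^k\Nat\SL_2(\Z)$ with $k\le4$; each of these is absolutely irreducible over $\Q$. Finally, since $A$ is finite-dimensional its radical $\mathrm{rad}\,A$ is nilpotent, say $(\mathrm{rad}\,A)^d=0$, so $V\supseteq(\mathrm{rad}\,A)V\supseteq\dots\supseteq(\mathrm{rad}\,A)^dV=0$ is a finite filtration whose successive quotients are semisimple over $A/\mathrm{rad}\,A$, hence direct sums of copies of the simple modules just listed. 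This is the assertion. The delicate work is all in the previous paragraph: the combinatorics of the braid-relation expansion, and above all manufacturing the absent torus from the integral skeleton; once that is in place the rest is bookkeeping.
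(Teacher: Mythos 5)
Your overall architecture (split by the central involution, show the image $A$ of $\Q[\SL_2(\Z)]$ in $\End V$ is finite-dimensional, identify its simples, then take the radical filtration) is a genuinely different route from the paper, which never passes through finite-dimensionality but constructs the series directly, subquotient by subquotient, from operator identities derived from $(uw)^3=1$ and an explicit idempotent criterion. But as written your proposal has two gaps, and they are exactly where the content lies. First, the finite-dimensionality of $A$ is not proved: you only ``expect'' the expansion of the truncated braid relation to forbid long alternating words in $E$ and $F$. This cannot be waved through, because the analogous algebra $\Z[\SL_2(\Z)]/((u-1)^n)$ is \emph{not} finitely generated for $n\geq 7$ (Proposition \ref{p:ngeq7}: it surjects onto $\F_7$ of the infinite $(2,3,7)$ triangle group), so any argument must use the bound $5$ in an essential, quantitative way; the paper obtains the $n\leq 5$ finite generation only via a nontrivial Bass--Serre tree argument (Proposition \ref{p:ZSL2/Itf}) or as a by-product of the long computation, and it moreover stresses that even granted finite generation it is unclear how to identify the simple modules -- so this step is not a formality either.

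Second, the torus-manufacturing step is flawed as described. The Steinberg relations you want involve $w(t)=u_+(t)u_-(-t^{-1})u_+(t)$ and $h(t)=w(t)w(1)^{-1}$, in which the parameter enters through $t^{-1}$; for an integer $t$ with $|t|\geq 2$ the element $u_-(-t^{-1})$ does not lie in $\SL_2(\Z)$, so these identities are \emph{not} specializations of relations holding in $\SL_2(\Z)$, and there is no ``grid of integers'' on which they are already known -- $\SL_2(\Z)$ contains no torus to anchor them. (The grid argument does legitimately extend relations such as $wu_+(s)w^{-1}=u_-(-s)$ from $s\in\Z$ to $s\in\Q$, but those are not the ones you need.) The relations involving $h(t)$ would have to be \emph{derived} inside $\End V$ from the truncated braid relation together with $(u-1)^5=0$, i.e.\ from the very computation you have deferred; even the basic fact that $E$, $F=-wEw^{-1}$ and $[E,F]$ form an $\sl_2$-triple is not automatic -- in the paper it is recovered, in the cubic case only, after the identity $x^2wx+xwx^2=2x$ has been extracted from the long calculation, and in length $4$ and $5$ no such clean structure is obtained on $V$ itself, only on subquotients. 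Your endgame (radical filtration, isotypic refinement, classification of finite-dimensional $\sl_2$-modules over $\Q$ with bounded nilpotency) is fine, but it rests entirely on these two unestablished steps, so the proposal as it stands does not constitute a proof.
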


Theorem \ref{t:SL2:series} is proved in \S\ref{S:SL2:bones} by an excessively painful computation which Maxime Wolff could legitimate, but not eliminate, with a geometric argument reproduced in \S\ref{s:SL2:geometry}. This leaves us with a number of questions:
\begin{itemize}
\item
What happens to Theorem \ref{t:SL2:series} when one takes $n = 6$ instead of $5$?
\item
What happens to Theorem \ref{t:SL2:series} with $\Q$ instead of $\Z$ and no bound on $n$?
\item
Does Wolff's geometric argument contain, or suggest, a less computational proof of Theorem \ref{t:SL2:series}?
\item
Does the computation in \S\ref{S:SL2:bones} contain, or bear, some geometry (in any sense)?
\end{itemize}
Let us be honest: the computation is a complete mystery to us and we wish to ask the community what its meaning can be.
The paper is a call for help and we will be delighted to offer a bottle of Scotch whisky to anyone explaining what is going on.
As for the behaviour over $\F_p$ instead of $\Q$ (with no bounds on $n$), we do not know either but this should be classical.

On the second task, namely predicting the structure of an $\SL_2(\K)$-module just by looking at the restricted $\SL_2(\K_1)$-module structure where $\K_1$ is the prime subfield, we obtained the following. The double factorial is defined by $n!! = (n-2)!!$ and $\oplus_I M$ means a direct sum of copies of $M$ indexed by some set $I$.

\begin{theoremtwo*}
Let $n\geq 2$ be an integer and $\K$ be a field of characteristic $0$ or $\geq 2n+1$. Suppose that $\K$ is $2(n-1)!!$-radically closed. Let $G = \SL_2(\K)$ and $V$ be a $G$-module. Let $\K_1$ be the prime subfield and $G_1 = \SL_2(\K_1)$. Suppose that $V$ is a $\K_1$-vector space such that $V \simeq \oplus_I \Sym^{n-1} \Nat G_1$ as $\K_1 [G_1]$-modules.

Then $V$ bears a compatible $\K$-vector space structure for which one has $V \simeq\oplus_J \Sym^{n-1}\Nat G$ as $\K [G]$-modules.
\end{theoremtwo*}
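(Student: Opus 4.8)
The plan is to run the whole argument inside the image of the group ring $\K_1[G]$ in $\End_{\K_1}(V)$: first extract from the given $\K_1[G_1]$-module structure a fixed triple of operators playing the role of a Chevalley basis of $\sl_2$, then show by manipulating the Steinberg relations that the ambient $G$-action is forced to be the one obtained by integrating that triple, and finally read off both the $\K$-vector space structure and the desired isomorphism. For the first part, note that since $\operatorname{char}\K$ is $0$ or $\geq 2n+1>n$ the scalars $1,\dots,n-1$ are distinct in $\K_1$, so the given actions of $u^+(1),\dots,u^+(n-1)$ on $V$ (on each summand $\simeq\Sym^{n-1}\Nat G_1$ the standard unipotent) form an invertible Vandermonde system, and inverting it produces $\K_1$-linear endomorphisms $\varepsilon_1,\dots,\varepsilon_{n-1}$ of $V$ with $u^+(j)-1=\sum_{k=1}^{n-1}j^k\varepsilon_k$ and, by inspection on the summands, $\varepsilon_k=\varepsilon_1^k/k!$; set $e:=\varepsilon_1$, which acts on each summand as the weight-raising operator. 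The same procedure applied to $U^-$ yields $f$, and $h:=[e,f]$ then satisfies the $\sl_2$-relations and is diagonalisable on $V$ with integer eigenvalues.

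Write $V=\bigoplus_m V^{(m)}$ for the $h$-eigenspace decomposition, $m$ ranging over $\{-(n-1),\dots,n-1\}$ with the parity of $n-1$, which on each summand recovers the weight decomposition. A second Vandermonde argument -- using, when $\operatorname{char}\K=p\geq 2n+1$, an element of $\F_p^\times$ of order $>2(n-1)$, and $\lambda=2$ when $\operatorname{char}\K=0$ -- expresses $h$ as a $\K_1$-linear combination of finitely many torus elements $h(\lambda)$, $\lambda\in\K_1^\times$, so that $h$ commutes with the full torus $\{h(\mu):\mu\in\K^\times\}$ and every $V^{(m)}$ is $H$-stable. One also checks that $f$ restricts to $\K_1$-isomorphisms $V^{(m)}\xrightarrow{\sim}V^{(m-2)}$ for $m>-(n-1)$, so $V=\bigoplus_{k=0}^{n-1}f^k V^{(n-1)}$, and -- since $\K$ is quadratically closed, a weakening of the radical-closure hypothesis, so that $u^\pm(t)$ is $H$-conjugate to $u^\pm(1)$ -- that $u^+(t)$ (resp.\ $u^-(t)$) preserves the filtration by the subspaces $\bigoplus_{m'\geq m}V^{(m')}$ (resp.\ $\bigoplus_{m'\leq m}V^{(m')}$) and acts trivially on its graded pieces, for every $t\in\K$.

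The heart of the matter, and the main obstacle, is to show that the given $G$-action is the expected one over all of $\K$ and not merely over $\K_1$: concretely, that $u^+(t)-1=\sum_k t^k\varepsilon_k$ for every $t\in\K$, and likewise that $u^-(t)$ and $h(\mu)$ act on the $V^{(m)}$ in the expected way. I would do this by computation in the group ring, feeding in the prime-field identities and exploiting the commutator/Bruhat relation
\[
u^+(t)\,u^-(s)=u^-\!\Bigl(\tfrac{s}{1+ts}\Bigr)\,h(1+ts)\,u^+\!\Bigl(\tfrac{t}{1+ts}\Bigr)\qquad(1+ts\neq 0),
\]
which writes an arbitrary $h(\mu)$ as an explicit short word in the $u^\pm$. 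The precise amount $2(n-1)!!$ of radical closure is what is consumed precisely here: it provides enough mutually compatible roots of an arbitrary field element to reduce its contribution, across the $n$ weight lines simultaneously, to products and $H$-conjugates of prime-field data. Everything before this step is Vandermonde bookkeeping, and everything after it is checking vector-space axioms, so it is here that the Steinberg relations must do all the work.

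Granted that step, I would install the $\K$-structure as follows. On the highest-weight space $V^{(n-1)}$, define scalar multiplication by declaring $f(\mu\cdot v)$ to be the weight-$(n-3)$ component of $u^-(\mu)v$ (legitimate since $f\colon V^{(n-1)}\xrightarrow{\sim}V^{(n-3)}$), and transport this structure across the remaining $V^{(m)}$ by the isomorphisms induced by $f$. Additivity in $v$ and in $\mu$ falls out of $u^-(\mu)u^-(\nu)=u^-(\mu+\nu)$ together with the graded behaviour of $u^-$, while multiplicativity of scalars uses the relation $h(\mu)\,u^-(\nu)\,h(\mu)^{-1}=u^-(\mu^{-2}\nu)$ and the control on $h(\mu)$ from the previous step. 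One then checks that the resulting $\K$-structure restricts to the original $\K_1$-structure, that $e$, $f$, $h$ -- and hence, by the previous step, all of $G$ -- act $\K$-linearly, and that sending a $\K$-basis of $V^{(n-1)}$ to highest-weight vectors exhibits $V\simeq\bigoplus_J\Sym^{n-1}\Nat G$ as $\K[G]$-modules.
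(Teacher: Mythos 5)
There is a genuine gap, and it is at the step you yourself single out as ``the heart of the matter''. Your preparatory bookkeeping is fine and corresponds closely to the first claim of the paper's proof: extracting prime-field operators, identifying the weight/central-series decomposition, showing the weight spaces are stable under the full torus, and using quadratic closure (every element of $\K$ a square, hence $u_\lambda$ conjugate to $u_1$ by a torus element) to see that the full unipotent group respects the filtration. But the crucial content of the theorem is precisely to express the action of $t_\lambda$ and $u_\lambda$ for \emph{arbitrary} $\lambda\in\K$ in terms of prime-field data, and this you do not prove: you state a target identity, propose ``computation in the group ring'' via the Bruhat/commutator relation, and assert that the $2(n-1)!!$-radical closure ``is consumed precisely here'', without carrying out any of it. In the paper this step is the ``Timmesfeld equation''
\[t_\lambda\cdot \zeta_k(a_1) = \frac{(-1)^{n-1}}{n-1}\, \zeta_k\, x_{\lambda^{n+1-2k}}\,\zeta_2(a_1),\]
proved by a descending induction on $k$ starting at the middle weight $k=\lfloor\frac{n+1}{2}\rfloor$ (with a parity case distinction, quadratic closure entering for even $n$), using $t_\mu u_\lambda t_{\mu^{-1}}=u_{\lambda\mu^2}$, $wt_\lambda w^{-1}=t_\lambda^{-1}$ and the radical-closure hypothesis to extract roots from exponents such as $\lambda^{(n+3-2k)(n-1-2k)}$; this is where all the Steinberg-relation work happens, and nothing in your sketch substitutes for it. Your proposed route through the relation $u^+(t)u^-(s)=u^-(\frac{s}{1+ts})h(1+ts)u^+(\frac{t}{1+ts})$ is moreover of doubtful use, since it presupposes control of $u^\pm$ at arbitrary arguments, which is exactly the unknown.

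There is also a circularity in the way you formulate the missing step: the identity $u^+(t)-1=\sum_k t^k\varepsilon_k$ is not meaningful before the $\K$-structure exists, because $t\in\K$ cannot multiply a $\K_1$-linear endomorphism of a $\K_1$-vector space. The paper sidesteps this by \emph{defining} the scalar action, $\lambda\cdot\zeta_k(a_1)=\frac{(-1)^{n-1}}{n-1}\zeta_k x_\lambda\zeta_2(a_1)$ (close in spirit to your definition via the graded component of $u^-(\mu)v$), and then deriving additivity, multiplicativity and $G$-linearity from the Timmesfeld equation together with the existence of $(n+1-2k)$-th roots. Since your verification of multiplicativity and of $G$-linearity explicitly appeals to ``the control on $h(\mu)$ from the previous step'', the whole second half of your argument rests on the step that was never done; the proposal has the right skeleton but omits the core computation.
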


Theorem \ref{t:Symn-1NatSL2K} is proved in \S\ref{S:SL2:flesh} by a lighter computation which goes so smoothly that there may be something more general to look for.

Parenthetically said, Theorems \ref{t:SL2:series} and \ref{t:Symn-1NatSL2K} may be compared with the conclusions of \cite{TV-II}, a study of $\Z[\sl_2(\K)]$-modules where $\sl_2(\K)$ is the set of $2\times 2$ matrices with null trace seen as a Lie \emph{ring}, i.e. endowed with an addition and a Lie bracket but no vector space structure. We followed the two-step methodology discussed above; as one shall see the skeleton of the Lie ring is much more rigid than that of the group, arguably because of the Casimir element.


\begin{fact*}[{\cite[Variations n$^\circ$ 17 and n$^\circ$18]{TV-II}.}]
Let $n \geq 2$ be an integer and $\K_1$ be a \emph{prime} field of characteristic $0$ or $\geq n+1$.
Let $\fg_1 = \sl_2(\K_1)$ and $V$ be a $\fg_1$-module. If the characteristic of $\K$ is $0$ one requires $V$ to be torsion-free. Suppose that $x^n = 0$ in $\End V$; if $\K_1$ has characteristic $p$ with $n < p < 2n$, suppose further that $y^n = x^n = 0$ in $\End V$.

Then $V = \Ann_V(\fg_1) \oplus \fg_1 \cdot V$, and $\fg_1\cdot V$ is a $\K_1$-vector space with $\fg_1 \cdot V \simeq \oplus_{k = 1}^{n-1} \oplus_{I_k} \Sym^k \Nat \fg_1$ as $\K_1\fg_1$-modules.
\end{fact*}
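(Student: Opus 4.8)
The plan is to route the whole argument through the Casimir element. Fix the Chevalley generators $x, y, h$ of $\fg_1 = \sl_2(\K_1)$, so that $[h,x] = 2x$, $[h,y] = -2y$ and $[x,y] = h$, and set $c = 4yx + h^2 + 2h$ acting on $V$. A direct commutator check shows that $c$ is central in the subring of $\End V$ generated by the action of $\fg_1$; moreover $c\cdot V \subseteq \fg_1\cdot V$, since $4yx\cdot v = y\cdot(4xv)$ and $(h^2+2h)\cdot v = h(hv) + 2(hv)$. The scalars that will govern everything are the values $k(k+2) = (k+1)^2 - 1$ by which $c$ acts on $\Sym^k\Nat\fg_1$ for $0\le k\le n-1$: on a highest-weight (weight-$k$) vector $v$ one has $xv=0$, so $cv = (k^2+2k)v$.

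\textbf{The polynomial identity.} The heart of the matter is to show, by computation in $\End V$, that $p_n(c) = 0$, where $p_n(t) = \prod_{k=0}^{n-1}\bigl(t - k(k+2)\bigr)$. The tools are the enveloping-algebra relation $xy^m = y^m x + m\,y^{m-1}(h-m+1)$, its analogue with the roles of $x$ and $y$ exchanged, and the hypothesis $x^n = 0$ (together with $y^n = 0$ when $n<p<2n$): one expands the relations $x^n\cdot y^j = 0$ for suitable $j$ and collects the outcome, after enough cancellation, as a polynomial in $c$ and $h$ that forces the desired identity. Already $n=2$ is instructive: $x^2 y = yx^2 + 2(h-1)x$ gives $(h-1)x = 0$, hence $yx = \tfrac12(h^2-h)$ and $c = 3h^2$, and then expanding $x^2 y^2 = 0$ gives $h(h-1)(h+1) = 0$, whence $c(c-3) = 9h^2(h^2-1) = 9h\cdot h(h-1)(h+1) = 0$. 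The roots $k(k+2)$, $0\le k\le n-1$, are pairwise distinct in $\K_1$ \emph{exactly} under the characteristic hypothesis: a coincidence $k(k+2)\equiv k'(k'+2)$ with $k<k'$ forces $p\mid (k'-k)(k'+k+2)$, hence $p\mid k'+k+2\le 2n-1$, which happens only in the window $n<p<2n$ — and that is precisely the window in which the statement also demands $y^n=0$, this relation being what lets one split the two symmetric powers sharing a value of $c$ by a finer argument inside the corresponding generalized $c$-eigenspace.

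\textbf{From the identity to the structure.} With the roots of $p_n$ distinct in $\K_1$, the Lagrange idempotents for $p_n$ — whose coefficients only involve the integers $k'(k'+2)-k(k+2)$, units in $\K_1$ — split $V = \bigoplus_{k=0}^{n-1}V_k$ into $\fg_1$-submodules on which $c$ acts as $k(k+2)$. On $V_0$ one has $4yx+h^2+2h=0$; a ladder argument combining this with $x^n=0$, torsion-freeness and the $\sl_2$-relations (using $[h,y]=-2y$ to kill $y$ once $h$ is killed) forces $x=y=h=0$, so $V_0 = \Ann_V(\fg_1)$, and in particular $V = \Ann_V(\fg_1)\oplus\fg_1\cdot V$ with $\fg_1\cdot V = \bigoplus_{k=1}^{n-1}V_k$. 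Fix $k\ge 1$: there $c$ is the scalar $k(k+2)$, so $4yx = (k-h)(k+h+2)$ and $4xy = (k+2-h)(k+h)$; ladder computations with $x,y$ and these relations give $x^{k+1}=y^{k+1}=0$ and $\prod_j(h-j)=0$ over the integers $|j|\le k$ with $j\equiv k\pmod 2$, the roots being distinct in $\K_1$, whence the weight decomposition $V_k = \bigoplus_j V_k^{(j)}$. On the weight spaces $yx$ and $xy$ act as the nonzero integers $\tfrac{k-j}{2}\cdot\tfrac{k+j+2}{2}$ times the identity, so $x$ and $y$ are, up to these units, mutually inverse bijections between consecutive $V_k^{(j)}$; thus $V_k$ is freely built from its top weight space $T_k = \ker x\cap V_k = V_k^{(k)}$, and a choice of $\K_1$-basis of $T_k$ yields $V_k\simeq\oplus_{I_k}\Sym^k\Nat\fg_1$. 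Finally, the $\K_1$-vector-space structure on $\fg_1\cdot V$ is automatic when $\K_1=\F_p$, and when $\K_1=\Q$ it holds because $V$ is then an $\sl_2(\Q)$-module, so every $x\cdot w = m\cdot\bigl((m^{-1}x)\cdot w\bigr)$ is divisible, hence $\fg_1\cdot V = \im x + \im y + \im h$ is a torsion-free divisible abelian group, i.e.\ a $\Q$-vector space.

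\textbf{Main obstacle.} The one genuinely hard point is the identity $p_n(c) = 0$: it is an honest computation in the group ring, with neither linear algebra nor semisimplicity available, and its positive-characteristic refinement — pinning down why the window $n<p<2n$ is exceptional and how $y^n=0$ is used there to pry apart the colliding symmetric powers — is where the real care lies. Everything downstream (the splitting, the identification of the trivial part, the weight decomposition, the reconstruction of each $\Sym^k$) is bookkeeping of the flavour displayed above for $n=2$.
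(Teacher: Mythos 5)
This statement is not proved in the paper at all: it is quoted as a Fact from \cite{TV-II} (Variations n$^\circ$17 and n$^\circ$18), and the paper says explicitly that these results ``are just mentioned and will not be used.'' So there is no internal proof to measure you against; the only hint the paper gives is the remark that the Lie-ring skeleton is rigid ``arguably because of the Casimir element,'' so your Casimir-centred strategy is at least aimed in the same direction as the cited source. What can be judged is whether your sketch is a proof, and it is not yet.

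Two genuine gaps. First, the entire weight of the argument rests on the identity $p_n(c)=0$ with $p_n(t)=\prod_{k=0}^{n-1}\bigl(t-k(k+2)\bigr)$, and you verify it only for $n=2$; for general $n$ you assert that expanding $x^n y^j=0$ ``after enough cancellation'' produces it. That assertion is the theorem's hard core, not an argument, and the exceptional window $n<p<2n$ -- where two symmetric powers share a Casimir eigenvalue and the extra hypothesis $y^n=0$ must intervene -- is dispatched with the phrase ``a finer argument,'' i.e.\ not at all. Second, the splitting step is not legitimate as stated: $V$ is merely an abelian group with a $\fg_1$-action (torsion-free in characteristic $0$, and in characteristic $p$ nothing forces $pV=0$), so the integers $k'(k'+2)-k(k+2)$, although units of $\K_1$, need not act invertibly on $V$, and the Lagrange idempotents in $c$ cannot be applied to all of $V$. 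Only $\fg_1\cdot V$ is a priori a $\K_1$-vector space (it is killed by $p$, respectively divisible and torsion-free), so the decomposition must be organised the other way around: first separate $\Ann_V(\fg_1)$ from $\fg_1\cdot V$ -- and your identification of the Casimir-kernel piece with $\Ann_V(\fg_1)$ is itself only an unproven ``ladder argument'' -- and only then split $\fg_1\cdot V$ by eigenvalues of $c$ and of $h$. The downstream bookkeeping (weight decomposition of each $V_k$, reconstruction from the top weight space, the vector-space structure on $\fg_1\cdot V$) is fine, but it is the two points above that carry the statement.
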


\begin{fact*}[{\cite[Variation n$^\circ$19]{TV-II}.}]
Let $n \geq 2$ be an integer and $\K$ be a field of characteristic $0$ or $\geq n$.
Let $\fg = \sl_2(\K)$ viewed as a Lie ring and $V$ be a $\fg$-module. Let $\K_1$ be the prime subfield of $\K$ and $\fg_1 = \sl_2(\K_1)$. Suppose that $V$ is a $\K_1$-vector space such that $V \simeq \oplus_I \Sym^{n-1} \Nat \fg_1$ as $\K_1\fg_1$-modules.

Then $V$ bears a compatible $\K$-vector space structure for which one has $V \simeq \oplus_J \Sym^{n-1} \Nat \fg$ as $\K\fg$-modules.
\end{fact*}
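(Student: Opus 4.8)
The plan is to reconstruct a $\K$-vector space structure on $V$ from the operators by which the $\K$-multiples of a standard triple act, and then to recognise the result by bare-handed $\sl_2$-theory over $\K$. Fix the basis $e,h,f$ of $\sl_2$ with $[h,e]=2e$, $[h,f]=-2f$, $[e,f]=h$, and for $\lambda\in\K$ let $E_\lambda,H_\lambda,F_\lambda\in\End V$ denote the action of $\lambda e,\lambda h,\lambda f$. As the module operation is additive and $\sl_2(\K)$ is a Lie \emph{ring}, each of $\lambda\mapsto E_\lambda$, $\lambda\mapsto H_\lambda$, $\lambda\mapsto F_\lambda$ is additive, the members of a single family commute, and one has $[E_\lambda,F_\mu]=H_{\lambda\mu}$, $[H_\lambda,E_\mu]=2E_{\lambda\mu}$ and $[H_\lambda,F_\mu]=-2F_{\lambda\mu}$ in $\End V$; these relations are all the argument will use. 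Since $h\in\fg_1$ and $V\simeq\oplus_I\Sym^{n-1}\Nat\fg_1$ as $\K_1\fg_1$-modules, $H_1$ is diagonalisable with eigenvalues exactly $n-1,n-3,\dots,-(n-1)$, pairwise distinct in $\K$ by the characteristic hypothesis (when $n=2$ in characteristic $2$ one runs the argument below directly on $\ker E_1$ instead). This gives $V=\bigoplus_k V_{n-1-2k}$ into $\K_1$-subspaces, and the same isomorphism shows that $E_1$ kills $V_{n-1}$, that $E_1$ is injective on every $V_j$ with $j<n-1$, that $F_1$ kills the bottom space $V_{-(n-1)}$, and that $F_1^k$ restricts to a $\K_1$-isomorphism $V_{n-1}\to V_{n-1-2k}$ for $0\le k\le n-1$ (here the characteristic hypothesis is precisely what makes $(n-1)!$ invertible).

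First I would install the scalar action on $V_{n-1}$. For $v\in V_{n-1}$ put $\lambda\star v:=\frac1{n-1}H_\lambda v$, which is legitimate since $n-1$ is invertible in $\K_1$; it is biadditive, fixes each $v$ when $\lambda=1$, and restricts to the given $\K_1$-multiplication because $H_\mu=\mu H_1$ on a $\K_1$-module when $\mu\in\K_1$. The only delicate point is multiplicativity, and it drops out of two short computations. For $v\in V_{n-1}$ one has $E_1(F_\lambda v)=[E_1,F_\lambda]v=H_\lambda v=(n-1)(\lambda\star v)=E_1\bigl(F_1(\lambda\star v)\bigr)$, so $F_\lambda v=F_1(\lambda\star v)$ by injectivity of $E_1$ on $V_{n-3}$; plugging this into
\[H_{\lambda\mu}v=[E_\lambda,F_\mu]v=E_\lambda F_\mu v=E_\lambda\bigl(F_1(\mu\star v)\bigr)=H_\lambda(\mu\star v)=(n-1)\bigl(\lambda\star(\mu\star v)\bigr)\]
and comparing with $(\lambda\mu)\star v=\frac1{n-1}H_{\lambda\mu}v$ yields $\lambda\star(\mu\star v)=(\lambda\mu)\star v$. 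Thus $(V_{n-1},\star)$ is a $\K$-vector space extending its $\K_1$-structure. I then transport it: for $v\in V_{n-1}$ declare $\mu\cdot F_1^k v:=F_1^k(\mu\star v)$, which is well defined because $F_1^k$ is a bijection $V_{n-1}\to V_{n-1-2k}$, and take the direct sum over $k$ to obtain a $\K$-vector space structure on $V$ refining the original $\K_1$-structure.

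Next I would verify that $\fg=\sl_2(\K)$ acts $\K$-linearly; as the action of $\lambda e+\mu h+\nu f$ is $E_\lambda+H_\mu+F_\nu$, it suffices to see that each $E_\lambda$, $H_\lambda$, $F_\lambda$ is $\K$-linear. This is where the remaining (still modest) computation sits: by short inductions on $k$, using the mixed relations together with $F_\lambda v=F_1(\lambda\star v)$, one obtains for $v\in V_{n-1}$ the closed forms $F_\lambda(F_1^k v)=F_1^{k+1}(\lambda\star v)$, $H_\lambda(F_1^k v)=(n-1-2k)F_1^k(\lambda\star v)$ and $E_\lambda(F_1^k v)=k(n-k)F_1^{k-1}(\lambda\star v)$, where the integers $k(n-k)$ with $1\le k\le n-1$ are invertible in $\K$; since $\mu\cdot F_1^k v=F_1^k(\mu\star v)$ and $\star$ is commutative, these formulas exhibit each operator as $\K$-linear on every weight space. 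In particular $V_{n-1}$, being annihilated by $E_1$ and of $H_1$-weight $n-1$, consists of highest-weight vectors, and $F_1^n v=0$ for $v\in V_{n-1}$ since $F_1$ kills the bottom weight space.

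Finally I would identify $V$. For $0\neq v\in V_{n-1}$ the $\K$-subspace $M_v:=\bigoplus_{k=0}^{n-1}\K F_1^k v$ is $\fg$-stable by the displayed formulas and, via $v\mapsto$ a highest-weight vector, is $\K\fg$-isomorphic to $\Sym^{n-1}\Nat\fg$, since the structure constants $E_1 F_1^k v=k(n-k)F_1^{k-1}v$, $H_1F_1^k v=(n-1-2k)F_1^k v$ and $F_1^n v=0$ are exactly those of the monomial basis of $\Sym^{n-1}$ and all scalars in play are invertible. Because $F_1$ maps $V_{n-1}$ onto $V_{n-3}$, then $V_{n-3}$ onto $V_{n-5}$, and so forth, the $M_v$ together span $V$; fixing a $\K$-basis $(v_\alpha)_{\alpha\in J}$ of $V_{n-1}$ and projecting a putative relation $\sum_\alpha m_\alpha=0$ with $m_\alpha\in M_{v_\alpha}$ onto the weight spaces --- on which $F_1^k$ is injective, so that $(F_1^k v_\alpha)_\alpha$ stays $\K$-independent --- shows $\sum_\alpha M_{v_\alpha}$ is direct. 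Hence $V\simeq\bigoplus_{\alpha\in J}\Sym^{n-1}\Nat\fg$ as $\K\fg$-modules, with the constructed $\K$-structure compatible with the given one over $\K_1$. The genuinely delicate step is the multiplicativity of $\star$ in the second paragraph; once the identity $F_\lambda v=F_1(\lambda\star v)$ is in hand it is immediate, so in practice the main effort is the slightly lengthy but entirely mechanical check of $\K$-linearity in the third paragraph.
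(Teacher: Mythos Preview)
The paper does not prove this statement: it is quoted from \cite{TV-II} purely as background and is explicitly flagged (``These results are just mentioned and will not be used''), so there is no proof here to compare against directly.

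That said, your argument is correct and is the natural Lie-ring analogue of the paper's proof of Theorem~\ref{t:Symn-1NatSL2K} (the group case). Where the paper defines the $\K$-action on $\check Z_k$ through a Timmesfeld-type equation $t_\lambda\cdot\zeta_k(a_1)=\frac{(-1)^{n-1}}{n-1}\zeta_k x_{\lambda^{n+1-2k}}\zeta_2(a_1)$ assembled from toral and unipotent group elements, you define it by $\lambda\star v=\frac1{n-1}H_\lambda v$ on the highest-weight space and transport along $F_1^k$. In both proofs multiplicativity is the only genuine issue and falls out of a short commutator identity, after which $\K$-linearity of the full action is a mechanical induction. Your version is cleaner precisely because $h$ is already an operator and need not be reconstructed from the Steinberg relations; correspondingly you need no radical-closure hypothesis on $\K$.

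One small gap to fill: when you pass from $[E_\lambda,F_\mu]v$ to $E_\lambda F_\mu v$ for $v\in V_{n-1}$, you are using $E_\lambda v=0$, not merely $E_1 v=0$. This is true but deserves a line: $E_\lambda$ commutes with $E_1$ and hence preserves $\ker E_1=V_{n-1}$, while it also shifts the $H_1$-weight by $2$; since $n+1\not\equiv n-1\pmod p$ for $p$ odd, the image lies in $V_{n-1}\cap V_{n+1}=0$. The $n=2$, characteristic-$2$ case (where $H_1$ is the identity and the weight decomposition collapses) should be handled as a genuine separate paragraph rather than a parenthetical aside.
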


These results are just mentioned and will not be used. Before we start 
we wish to thank: Antonin Guilloux and Maxime Wolff (see \S\ref{s:SL2:geometry}) on the one hand for their geometric help, and Alexandre Borovik and Gregory Cherlin on the other hand, who patiently endured earlier and even longer computations.

\section{Combinatorial Skeleton}\label{S:SL2:bones}

In this section we study $\SL_2(\Z)$-modules of short length. The main result is Theorem \ref{t:SL2:series} from the introduction, which we prove by a most brutal computation in \S\ref{s:SL2:computation}.
Allow us to insist that for us $\SL_2(\Z)$ is nothing but a pure group; 
we do not endow it with structure inherited from the algebraic group functor $\SL_2$, and must therefore do clumsy, ``pedestrian'' identification.

\begin{notation*}
Let $G_0 = \SL_2(\Z)$.\inmargin{$G_0$}
\end{notation*}

We let $\Nat \SL_2(\Z)$ stand for $\Z^2$ as the natural $\Z[\SL_2(\Z)]$-module, and we also let $\Sym^k \Nat \SL_2(\Z)$ stand for its $k\th$ symmetric power. Such modules do not have good divisibility properties, so we shall be interested in the tensored $\Q[\SL_2(\Z)]$-modules $\Q\otimes_\Z \Sym^k \Nat \SL_2(\Z)$.

\begin{notation*}
Let \inmargin{$\Sym_\Q^k$} $\Sym_\Q^k \Nat G_0 = \Q\otimes_\Z \Sym^k \Nat \SL_2(\Z)$.
\end{notation*}

Hence $\Sym_\Q^k \Nat G_0$ is the $(k+1)$-dimensional space spanned by $X^k$, $X^{k-1}Y$, \dots, $Y^k$ over $\Q$ and endowed with the usual action of $\SL_2(\Z) \leq \SL_2(\Q)$ on polynomials.

\S\ref{s:SL2:criterion} yields a trivial criterion used in the highly computational \S\ref{s:SL2:computation}. \S\ref{s:SL2:geometry} is a meditation on the geometric contents of the latter, a meditation entirely due to Maxime Wolff. And since we reach a dead-end, further questions we mentioned in the introduction are suggested in \S\ref{s:fenetre}.

\subsection{Notations and Criteria}\label{s:SL2:criterion}

Criterion \ref{l:SL2:criterion2} below will be used systematically in \S\ref{s:SL2:computation} to prove Theorem \ref{t:SL2:series}. We need a few notations.

\begin{notation*}
Let $u = \begin{pmatrix}1 & 1 \\ 0 & 1\end{pmatrix}$ \inmargin{$u$}
 and $w = \begin{pmatrix}0 & 1 \\ -1 & 0 \end{pmatrix}$.\inmargin{$w$}
\end{notation*}

We know that $i = w^2$ generates $Z(G_0)$.\inmargin{$i$}

\begin{relations*}[Steinberg relations]
$(uw)^3 = 1$.
\end{relations*}

The length $\ell(V)$\inmargin{$\ell(V)$} of a $G_0$-module $V$ is the least (if any) $k$ with $(u-1)^k \cdot V = 0$.

\begin{notation*}
If $V$ is $\ell(V)!$-divisible and $\ell(V)!$-torsion-free, let $x = \log u \in \End V$.
\end{notation*}

\begin{lemma}\label{l:SL2:criterion1}
Let $n\geq 2$ be an integer and $V$ be a $\Q[\SL_2(\Z)]$-module of length $\leq n$. Suppose that for all $k = 1\dots n$ one has in $\End V$:
\[\frac{1}{(n-k)!}wx^{n-k}wx^{n-1} = \frac{(-1)^{n-k}}{(k-1)!} x^{k-1}wx^{n-1}\]

Then $V$ has a $\Q[\SL_2(\Z)]$-submodule $V_\top$ such that $V/V_\top$ has length $\leq n-1$, and $V_\top \simeq \bigoplus_I \Sym_\Q^{n-1} \Nat \SL_2(\Z)$ as $\Q[\SL_2(\Z)]$-modules.
\end{lemma}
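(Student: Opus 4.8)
The statement is a criterion manufacturing the "top layer" $V_\top \cong \bigoplus_I \Sym_\Q^{n-1}\Nat\SL_2(\Z)$ inside a module $V$ of length $\leq n$, under a family of $n$ identities relating $wx^{n-k}wx^{n-1}$ to $x^{k-1}wx^{n-1}$. The plan is to single out the submodule generated by the image of $x^{n-1}$ (or, dually, the kernel of $x^{n-1}$ as the complement that pushes length down by one), to put a $\Q$-basis-like frame on it indexed by $0,\dots,n-1$ using the operators $x$ and $w$, and to check that the $\SL_2(\Z)$-action on that frame is forced to be exactly the action on homogeneous polynomials of degree $n-1$. Concretely, I would set $V_\top := (u-1)^{n-1}\cdot V = x^{n-1}\cdot V$ (using that $V$ is a $\Q$-module so $x=\log u$ makes sense and $x^{n-1}$ and $(u-1)^{n-1}$ have the same image), and more precisely work with the subspace spanned by the vectors $x^j w x^{n-1} v$ for $v\in V$ and $0\le j\le n-1$; the hypotheses are exactly what is needed to see this family is closed under the whole group.

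**Key steps in order.** First, I would record the basic structure: since $V$ has length $\le n$, $x^n=0$, so $x^{n-1}V$ is annihilated by $x$, i.e. it is a sum of $U$-fixed vectors; this makes $V_\top$ a candidate "highest weight" space. Second, I would show $V_\top$, suitably defined, is a $G_0$-submodule: it is visibly $U$-invariant (one needs $x\cdot V_\top\subseteq V_\top$, immediate) and closed under the torus when present, so the only real content is closure under $w$ — and that is precisely the role of the hypothesis, which expresses $wx^{n-k}w x^{n-1}$ back in terms of the $x^{j}wx^{n-1}$. Third, fixing $v\in V$, consider the $n$ vectors $e_j := \tfrac{1}{j!}x^{j} w x^{n-1} v$ for $j=0,\dots,n-1$; compute the action of $u$ (shift, since $x e_j = (j{+}1)e_{j+1}$), and the action of $w$ on each $e_j$ using the hypothesis with $k=n-j$, obtaining $w e_j = \pm e_{n-1-j}$ with the right signs — these are exactly the Steinberg-relation-compatible formulas for $\Sym^{n-1}\Nat$ written in the basis $\{X^{n-1-j}Y^{j}\}$ (up to normalisation). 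Fourth, I would verify the map sending $X^{n-1-j}Y^{j}\mapsto e_j$ intertwines the actions; since $u$ and $w$ generate $\SL_2(\Z)$, checking these two suffices. Fifth, letting $v$ range over $V$ gives a surjection $\bigoplus_I \Sym_\Q^{n-1}\Nat G_0 \twoheadrightarrow V_\top$; because $\Sym_\Q^{n-1}\Nat$ is irreducible (characteristic $0$), any such surjection splits as a direct sum, giving the claimed isomorphism after choosing $I$ to index a basis of the relevant image. Finally, for the quotient: $V/V_\top$ is killed by $x^{n-1}$ once we check $x^{n-1}V\subseteq V_\top$, which holds because $x^{n-1}V$ lies in the span of the $e_0$-type vectors (the $j=0$ layer) — hence $\ell(V/V_\top)\le n-1$.

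**Main obstacle.** The delicate point is not the formal bookkeeping but getting the \emph{right} submodule and the \emph{right} normalising constants so that the hypothesis identities match the group action on polynomials on the nose. In particular one must be careful that $V_\top$ is genuinely a submodule (the hypothesis only controls $w x^{n-k} w x^{n-1}$ — one should check no further relation, e.g. from $(uw)^3=1$, is needed, or rather that it follows) and that the assignment $e_j\mapsto \tfrac{1}{j!}x^{j}wx^{n-1}v$ is well-defined independently of choices, so that one really gets a module homomorphism from a direct sum of copies of $\Sym_\Q^{n-1}\Nat$ rather than just a spanning set. I expect the verification that $w$ acts correctly — i.e. that $w e_j = (-1)^{?}\,e_{n-1-j}$ with signs consistent with $w^2 = i$ acting as $(-1)^{n-1}$ and with $(uw)^3=1$ — to be where the hypothesis is used in full, and this is the step I would write out most carefully; everything else is routine once that matrix identity is in place. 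A secondary nuisance is handling $\Q$-divisibility cleanly when passing between $u-1$ and $x=\log u$, but the hypotheses of the lemma ($\Q$-module, length $\le n$) make this harmless.
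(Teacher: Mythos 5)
Your proposal is correct in substance, but it assembles the result differently from the paper. Both arguments revolve around the same operators: you set $e_j=\frac{1}{j!}x^{j}wx^{n-1}v$ and use the hypothesis (with $k=n-j$) to get $we_j=(-1)^{j}e_{n-1-j}$, while $xe_j=(j+1)e_{j+1}$ gives the shift; these are exactly the words the paper uses. The difference is in the packaging: the paper defines the explicit orthogonal idempotents $\pi_k=\frac{1}{((n-1)!)^2}x^{n-k}wx^{n-1}wx^{k-1}$, shows via the hypothesis that they are orthogonal idempotents, and takes $V_\top=\bigoplus_k\im\pi_k=\langle G_0\cdot\im\pi_1\rangle_\Q$, so the direct-sum decomposition comes out of the computation itself; you instead take the span of the $x^{j}wx^{n-1}v$, map $\Sym_\Q^{n-1}\Nat$ onto each cyclic piece, and invoke irreducibility of $\Sym_\Q^{n-1}\Nat$ plus semisimplicity of a sum of simples to get $\bigoplus_I\Sym_\Q^{n-1}\Nat$. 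Your route is shorter but imports an external fact the paper's computation avoids: that $\Sym_\Q^{n-1}\Nat$ remains irreducible as a $\Q[\SL_2(\Z)]$-module (true, e.g. because a $\Q$-subspace invariant under $x$ and $wxw^{-1}$ is $\sl_2(\Q)$-invariant, or by Zariski density), and you should say so explicitly. Three small repairs: the opening identification $V_\top:=x^{n-1}V$ is not justified (that subspace need not be $G_0$-invariant); the operative definition must be, as you later say, the span of the $x^{j}wx^{n-1}v$, i.e. $\langle G_0\cdot x^{n-1}V\rangle_\Q$. The inclusion $x^{n-1}V\subseteq V_\top$ does not hold because $x^{n-1}v$ is an ``$e_0$-type vector'' (it is not; $wx^{n-1}v$ is), but it does follow by applying $w^{-1}$ and $G_0$-invariance of $V_\top$, which is all you need for $\ell(V/V_\top)\le n-1$. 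Finally, the sign check you defer does close: in $\Sym^{n-1}\Nat$ the basis $f_j=\binom{n-1}{j}E_1^{j}E_2^{n-1-j}$ satisfies $xf_j=(j+1)f_{j+1}$ and $wf_j=(-1)^{j}f_{n-1-j}$, matching your frame, and equivariance for the generators $u,w$ suffices since both sides are genuine $\SL_2(\Z)$-modules (no extra verification of $(uw)^3=1$ is needed).
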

\begin{proof}[Proof sketch]
For $k = 1 \dots n$, the maps $\pi_k = \frac{1}{((n-1)!)^2} x^{n-k} w x^{n-1} w x^{k-1}$ are orthogonal idempotents; $V_\top = \oplus \im \pi_k$ is a $\Q[G_0]$-submodule, and $V_\top = \<G_0\cdot \im \pi_1\>_\Q$.
\end{proof}

\begin{remark*}
Here is a dual statement: if $V$ has length $\leq n$ and in $\End V$ holds $\frac{1}{(n-k)!}x^{n-1}wx^{n-k}w = \frac{(-1)^{n-k}}{(k-1)!} x^{n-1}wx^{k-1}$, then $V$ has a submodule $V_\bot$ of length $\leq n$ such that the quotient $V/V_\bot$ is isomorphic, etc.

Note that under the assumptions of Criterion \ref{l:SL2:criterion1} one can define the subgroup $V_\bot$ as $\cap_{k = 1}^n \ker \pi_k$, and that one does have $\im \left( 1 - \sum_{k = 1}^n \pi_k\right) \leq V_\bot$. But it is not clear whether $V_\bot$ is $G_0$-invariant. Our ``dual'' assumption forces this as a simple computation shows.

One could also argue by duality. In general, if $V$ is a $\Q[\SL_2(\Z)]$-module of finite length then so is the dual space $V^\ast$, and the following holds. 
Let $b$ be a word in $x$ and $w$ and $d$ be the word written in reverse order; let $(v, f) \in V \times V^\ast$. Then $(b\cdot f)(v) = (-1)^r \cdot f(i^s d\cdot v)$ where $i$ is the central involution, and the integers $r$ and $s$ are easily computed from $b$.

Here, one can check that if $V$ satisfies the dual assumption $\frac{1}{(n-k)!}x^{n-1}wx^{n-k}w = \frac{(-1)^{n-k}}{(k-1)!} x^{n-1}wx^{k-1}$, then the dual module $V^\ast$ satisfies the assumptions of Criterion \ref{l:SL2:criterion1}. Hence $V^\ast$ has a submodule $V_\top^\ast$ with the desired properties. One then sets $V_\bot = (V_\top^\ast)^\perp = \{v \in V: \forall \varphi \in V_\top^\ast, \varphi(v) = 0\}$, which meets the requirements.
\end{remark*}

\begin{lemma}\label{l:SL2:criterion2}
Let $n\geq 2$ be an integer and $V$ be a $\Q[\SL_2(\Z)]$-module of length $\leq n$. Suppose that for all $k = 1\dots n$ one has in $\End V$:
\[\frac{1}{(n-k)!}wx^{n-k}wx^{n-1} = \frac{(-1)^{n-k}}{(k-1)!} x^{k-1}wx^{n-1}\]
Suppose further either $\ker x \cap \ker(x^{n-1}w) = 0$, or $V = \im x + \im(wx^{n-1})$.

Then $V \simeq \bigoplus_I \Sym_\Q^{n-1} \Nat \SL_2(\Z)$ as $\Q[\SL_2(\Z)]$-modules.
\end{lemma}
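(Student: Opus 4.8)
The plan is to build on Criterion~\ref{l:SL2:criterion1}, which already does the hard part. Its proof supplies orthogonal idempotents $\pi_k = \frac{1}{((n-1)!)^2}x^{n-k}wx^{n-1}wx^{k-1}$ ($k = 1, \dots, n$) in $\End V$, whose sum $e = \sum_{k=1}^{n}\pi_k$ is again an idempotent with image $V_\top \simeq \bigoplus_I \Sym_\Q^{n-1}\Nat\SL_2(\Z)$. Setting $V_\bot = \ker e = \bigcap_k \ker \pi_k$ one gets $V = V_\top \oplus V_\bot$ as $\Q$-vector spaces, so the whole statement reduces to proving $V_\bot = 0$ under either of the two extra hypotheses.

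I would then collect a handful of identities in $\End V$, all squeezed out of the Relation together with the facts that $i = w^2$ is central with $i^2 = 1$ and that $x^n = 0$ (since $\ell(V) \le n$). The case $k = n$ of the Relation reads $x^{n-1}wx^{n-1} = (n-1)!\, i\, x^{n-1}$, whence $x^{n-1}wx^{n-1}wx^{n-1} = ((n-1)!)^2 x^{n-1}$; and left-multiplying the case $k = j$ (for $2 \le j \le n$) by $x^{n-1}$ gives $x^{n-1}wx^{n-j}wx^{n-1} = 0$, since the right-hand side then carries a factor $x^{n+j-2} = 0$. Substituting these into the $\pi_k$ and dropping the powers $x^m$ with $m \ge n$, one should obtain: $\pi_1 x^{n-1} = x^{n-1}$ and $\pi_k x^{n-1} = 0$ for $k \ge 2$, hence $e x^{n-1} = x^{n-1}$, i.e.\ $\im x^{n-1} \subseteq V_\top$; likewise $x^{n-1}w\pi_1 = x^{n-1}w$ and $x^{n-1}w\pi_k = 0$ for $k \ge 2$, hence $x^{n-1}w e = x^{n-1}w$, i.e.\ $V_\bot \subseteq \ker(x^{n-1}w)$; and the shift relations $\pi_k x = x\pi_{k+1}$ for $1 \le k \le n-1$ together with $\pi_n x = 0$.

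That last batch is the crux: because $V_\bot = \bigcap_k \ker \pi_k$, the shift relations immediately give $x\, V_\bot \subseteq V_\bot$, and then both cases are short. If $\ker x \cap \ker(x^{n-1}w) = 0$ and $V_\bot \ne 0$: as $x^n = 0$, the operator $x$ restricts to a nilpotent endomorphism of $V_\bot$, so $V_\bot \cap \ker x \ne 0$; but $V_\bot \subseteq \ker(x^{n-1}w)$, so any nonzero element there would lie in $\ker x \cap \ker(x^{n-1}w) = 0$ -- absurd; hence $V_\bot = 0$. If instead $V = \im x + \im(wx^{n-1})$: since $\im x^{n-1} \subseteq V_\top$, the operator $x^{n-1}$, hence also $wx^{n-1}$, vanishes on $\bar V := V/V_\top$; reducing $V = \im x + \im(wx^{n-1})$ modulo $V_\top$ then yields $\bar V = x\bar V$, which iterated against $x^n = 0$ forces $\bar V = 0$, i.e.\ $V = V_\top$. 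In both cases $V \simeq \bigoplus_I \Sym_\Q^{n-1}\Nat\SL_2(\Z)$.

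The only genuine work is the block of identities in the second step; none is deep, but the bookkeeping of exponents and of the central element $i$ has to be done with care. The point I would watch is that $e x^{n-1} = x^{n-1}$ and $x^{n-1}we = x^{n-1}w$ really do follow from the \emph{one-sided} Relation assumed here rather than from its dual -- they do, precisely because the competing terms are killed by $x^n = 0$ -- and that the normalisation $x^{n-1}wx^{n-1}wx^{n-1} = ((n-1)!)^2 x^{n-1}$ comes out with no stray factor of $i$, thanks to $i^2 = 1$.
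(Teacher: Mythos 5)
Your proposal is correct and takes essentially the paper's route: reduce, via the orthogonal idempotents $\pi_k$ from Criterion~\ref{l:SL2:criterion1}, to showing $V = V_\top$, dispose of the case $V = \im x + \im(wx^{n-1})$ at once, and in the other case use $\ker x \cap \ker(x^{n-1}w) = 0$ to kill the complement of $V_\top$. The only cosmetic difference is that the paper runs a descending induction on the operators $q_k = ((n-1)!)^2\, x^k\left(\sum_{\ell=1}^n \pi_\ell - 1\right)$, whereas you show directly that $V_\bot = \bigcap_k \ker\pi_k$ is $x$-stable and annihilated by $x^{n-1}w$; both rest on the same identities, which you verify correctly (in particular $x^{n-1}wx^{n-1}wx^{n-1} = ((n-1)!)^2 x^{n-1}$ and the shift relations $\pi_k x = x\pi_{k+1}$, $\pi_n x = 0$).
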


\begin{proof}[Proof sketch]
%
%
In the notations of Criterion \ref{l:SL2:criterion1}, it suffices to see $V = V_\top$. This is clear if $V = \im x + \im(wx^{n-1})$; if $\ker x\cap \ker(x^{n-1}w) = 0$, let $q_k = ((n-1)!)^2 x^k \left( \sum_{\ell = 1}^n \pi_\ell - 1\right)$ and prove that for $k = n \dots 0$, $q_k = 0$, so $\sum \pi_k = 1$.
\end{proof}

\begin{remark*}
Since the equation in the assumption is not self-dual, one of the two arguments would not suffice to prove Criterion \ref{l:SL2:criterion2}.

Here is a dual statement: if in $\End V$ one has $\ker x \cap \ker(x^{n-1}w) = 0$ or $V = \im x + \im(wx^{n-1})$, and $\frac{1}{(n-k)!}x^{n-1}wx^{n-k}w = \frac{(-1)^{n-k}}{(k-1)!} x^{n-1}wx^{k-1}$, then we reach the same conclusion as in Criterion \ref{l:SL2:criterion2}.

This is because if $V$ is a $\Q[\SL_2(\Z)]$-module of finite length, then (setting $Z_k(W) = \ker (x^k)$ when acting on $W$): $V = \im x + \im(wx^{n-1})$ iff $Z_1(V^\ast)\cap w\cdot Z_{n-1}(V^\ast) = 0$, and $Z_1(V) \cap w \cdot Z_{n-1}(V) = 0$ iff $V^\ast = \im x + \im(wx^{n-1})$.
\end{remark*}

Of course there are similar statements for $\F_p[\SL_2(\F_p)]$-modules if $p > n$.

\subsection{The Long Computation}\label{s:SL2:computation}

The present \S\ref{s:SL2:computation} is dedicated to proving Theorem \ref{t:SL2:series} by means of a tedious computation and is compteley void of ideas. A reader not enjoying heavy calculations should skip it and jump to \S\ref{s:SL2:geometry}.

\begin{theorem}\label{t:SL2:series}
Let $V$ be a $\Q[\SL_2(\Z)]$-module. Suppose that for every unipotent element $u \in \SL_2(\Z)$, $(u-1)^5 = 0$ in $\End V$. Then $V$ has a composition series each factor of which is a direct sum of copies of $\Q\otimes_\Z \Sym^k \Nat \SL_2(\Z)$ for $k\in \{0, \dots, 4\}$.
\end{theorem}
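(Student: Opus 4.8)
The plan is to set up an induction on the length $\ell(V) \le 5$, peeling off the top layer $V_\top \simeq \bigoplus_I \Sym_\Q^4 \Nat \SL_2(\Z)$ by verifying the hypothesis of Criterion~\ref{l:SL2:criterion2} (or at least of Criterion~\ref{l:SL2:criterion1}) for $n = 5$, and then recursing on the quotient $V/V_\top$, whose length is $\le 4$. The base cases $\ell(V) \le 2$ are covered by Timmesfeld's Quadratic Theorem (applied to composition factors, since we only want a composition \emph{series}, not a splitting), and the inductive step for $n = 3, 4$ would be smaller instances of the same verification we carry out for $n = 5$. So the entire theorem reduces to the single assertion: \emph{every $\Q[\SL_2(\Z)]$-module $V$ with $(u-1)^n = 0$ for all unipotents $u$ satisfies the identity}
\[
\frac{1}{(n-k)!}\,w x^{n-k} w x^{n-1} = \frac{(-1)^{n-k}}{(k-1)!}\, x^{k-1} w x^{n-1}
\quad\text{in }\End V,\ \text{for }k = 1, \dots, n,
\]
\emph{at least for} $n \le 5$, and hence (after passing to a quotient where $V_\top$ has been removed) that the residual part has strictly smaller length.

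First I would reduce to working with a fixed unipotent root subgroup. Since $V$ is a $\Q$-vector space (after noting that the $n!$-divisibility and $n!$-torsion-freeness needed to form $x = \log u$ are automatic over $\Q$), the operator $x = \log u$ is a well-defined nilpotent endomorphism with $x^n = 0$; the opposite root element gives $y = \log u^-$, and conjugation by $w$ swaps the two root subgroups, so $w x w^{-1} = -y$ up to the central involution $i = w^2$ (which acts as $\pm 1$ on each composition factor and can be absorbed). The Steinberg relation $(uw)^3 = 1$ then becomes a polynomial identity relating $x$, $y$ and $w$; the whole computation lives in the subring of $\End V$ generated by $x$, $y$, $w$ modulo this relation together with $x^n = y^n = 0$. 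The claimed identity is then a purely formal consequence to be extracted by expanding $(uw)^3 = 1$, i.e. $u = w u^{-1} w^{-1} u^{-1} w^{-1}$, in terms of $x$ and $w$, truncating at degree $n-1$ in $x$ because $x^n = 0$.

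The main obstacle — and, as the authors candidly admit, the part that is "a complete mystery" — is precisely this formal extraction: one must show that the Steinberg relation, expanded and truncated, \emph{forces} the displayed family of identities. This is the "most brutal computation" of \S\ref{s:SL2:computation}; it does not follow from any soft argument I can see, and the honest thing to do is to carry it out by hand (or machine) for $n = 3, 4, 5$ separately, organised by isolating the coefficient of each monomial $w x^j w x^{n-1}$ on both sides after substituting the relation. Once the identity is in hand, Criterion~\ref{l:SL2:criterion1} produces $V_\top$ as the sum of the images of the orthogonal idempotents $\pi_k = \frac{1}{((n-1)!)^2} x^{n-k} w x^{n-1} w x^{k-1}$, one checks $V_\top \simeq \bigoplus_I \Sym_\Q^{n-1}\Nat$, and the quotient $V/V_\top$ has length $\le n-1$ so the induction continues; iterating down through $n = 5, 4, 3$ and finishing with the quadratic theorem at $n = 2$ (and the trivial module at $n = 1$) assembles the desired composition series with factors among $\Sym_\Q^k \Nat \SL_2(\Z)$, $k \in \{0, \dots, 4\}$.
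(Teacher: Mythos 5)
Your reduction of the whole theorem to the single assertion that the identity $\frac{1}{(n-k)!}wx^{n-k}wx^{n-1} = \frac{(-1)^{n-k}}{(k-1)!}x^{k-1}wx^{n-1}$ holds in $\End V$ for \emph{every} module with $(u-1)^n=0$, $n\le 5$, is exactly where the argument breaks: this identity is not a formal consequence of ``expanding $(uw)^3=1$ and truncating at $x^n=0$,'' and it is false in general. Indeed, if it held for every length-$4$ module with $i=1$, Criterion \ref{l:SL2:criterion1} would produce a submodule $V_\top$ isomorphic to a direct sum of copies of $\Sym_\Q^{3}\Nat\SL_2(\Z)$, on which the central involution acts by $-1$; hence $V_\top=0$ and $V$ would have length $\le 3$ — yet there are quartic modules with $i=1$ that are not cubic (they are only cubic-by-cubic; the relation $x^3w+wx^3+x^2wx+xwx^2=2x$ controls such non-split extensions). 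More generally, your identity would force the top $\Sym^{n-1}$-isotypic layer to split off as a submodule, which fails whenever the relevant extension is non-split. What the long computation actually establishes is weaker and more delicate: from the Steinberg relation one derives a single equation $(E_\pm)$ in $\End V$ (after first writing $V=C_V(i)\oplus[V,i]$ so that $i=\pm1$, a step you also need but do not make), and the Criterion \ref{l:SL2:criterion2} identities are then obtained only on a quotient of $V$ by a bounded number of radical-type submodules ($\Quad$, $\Cub$, or the ad hoc $\Quart'$, whose $G_0$-invariance itself requires proof); on top of that one must separately verify the nondegeneracy hypothesis of Criterion \ref{l:SL2:criterion2} ($Z_1^{n-1}=0$ in that quotient), which costs several further layers of stripping (the $V_3,V_4,V_5$ arguments in the $n=4,5$ cases). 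None of this structure is recoverable from the identity-in-$\End V$ claim on which your induction rests.

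Two smaller points. First, even granting your reduction, you explicitly defer the verification (``by hand or machine for $n=3,4,5$''), so the core content of the theorem is not actually proved in your text. Second, your base case invokes Timmesfeld's Quadratic Theorem ``applied to composition factors,'' which presupposes a composition series with simple factors; for possibly infinite-dimensional $\Q[\SL_2(\Z)]$-modules this is not available, and the quadratic case must instead be handled directly (for $i=1$ the module is trivial, for $i=-1$ one checks the hypotheses of Criterion \ref{l:SL2:criterion2}), producing finite series whose factors are semisimple in the stated sense rather than simple.
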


\begin{remarks*}\
\begin{itemize}
\item
If $(u-1)^3 = 0$ the series even splits: $V$ is a direct sum of submodules of the desired type. We shall check it in due time.
%
%
\item
Powers $k$ in Theorem \ref{t:SL2:series} may appear with repetitions. 
We do not even know whether terms can be rearranged in non-decreasing power order.
\item
We shall not use all of the $\Q$-vector space structure during our computations. A $\Z_{\frac{1}{n!}}$-module is enough to derive our formulas. In particular, Theorem \ref{t:SL2:series} has an analogue for $\F_p[\SL_2(\F_p)]$-modules ($p > 5$) -- which we suspect could also be obtained with much less effort.
\end{itemize}
\end{remarks*}

\begin{proof}[The proof of Theorem \ref{t:SL2:series} starts here]
Writing $V = C_V(i) \oplus [V, i]$, we may assume that $i = \pm 1$ in $\End(V)$.
We shall build the series inductively. For $V$ of length $\ell$ we construct a non-maximal series of submodules $0 = V_0 < \dots < V_m = V$ such that:
\begin{itemize}
\item
for $j < m$, $V_j/V_{j-1}$ has length $< \ell$,
\item
$V/V_{m-1}$ either has length $< \ell$, or satisfies the assumptions of Criterion \ref{l:SL2:criterion2} (depending on the value of the involution in $\End V$).
\end{itemize}

\subsubsection{Notations and Remarks}

In order to analyse modules we need to isolate a ``quadratic'' radical, a ``cubic'' radical, and so on. This requires a few notations.

\begin{localnotation*}
Let \inmargin{$Z_j(V)$, $Z_j^k(V)$} $Z_j(V) = \ker (u-1)^j$ and $Z_j^k(V) = Z_j(V) \cap w\cdot Z_k(V)$.
\end{localnotation*}

For instance $Z_1^1(V) = \ker(u-1) \cap \ker((u-1)w) = C_V(u, wuw^{-1}) = C_V(G_0)$.
We have let $x = \log (1+(u-1))$, so that $u = \exp(x)$.
Clearly
 $Z_k (V) = \ker (x^k)$.

\begin{localnotation*}
Let\inmargin{$\Quad, \Cub$}
$\Quad(V) = Z_1^2(V) + Z_2^1(V)$ and 
$\Cub(V) = Z_1^3(V) + Z_2^2(V) + Z_3^1(V)$.
\end{localnotation*}

Nothing guarantees that the $\Q$-vector subspaces $\Quart(V)$ and $\Cub(V)$ are $\Q[G_0]$-submodules: we prove it as follows.

\begin{localnotation*}
Let \inmargin{$c$, $s$} $c = \cosh(x)$ and $s = \sinh(x)$, so that $u = c + s$ and $u^{-1} = c - s$.
\end{localnotation*}

\begin{localrelations*}
If $i = 1$ in $\End V$, then
$wcw  = cwc + sws$ and $wsw = - cws - swc$.

If on the other hand $i = -1$, then
$wcw = cws + swc$ and $wsw = - cwc - sws$.
\end{localrelations*}
\begin{proofclaim}
In $\End V$ one has by the Steinberg relations $uwu = (wuw)^{-1} = wu^{-1} w$ and $u^{-1}wu^{-1} = i(uwu)^{-1} = i wu w$.
\end{proofclaim}

\begin{localobservation*}
$\Quad(V)$ is always $G_0$-invariant; if $i = 1$ then so is $\Cub(V)$.
\end{localobservation*}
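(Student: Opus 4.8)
The plan is to show that $\Quad(V)$ and (when $i=1$) $\Cub(V)$ are preserved by the two generators $u$ and $w$ of $G_0$; since $u = \exp(x)$ and everything in sight is built from $x$ and $w$, it is enough to check $x$-invariance and $w$-invariance separately. First, $x$-invariance of the pieces $Z_j^k(V) = Z_j(V) \cap w\cdot Z_k(V)$: the subspace $Z_j(V) = \ker x^j$ is obviously $x$-stable, while $w\cdot Z_k(V) = w\cdot \ker x^k$ is $x$-stable because it equals $\ker(w x^k w^{-1})$ and one can rewrite $w x w^{-1}$ using the Relations above (conjugating $x$, equivalently $c$ and $s$, by $w$) as a polynomial expression that commutes appropriately; in any case $x$ maps $w\cdot\ker x^k$ into itself once one knows how $x$ and $w x w^{-1}$ interact. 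So each $Z_j^k(V)$, hence each of $\Quad(V)$ and $\Cub(V)$, is $x$-stable, i.e. $u$-stable.

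The substantive point is $w$-invariance. Since $w$ is an involution (up to the central $i$, and we have reduced to $i = \pm 1$), $w$ acts as a near-involution on the lattice of subspaces, and $w\cdot Z_j^k(V) = w Z_j(V) \cap w w Z_k(V) = w Z_j(V) \cap i Z_k(V) = Z_k^j(V)$ up to the harmless central sign. Thus $w$ simply swaps $Z_1^2(V) \leftrightarrow Z_2^1(V)$ and $Z_1^3(V)\leftrightarrow Z_3^1(V)$ while fixing $Z_2^2(V)$, so the sums $\Quad(V) = Z_1^2(V)+Z_2^1(V)$ and $\Cub(V) = Z_1^3(V)+Z_2^2(V)+Z_3^1(V)$ are visibly $w$-stable. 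Combined with $u$-invariance, this makes both subspaces $G_0$-submodules — \emph{provided} the swap $w\cdot Z_j(V) = Z_j(V)$ manipulation is legitimate, which is where the hypothesis $i=1$ enters for the cubic case: one must verify that the relevant identities (the Relations for $wcw$, $wsw$) really do let $w$ permute the defining subspaces rather than scramble them, and it is precisely in degree $3$ that the sign discrepancy when $i = -1$ obstructs this, so that $\Cub(V)$ need not be invariant then.

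Concretely I would argue: $\Quad(V)$ is $u$-invariant by the $x$-stability remark; it is $w$-invariant because $w$ interchanges its two summands; done, with no parity condition. For $\Cub(V)$ when $i = 1$, the same two observations apply, the only extra input being that with $i=1$ the Relations are the ``symmetric'' ones ($wcw = cwc+sws$, etc.) which are compatible with the grading by $x$-nilpotence degree in the way needed for $w$ to send $Z_j(V)$ to (a central twist of) itself; I would spell out the one identity that fails under $i=-1$ to explain why the hypothesis cannot be dropped. The main obstacle is bookkeeping: making the interaction between $w$-conjugation and the filtration $Z_1 \subseteq Z_2 \subseteq Z_3 \subseteq \cdots$ fully explicit, i.e. checking that $w x^k w^{-1}$ acts on each relevant subspace as expected — this is a short but fiddly computation with $c,s$ and the Relations, and it is the only place where anything can go wrong.
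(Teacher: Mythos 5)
There is a genuine gap, and it sits exactly where the substance of the observation lies. Your $u$-invariance argument rests on the claim that each individual piece $Z_j^k(V)$ is $x$-stable, because $w\cdot Z_k(V)=\ker(wx^kw^{-1})$ would be preserved by $x$. That is false: $x$ and $wxw^{-1}$ do not commute, and $w\cdot Z_k(V)$ is in general not $x$-stable. Already in $\Sym_\Q^1\Nat G_0$ the line $w\cdot Z_1(V)$ is sent by $x$ onto $Z_1(V)$, not into itself; and in $\Sym_\Q^2\Nat G_0$ the middle piece $Z_2^2(V)$ is the line spanned by $XY$, while $x\cdot XY$ is a nonzero element of $Z_1(V)$, so $Z_2^2(V)$ is not $x$-stable either. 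What is actually true --- and is the whole content of the statement --- is that $x$ \emph{shifts} the pieces inside the sum: $x\cdot Z_2^1(V)\leq Z_1^2(V)$, and (when $i=1$) $x\cdot Z_3^1(V)\leq Z_2^2(V)$ and $x\cdot Z_2^2(V)\leq Z_1^3(V)$. Each of these inclusions is proved by writing the element as $wb_1$ with $b_1\in Z_1(V)$ (or decomposing under $w$) and evaluating expressions such as $x^2wx\,a_2$ and $x^3wx\,a_2$ by means of the identities $wcw=cwc+sws$, $wsw=-cws-swc$ (and their signed variants when $i=-1$), which come from the Steinberg relation $uwu=wu^{-1}w$. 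This is precisely the ``short but fiddly computation'' you defer at the end; since your shortcut around it is based on a false stability claim, deferring it leaves the proof without its core.

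You also locate the hypothesis $i=1$ in the wrong place. The $w$-invariance of both $\Quad(V)$ and $\Cub(V)$ is unconditional: $w\cdot Z_j^k(V)=wZ_j(V)\cap iZ_k(V)=Z_k^j(V)$ because $i$ is central and hence preserves every $Z_k(V)$, whether $i=1$ or $i=-1$ in $\End V$; so the swap of summands needs no parity condition at all, and indeed $\Quad(V)$ is invariant in both cases (with two different computations for the $x$-part). Where $i=1$ genuinely enters is in the $x$-invariance of $\Cub(V)$: for $a_2\in Z_2^2(V)$ one decomposes under the action of $w$, which requires $w$ to be an involution on $V$ (i.e.\ $w^2=i=1$), writes $wa_2=\varepsilon a_2$, and derives $(1+\varepsilon c)xwxa_2=0$, whence $x^3wxa_2=0$ and $xa_2\in Z_1^3(V)$; with $i=-1$ this step is unavailable. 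So the correct architecture is the reverse of yours: the $w$-part is trivial and sign-free, and the relations together with the value of $i$ do their work in the $u$-part.
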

\begin{proofclaim}
First suppose $i = -1$. Let us show that $\Quad(V)$ is $G_0$-invariant. Its $w$-invariance is obvious (and will no longer be mentioned in similar arguments). Clearly $x$ maps $Z_1^2(V)$ to $\Quad(V)$. Finally if $a_2 = w b_1 \in Z_2^1(V)$, then:
\begin{align*}
x^2 wx a_2 & = - x^2 wsww a_2 = x^2 cwc wa_2 + x^2 sws w a_2 = x^2 c ww a_2 = 0
\end{align*}
so $x a_2 \in Z_1^2(V) \leq \Quad(V)$, and this shows that $x$ maps $\Quad(V)$ to itself: the latter is therefore $\<u, w\> = G_0$-invariant.

We now suppose $i = 1$. To prove $G_0$-invariance of $\Quad(V)$ we argue similarly and take $a_2 \in Z_2^1(V)$:
\[x^2 wx a_2 = x^2 w sww a_2 = - x^2 cws wa_2 - x^2 swc wa_2 = - x^2 s ww a_2 = 0\]
which shows $G_0$-invariance of $\Quad(V)$.

To prove $G_0$-invariance of $\Cub(V)$ (still assuming $i = 1$ in $\End V$) there are two non-trivial verifications. First let $a_3 \in Z_3^1(V)$. Then:
\[x^2 w x a_3 = x^2 ws ww a_3 = - x^2 cwswa_3 - x^2 swc wa_3 = - x^2 s a_3 = 0\]
so $x a_3 \in Z_2^2(V) \leq \Cub(V)$. Now let $a_2 \in Z_2^2(V)$. Decomposing under the action of the involution $w$, we may assume that $w a_2 = \pm a_2$, say $w a_2 = \varepsilon a_2$. Hence:
\[xwx a_2 = \varepsilon xws w a_2 = - \varepsilon xcws a_2 - \varepsilon xswc a_2 = - \varepsilon xcwxa_2 - \varepsilon xsw a_2 = - \varepsilon xcwxa_2\]
So $(1 + \varepsilon c) x wx a_2 = 0$. In any case $x^3 wxa_2 = 0$, so $xa_2 \in Z_1^3(V)$ as desired.
\end{proofclaim}

\subsubsection{General Formula}


\begin{localrelations*}
If $i = 1$ in $\End(V)$, then:
\begin{align*}\label{e:+}
0 & = - 3 s - 3ws - 3sw + 3cws + 3 swc + \frac{1}{2} x^3 wu + \frac{1}{2} u^{-1}wx^3 +  \frac{1}{2} uwx^3w\\
& \quad  + \frac{1}{2} wx^3wu^{-1} + \frac{1}{4} x^4wu - \frac{1}{4}u^{-1}wx^4 + \frac{1}{4} uwx^4w - \frac{1}{4} wx^4wu^{-1}\tag{$E_+$}
\end{align*}
If on the other hand $i = -1$, then:
\begin{align*}\label{e:-}
0 & = 3cws + 3swc +3cw - 3wc + 3c + \frac{1}{2}x^3wu + \frac{1}{2}uwx^3w + \frac{1}{2}u^{-1}wx^3 \\
& \quad - \frac{1}{2}wx^3wu^{-1} + \frac{1}{4} x^4wu + \frac{1}{4}wx^4wu^{-1} - \frac{1}{4} u^{-1}wx^4 + \frac{1}{4}uwx^4w\tag{$E_-$}
\end{align*}
\end{localrelations*}
\begin{proofclaim}
Since the length is at most $5$, one sees that:
\[
c^2 = 2 c - 1 + \frac{1}{4}x^4; \qquad 
s^2 = 2 c - 2 + \frac{1}{4} x^4; \qquad
cs = s + \frac{1}{2}x^3
\]

First suppose $i = 1$ and get ready for a long computation.
\begin{align*}
0 & = \left(uwu-wu^{-1}w\right)cw\\
& = uw \left(c^2 + cs\right) w - wu^{-1} cwc - wu^{-1} sws\\
& = uwc^2 w + uwcsw - wc^2wc + wcswc - wcsws + ws^2 ws\\
& = u + uws^2 w + uwcsw + wcs wu^{-1} - c - ws^2 wc + ws^2 ws\\
& = s + uws^2 w - ws^2 wu^{-1} + uwcsw + wcswu^{-1}\\
& = s + uw \left(2c-2+\frac{1}{4}x^4\right) w - w\left(2c-2 + \frac{1}{4}x^4\right)wu^{-1} + uw\left(s+\frac{1}{2}x^3\right)w\\
& \quad + w\left(s+\frac{1}{2}x^3\right) wu^{-1}\\
& = s + 2ucwc + 2usws - 2u + \frac{1}{4} uwx^4w - 2cwcu^{-1} - 2 swsu^{-1} + 2u^{-1}\\
& \quad  - \frac{1}{4} wx^4 wu^{-1} - ucws - uswc + \frac{1}{2} uwx^3w - cwsu^{-1} - swcu^{-1} + \frac{1}{2} wx^3 wu^{-1}
\end{align*}
Set $R = \frac{1}{2} uwx^3w + \frac{1}{2} wx^3wu^{-1} + \frac{1}{4} uwx^4 w - \frac{1}{4}wx^4 wu^{-1}$ and resume.
\begin{align*}
0 & = s + 2 wc + 2s^2wc + 2cswc + 2csws + 2s^2ws - 2u - 2cw - 2cws^2 + 2cwcs\\
& \quad  - 2 swcs + 2sws^2 + 2u^{-1} - ws - s^2ws - csws - cswc - s^2 wc - cwcs\\
& \quad + cws^2 - sw - sws^2 + swcs + R\\
& = -3s + 2wc + s^2 wc + cswc + csws + s^2 ws - 2cw - cws^2 + cwcs - swcs\\
& \quad + sws^2 - ws - sw + R\\
& = -3s + 2wc + 2cwc - 2wc + \frac{1}{4}x^4 wc + swc + \frac{1}{2}x^3 wc + sws + \frac{1}{2}x^3 ws + 2cws\\
& \quad - 2ws + \frac{1}{4}x^4ws - 2cw
- 2cwc + 2cw - \frac{1}{4}cwx^4
+ cws + \frac{1}{2} cwx^3
- sws\\
& \quad - \frac{1}{2} swx^3
+ 2swc - 2sw + \frac{1}{4} swx^4
- ws - sw + R\\
& = - 3s + \frac{1}{4}x^4 wu + 3 swc + \frac{1}{2} x^3 wu + 3cws - 3ws - \frac{1}{4} u^{-1} wx^4 \\
& \quad  + \frac{1}{2}u^{-1}wx^3 - 3sw + R\\
& = - 3 s - 3ws - 3sw + 3cws + 3 swc + \frac{1}{2} x^3 wu + \frac{1}{2} u^{-1}wx^3 +  \frac{1}{2} uwx^3w\\
& \quad  + \frac{1}{2} wx^3wu^{-1} + \frac{1}{4} x^4wu - \frac{1}{4}u^{-1}wx^4 + \frac{1}{4} uwx^4w - \frac{1}{4} wx^4wu^{-1}
\end{align*}

If $i = -1$, there is a similar computation.
\end{proofclaim}

We now proceed by increasing complexity of the expected factors; let $n$ be the least integer such that $x^n = 0$ in $\End(V)$.

\subsubsection{Case $n = 2$, $i = 1$}

Suppose $i = 1$ in $\End V$ and $n = 2$, so that $c = 1$ and $s = x$.

The equation (\ref{e:+}) rewrites as $- 3 x = 0$, so $x = 0$; $V$ is clearly $G_0$-trivial.

\subsubsection{Case $n = 2$, $i = -1$}

Suppose $n = 2$ and $i = -1$ in $\End (V)$.

The equation (\ref{e:-}) rewrites as $0 = 3wx + 3xw + 3$, whence $xw+wx = -1$. Therefore $xwx = -x$; on the other hand $Z_1^1(V) = C_V(G_0) \leq C_V(w) = 0$ since $i$ inverts $V$.
The requirements of Criterion \ref{l:SL2:criterion2} are met: $V$ is therefore a direct sum of copies of $\Sym_\Q^1 \Nat \SL_2(\Z) = \Q \otimes_\Z \Nat \SL_2(\Z)$.


\subsubsection{Case $n = 3$, $i = -1$}

Suppose $i = -1$ in $\End(V)$ and $n = 3$, so that $c = 1 + \frac{1}{2}x^2$ and $s =x$.

The equation (\ref{e:-}) rewrites as:
\begin{equation*}\label{e:-3}
0 = 3wx + \frac{3}{2}x^2wx + 3 xw + \frac{3}{2}xwx^2 + \frac{3}{2}x^2w - \frac{3}{2} wx^2 + 3 + \frac{3}{2} x^2\tag{$E_{-3}$}
\end{equation*}
Multiply (\ref{e:-3}) on the left by $x^2$ and on the right by $x$: $3 x^2 wx^2 = 0$. Multiply (\ref{e:-3}) on the left and on the right by $x$: $3xwx^2 + 3x^2wx + 3x^2 = 0$. Finally multiply (\ref{e:-3}) on the left by $x^2$: $3x^2wx + 3x^2 = 0$. So there remains $xwx^2 = 0$, and therefore $\im(x^2) \leq Z_1^1(V) = C_V(G_0) \leq C_V(w) = 0$ since $i$ inverts $V$.

Hence $x^2 = 0$ in $\End(V)$. This case is known.

\subsubsection{Case $n = 3$, $i = 1$}\label{s:+3}

Suppose $n = 3$ and $i = 1$ in $\End (V)$.

The equation (\ref{e:+}) rewrites as $0 = - 3 x - 3 wx - 3 xw + 3wx + \frac{3}{2} x^2 wx + 3xw + \frac{3}{2} x wx^2 = -3 x + \frac{3}{2} x^2 wx  + \frac{3}{2} x wx^2$, or:
\begin{equation*}\label{e:+3}
x^2 wx + xwx^2 = 2x\tag{$E_{+3}$}
\end{equation*}

On the other hand:
\begin{equation*}\label{e:+3'}
wxwx^2 = wswx^2 = - cwsx^2 - swc x^2 = - xwx^2\tag{$E_{+3'}$}
\end{equation*}

\begin{localnotation*}
Let $V_\bot = \Quad(V)$.
\end{localnotation*}

\begin{step}
$V_\bot$ is $G_0$-trivial; $V/V_\bot$ is a direct sum of copies of $\Sym_\Q^2 \Nat G_0$.
\end{step}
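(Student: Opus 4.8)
The plan is to prove the two assertions separately, both resting on $(E_{+3})$ and on one cleaner consequence of it. First I would record the identity $wxw = -(wx+xw+x)$ in $\End V$: substitute $s = x$ and $c = 1+\tfrac12 x^2$ into the relation $wsw = -cws - swc$ (valid here since $i = 1$), and simplify the two terms $\tfrac12 x^2wx$ and $\tfrac12 xwx^2$ that appear using $(E_{+3})$. Together with $(E_{+3})$ and $(E_{+3'})$, this identity is the only input the rest of the argument will need.

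For the first assertion I would in fact establish $V_\bot = \Quad(V) = Z_1^1(V) = C_V(G_0)$, which certainly makes $V_\bot$ a trivial $G_0$-module. Since $w$ interchanges $Z_1^2(V)$ and $Z_2^1(V)$ and preserves $C_V(G_0)$, and since the inclusion $C_V(G_0) \subseteq \Quad(V)$ is automatic, it suffices to check $Z_2^1(V) \subseteq C_V(G_0)$, i.e. that $xa = 0$ for every $a \in Z_2^1(V)$. Writing $c = xa$: one has $xc = 0$ because $x^2a = 0$; applying the identity above to $a$ and using $wa \in Z_1(V)$ (so that $xwa = 0$ and $wxwa = 0$) yields $wc = -c$; and applying $(E_{+3})$ to $a$ yields $x^2wc = 2c$. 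The last two relations force $2c = x^2wc = -x^2c = 0$, hence $c = 0$.

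For the second assertion I would pass to $\bar V = V/V_\bot = V/C_V(G_0)$ — which has length $\leq 3$ and central involution $i = 1$ — and apply Criterion \ref{l:SL2:criterion2} with $n = 3$. Its three structural hypotheses already hold in $\End V$: the case $k = 2$ is exactly $(E_{+3'})$, while the cases $k = 3$ (namely $x^2wx^2 = 2x^2$) and $k = 1$ (namely $wx^2wx^2 = 2wx^2$) follow from $(E_{+3})$ upon left-multiplying by $x$, then by $w$. The substantive point — and the place I expect the only real work to be — is the remaining hypothesis $\ker x \cap \ker(x^2w) = 0$ in $\End \bar V$, equivalently $Z_1^2(\bar V) = 0$; this does not reduce formally to $Z_1^2(V) \subseteq V_\bot$ and must be argued with lifts. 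Given $\bar v$ with $x\bar v = 0 = x^2w\bar v$, lift it to $v \in V$ with $xv$ and $x^2wv$ both in $V_\bot = C_V(G_0)$. Then $x^2v = 0$, and feeding $v$ into $(E_{+3})$ gives $x^2w(xv) = 2xv$; since $xv \in C_V(G_0)$ one has $x^2w(xv) = x^3v = 0$, so $xv = 0$. Finally set $d = xwv$: applying the identity of the first paragraph to $v$ gives $wd = -d$; feeding $wv$ into $(E_{+3})$ and using $xd = x^2wv \in C_V(G_0)$ gives $x^2wd = 2d$, whence $d = 0$; thus $wv \in Z_1(V)$ and $v \in Z_1(V) \cap wZ_1(V) = C_V(G_0) = V_\bot$. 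Hence $Z_1^2(\bar V) = 0$, Criterion \ref{l:SL2:criterion2} applies, and $\bar V \simeq \bigoplus_I \Sym_\Q^2 \Nat G_0$, which is the claim.
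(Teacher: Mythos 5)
Your proof is correct, and it keeps the paper's global strategy: identify $V_\bot=\Quad(V)$ with $C_V(G_0)$, then apply Criterion~\ref{l:SL2:criterion2} with $n=3$ to $\bar V=V/V_\bot$, whose structural hypotheses ($x^2wx^2=2x^2$, (\ref{e:+3'}), and $wx^2wx^2=2wx^2$) descend from $\End V$ exactly as you say. The differences are in how the two real points are settled. For triviality of $V_\bot$ the paper merely quotes the already-treated case $n=2$, $i=1$: $\Quad(V)$ is a $G_0$-invariant quadratic module, hence $G_0$-trivial, hence equal to $C_V(G_0)$; your direct computation with (\ref{e:+3}) and the derived identity $wxw=-(wx+xw+x)$ re-proves this from scratch -- sound, just redundant (and you recycle the letter $c$, already reserved for $\cosh x$). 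The genuinely different step is $Z_1^2(\bar V)=0$, which you rightly note does not follow formally from $Z_1^2(V)\le V_\bot$. The paper gets it softly: $Z_1^2(\bar V)\le\Quad(\bar V)\le C_{\bar V}(G_0)$, and $C_{\bar V}(G_0)=0$ because on its preimage $W$ one has $(g-1)\cdot W\le C_V(G_0)$ for all $g\in G_0$, so $g\mapsto(g-1)|_W$ is a homomorphism of $G_0$ into a torsion-free abelian group and must vanish (phrased in the paper via the congruence subgroup and the finite abelianization of $\SL_2(\Z)$, using the $\Q$-structure). Your lifting computation replaces this group-theoretic input by two further applications of (\ref{e:+3}) and of $wxw=-(wx+xw+x)$, and it checks out line by line. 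The trade-off: the paper's route is shorter, recycles the $n=2$ case, and is exactly the mechanism reused in the higher cases; yours is purely equational and self-contained, needing only invertibility of $2$ rather than any structural fact about $\SL_2(\Z)$, which makes it somewhat more robust (e.g.\ for the $\F_p$ analogue evoked in the remarks).
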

\begin{proofclaim}
First recall that $V_\bot$ is a $G_0$-submodule; by the case $n = 2, i = 1$ it is $G_0$-trivial: hence $V_\bot = \Quad(V) = C_V(G_0)$.

Multiply (\ref{e:+3}) on the right by $x$, and find in $\End(V)$: $x^2 w x^2 = 2 x^2$. On the other hand by (\ref{e:+3'}): $wxwx^2 = - xwx^2$. 

These formula still hold of the action on the quotient module $\simpleoverline{V} = V/V_\bot$. By the first paragraph now applied in $\simpleoverline{V}$, $Z_1^2(\simpleoverline{V}) \leq \Quad(\simpleoverline{V}) \leq C_{\simpleoverline{V}}(G_0) = 0$.
But $C_{\simpleoverline{V}}(G_0) = 0$: since the congruence subgroup $G_0'$ acts trivially on the preimage of $C_{\simpleoverline{V}}(G_0)$, so does $G_0$.
So $Z_1^2(\simpleoverline{V}) = 0$ and $\simpleoverline{V}$ meets the requirements of Criterion \ref{l:SL2:criterion2}.
\end{proofclaim}

For the current case $n = 3$ we promised to split the composition series.

\begin{localnotation*}
Let $V_\top = \im (x^2) + \im (wx^2) + \im (xwx^2)$.
\end{localnotation*}
\begin{step}
$V_\top$ is a direct sum of copies of $\Sym_\Q^2 \Nat G_0$; $V/V_\top$ is $G_0$-trivial.
\end{step}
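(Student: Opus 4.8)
The plan is to run Criterion \ref{l:SL2:criterion2} directly on the subspace $V_\top = \im(x^2) + \im(wx^2) + \im(xwx^2)$, with $n = 3$, and then to read off the triviality of $V/V_\top$ from $(\ref{e:+3})$. No new computation is needed: everything rests on the two relations already in hand, namely $x^2 w x^2 = 2 x^2$ (obtained a moment ago by right-multiplying $(\ref{e:+3})$ by $x$), $wxwx^2 = -xwx^2$ (this is $(\ref{e:+3'})$), and their immediate consequence $wx^2wx^2 = 2wx^2$ (left-multiply the first by $w$).

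First I would check that $V_\top$ is a $G_0$-submodule. For $w$-invariance: $w$ carries $\im(x^2)$ onto $\im(wx^2)$, carries $\im(wx^2)$ back onto $\im(x^2)$ since $w^2 = i = 1$, and carries $\im(xwx^2)$ onto $\im(wxwx^2) = \im(xwx^2)$ by $(\ref{e:+3'})$. For $x$-invariance: $x$ annihilates $\im(x^2)$ (as $x^3 = 0$), carries $\im(wx^2)$ into $\im(xwx^2)$, and carries $\im(xwx^2)$ onto $\im(x^2wx^2) = \im(x^2)$ by $x^2wx^2 = 2x^2$. Since $u = \exp x$, this makes $V_\top$ invariant under $\langle u, w\rangle = G_0$.

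Next I would feed $V_\top$ to Criterion \ref{l:SL2:criterion2}. For $n = 3$ the three identities it demands are exactly $wx^2wx^2 = 2wx^2$, $wxwx^2 = -xwx^2$ and $x^2wx^2 = 2x^2$, all three on the list above and all still holding on the submodule $V_\top$. For the supplementary hypothesis I would use the surjectivity form $V_\top = \im x + \im(wx^2)$: applying the same relations gives $x(V_\top) = \im(x^2) + \im(xwx^2)$ and $wx^2(V_\top) = \im(wx^2)$, whose sum is $V_\top$. (The injectivity form $\ker x \cap \ker(x^2 w) = 0$ is in fact also true on $V_\top$ once one knows its structure, but it is the surjectivity form that can be verified by inspection.) Criterion \ref{l:SL2:criterion2} then yields $V_\top \simeq \bigoplus_I \Sym_\Q^2 \Nat G_0$.

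Lastly, for $V/V_\top$: rewrite $(\ref{e:+3})$ as $2x = x^2wx + xwx^2$; both summands land in $V_\top$ — the first because $\im(x^2wx) \subseteq \im(x^2)$, the second by the very definition of $V_\top$ — so $\im x \subseteq V_\top$, and hence $(u-1)V = (x + \tfrac{1}{2}x^2)V \subseteq V_\top$. Thus $u$ acts trivially on $V/V_\top$, whence so does the conjugate $wuw^{-1}$, and since $\langle u, wuw^{-1}\rangle = \SL_2(\Z)$ the whole group $G_0$ acts trivially on $V/V_\top$. I expect no serious obstacle; the one place that genuinely uses the computation of this section is the $G_0$-invariance of $V_\top$, where $(\ref{e:+3'})$ and $x^2wx^2 = 2x^2$ are indispensable, and the one thing to watch is the choice of the surjective rather than the injective supplementary hypothesis of Criterion \ref{l:SL2:criterion2}. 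With both Claims in hand the promised splitting follows at once: $V_\bot \cap V_\top$ is a trivial submodule of $\bigoplus_I \Sym_\Q^2 \Nat G_0$, hence $0$, while $V/(V_\bot + V_\top)$ is a common quotient of a trivial module and of $\bigoplus \Sym_\Q^2 \Nat G_0$, hence $0$; so $V = V_\bot \oplus V_\top$.
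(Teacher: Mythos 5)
Your proof is correct and takes essentially the same route as the paper: show $G_0$-invariance of $V_\top$ using (\ref{e:+3'}) and $x^2wx^2=2x^2$, then apply Criterion \ref{l:SL2:criterion2} to $V_\top$ with the relations $x^2wx^2=2x^2$, $wxwx^2=-xwx^2$, $wx^2wx^2=2wx^2$. The only (harmless) variations are that you verify the image-form hypothesis $V_\top = x(V_\top)+wx^2(V_\top)$ where the paper checks the kernel form, and you get triviality of $V/V_\top$ directly from $\im x \leq V_\top$ via (\ref{e:+3}) rather than by quoting the case $n=2$, $i=1$.
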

\begin{proofclaim}
$G_0$-invariance of $V_\top$ is obvious thanks to  (\ref{e:+3'}). Recall that in $\End(V)$, $x^2wx^2 = 2x^2$ and $wxwx^2 = - xwx^2$; these still hold in $\End(V_\top)$. Moreover one easily sees  that $Z_1(V_\top) = \im x^2$ and $w\cdot Z_2(V_\top) = \im (wx^2) + \im (xwx^2)$ are in direct sum. So $V_\top$ meets the requirements of Criterion \ref{l:SL2:criterion2} and has the desired form. Since $x^2$ annihilates the quotient module $V/V_\top$, the latter is $G_0$-trivial by the case $i = 1$, $n = 2$.
\end{proofclaim}

Finally let $q$ be a term in $x$ and $w$ which evaluates to $0$ on the $G_0$-trivial line and to $1$ on the adjoint representation (take for instance $\pi_1 + \pi_2 + \pi_3$ with the notations of Criterion \ref{l:SL2:criterion1}).
Since $q$ is $1$ on $V/V_\bot$, $\ker q \leq V_\bot \leq \ker q$. Since $q$ is $0$ on $V/V_\top$, $\ker(q-1) \leq V_\top \leq \ker(q-1)$.
Moreover $q\cdot V \leq V_\top$ so $q(q-1) = 0$ in $\End V$. Hence $V = \ker q \oplus \ker (q-1) = V_\bot\oplus V_\top$.

\begin{localremark*}
One could proceed to module identification by using an action of the Lie ring $\sl_2(\Z)$. Let indeed:
\begin{align*}
y &:= -wxw = -wsw = cws + swc\\
& = wx + \frac{1}{2}x^2 wx + xw + \frac{1}{2} xwx^2;\\
h &:= [x, y] = xwx + x^2 w + \frac{1}{2}x^2wx^2 - wx^2 - \frac{1}{2}x^2wx^2 - xwx\\
& = x^2 w - wx^2
\end{align*}
One finds $[h, x] = x^2 wx + xwx^2 = 2x$ by (\ref{e:+3}), and $[h, y] = -hwxw + wxwh = whxw - wxhw = 2wxw = -2y$. We thus retrieve an action of $\sl_2(\Z)$ on $V$; it extends to an action of $\sl_2(\Q)$, and we could conclude with the techniques of \cite{TV-II}.
\end{localremark*}

\subsubsection{Case $n = 4$, $i = 1$}\label{s:+4}

Suppose $i = 1$ in $\End(V)$ and $n = 4$, so that $c = 1 + \frac{1}{2}x^2$ and $s = x + \frac{1}{6}x^3$.

Bear in mind that $\Cub(V)$ is $G_0$-invariant.
\begin{step}
$\Cub(V)$ and $V/\Cub(V)$ are cubic modules.
\end{step}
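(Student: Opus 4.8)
The plan is to reduce the whole claim to the single identity $x^3 w x^3 = 0$ in $\End V$. One half of the statement is immediate: by the Observation $\Cub(V)$ is $G_0$-invariant (we are in the case $i=1$), and since each of $Z_1^3(V)$, $Z_2^2(V)$, $Z_3^1(V)$ is contained in $Z_3(V)=\ker x^3$, so is $\Cub(V)$; hence $x^3$ --- equivalently $(u-1)^3$ --- annihilates $\Cub(V)$, which is therefore a cubic module. The content is to prove that $V/\Cub(V)$ is cubic, that is, that $x^3 V \subseteq \Cub(V)$.

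For this I would start from the General Formula $(\ref{e:+})$, all of whose $x^4$-terms vanish since $n=4$, and right-multiply the surviving identity by $x^3$. Using $sx^3=0$, $cx^3=x^3$, $u^{\pm1}x^3=x^3$ and $x^6=0$, the three terms ending in $s$ and the term $\tfrac12 u^{-1}wx^3$ drop out, the contributions of $-3sw$ and $3swc$ cancel each other, and the remaining terms combine into
\[
0 = \tfrac12\,(1+w+uw)\,x^3 w x^3 .
\]

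Next I would exploit $w^2=i=1$. Fix $v\in V$ and put $p=x^3wx^3v$; then $p\in\ker x$ (as $x^4=0$) and $(1+w+uw)p=0$. Expanding $uwp=wp+xwp+\tfrac12 x^2wp+\tfrac16 x^3wp$ turns this into $p+2wp+xwp+\tfrac12 x^2wp+\tfrac16 x^3wp=0$; applying $x$ to this three times, each step simplified by $xp=0$ and $x^4=0$, produces a downward cascade forcing $x^3wp=x^2wp=xwp=0$, so that $p+2wp=0$. Hence $wp=-\tfrac12 p$, and applying $w$ once more gives $p=w^2p=-\tfrac12 wp=\tfrac14 p$, so $p=0$ ($3$ being invertible in $\End V$). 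Thus $x^3wx^3=0$.

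The claim then closes at once: given $v\in V$, set $a=x^3v$; since $x^4=0$ we have $a\in\ker x=Z_1(V)$, while $x^3(wa)=x^3wx^3v=0$ shows $wa\in\ker x^3=Z_3(V)$, i.e. $a\in w\,Z_3(V)$ because $w^{-1}=w$ when $i=1$. Therefore $a\in Z_1(V)\cap w\,Z_3(V)=Z_1^3(V)\subseteq\Cub(V)$, so $x^3$ annihilates $V/\Cub(V)$, which is thus a cubic module. The only step demanding genuine care is the right-multiplication of $(\ref{e:+})$ by $x^3$: one must keep track of every monomial to confirm that, among the terms of $x$-degree below $3$, exactly the two $swx^3$ contributions survive and that they cancel --- after the displayed identity everything is forced by the nilpotence of $x$ together with $w^2=1$.
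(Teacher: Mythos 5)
Your proposal is correct and follows essentially the paper's own route: right-multiplying (\ref{e:+}) by $x^3$ to get $x^3wx^3 + wx^3wx^3 + uwx^3wx^3 = 0$, deducing $x^3wx^3 = 0$, and concluding $x^3\cdot V \leq Z_1^3(V) \leq \Cub(V)$. The only cosmetic difference lies in extracting $x^3wx^3=0$ from that identity: the paper divides on the left by the unit $u+1$ and then multiplies by $2w-1$, whereas you run a pointwise cascade applying powers of $x$ and then $w^2=1$ --- both are the same elementary manipulation.
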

\begin{proofclaim}
This is obvious for $\Cub(V)$. For the quotient, we first derive a formula in $\End(V)$.
Multiply equation (\ref{e:+}) on the right by $2 x^3$: one finds $0 = x^3wx^3 + uwx^3wx^3 + wx^3 wx^3 = (u+1) wx^3wx^3 + x^3wx^3$, so dividing on the left by $u+1$, an invertible element in $\End(V)$, one gets $wx^3wx^3 + \frac{1}{2}x^3wx^3 = 0$. Now multiply on the left by $(2w-1)$, and find $x^3 wx^3 = 0$ in $\End V$.

It follows that $x^3\cdot V \leq Z_1^3(V) \leq \Cub(V)$: so the quotient module $V/\Cub(V)$ has length at most $3$.
\end{proofclaim}

\begin{localremark*}
The module $V$ itself need not be cubic.
As a matter of fact pushing the computation to its limits yields in $\End V$ the equation $x^3w + wx^3 + x^2wx + xwx^2 = 2x$, an equation we do not use but which certainly controls the extension $\Cub(V)$-by-$V/\Cub(V)$ in a large measure.
\end{localremark*}

\subsubsection{Case $n = 4$, $i = -1$}\label{s:-4}

Suppose $n = 4$ and $i = -1$ in $\End (V)$.

\begin{step}\label{st:-4:Z13}
$Z_1^3(V) \leq Z_1^2(V)$.
\end{step}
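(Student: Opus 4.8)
The plan is to feed an arbitrary element of $Z_1^3(V)$ into the general formula (\ref{e:-}), now specialised to $n = 4$ (so that $x^4 = 0$ and the four $x^4$-terms of (\ref{e:-}) vanish as operators). Unwinding the definition of $Z_1^3(V)$ and using that $w^{-1} = -w$ in $\End V$ because $i = -1$, membership $a \in Z_1^3(V)$ is exactly the pair of operator conditions $xa = 0$ and $x^3 wa = 0$; likewise $a \in Z_1^2(V)$ is $xa = 0$ and $x^2 wa = 0$. So it suffices to derive $x^2 wa = 0$ from the two hypotheses.

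First I would record what the hypotheses buy. From $xa = 0$: $sa = 0$, $ca = a$, $ua = u^{-1}a = a$, and $x^3 a = 0$. From $x^3 wa = 0$: $x^3 w u\cdot a = u w x^3 w\cdot a = w x^3 w u^{-1}\cdot a = 0$. Applying (\ref{e:-}) to $a$ then annihilates every summand except $3\,swc\cdot a + 3\,cw\cdot a - 3\,wc\cdot a + 3c\cdot a$. Since $swc\cdot a = s(wa)$, $cw\cdot a = c(wa)$, $wc\cdot a = wa$, and $c\cdot a = a$, this collapses (after dividing by $3$ and using $s + c = u$) to
\[(u-1)(wa) = -a.\]

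Now I would apply $x$ on the left. The right-hand side dies since $xa = 0$, while $x(u-1) = x^2 + \frac{1}{2}x^3 + \frac{1}{6}x^4 = x^2 + \frac{1}{2}x^3$ in $\End V$; hence $(x^2 + \frac{1}{2}x^3)(wa) = 0$. Invoking $x^3 wa = 0$ once more leaves $x^2 wa = 0$, which is precisely $a \in Z_1^2(V)$.

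I do not anticipate a genuine obstacle: once (\ref{e:-}) is available the argument is a two-line specialisation. The only points that need care are the sign bookkeeping $w^{-1} = -w$ when turning the inclusion of subspaces into identities among operators, and the fact that the hypothesis $x^3 wa = 0$ is used \emph{twice} — first to trim (\ref{e:-}) down to its four surviving terms, and then again to pass from $(x^2 + \frac{1}{2}x^3)wa = 0$ to $x^2 wa = 0$.
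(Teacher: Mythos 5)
Your proposal is correct and is essentially the paper's own proof: specialise $(E_-)$ to an element $a_1$ with $xa_1 = x^3wa_1 = 0$, collapse it to $3xwa_1 + \tfrac{3}{2}x^2wa_1 + 3a_1 = 0$ (your $(u-1)(wa) = -a$), and then left-multiply by $x$ to kill everything but $3x^2wa_1$. The only difference is that the paper leaves this last multiplication by $x$ implicit, while you spell it out, together with the sign bookkeeping $w^{-1} = -w$.
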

\begin{proofclaim}
Let $a_1 \in Z_1^3(V)$. Then equation (\ref{e:-}) applied to $a_1$ simplifies into: $0 = 3xwa_1 + \frac{3}{2}x^2 w a_1 + 3 a_1$, so $a_1 \in Z_1^2(V)$.
\end{proofclaim}

\begin{step}\label{st:-4:formules}
$(6x^3+x^3wx^3)\cdot V = (wx^2wx^3 - 2xwx^3)\cdot V \leq \Quad(V)$.
\end{step}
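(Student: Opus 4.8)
The plan is to separate the claim into the operator identity
\[
6x^3 + x^3 w x^3 = 3\bigl(w x^2 w x^3 - 2 x w x^3\bigr)
\]
in $\End V$ — which already gives equality of the two images, since $3$ is invertible on the $\Q$-vector space $V$ — and the containment $(6x^3 + x^3wx^3)\cdot V \subseteq \Quad(V)$. For the identity I would use only the $i = -1$ relations, the equality $x^2 = 2(c-1)$, and $x^4 = 0$: from $w^2 = i = -1$ and $wcw = cws + swc$ one gets $wx^2w = 2wcw - 2w^2 = 2cws + 2swc + 2$; right-multiplying by $x^3$, and noting that $sx^3 = 0$ while $cx^3 = x^3$ since $n = 4$, only $2swx^3 = 2xwx^3 + \tfrac{1}{3}x^3wx^3$ and $2x^3$ survive, whence $wx^2wx^3 = 2xwx^3 + \tfrac{1}{3}x^3wx^3 + 2x^3$, i.e. $3(wx^2wx^3 - 2xwx^3) = 6x^3 + x^3wx^3$.

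For the containment I would start from the general formula (\ref{e:-}), discard the $x^4$-monomials (they vanish, $n = 4$), and right-multiply by $x^3$; keeping only what survives $x^4 = 0$ yields the relation
\begin{equation*}
0 = 3xwx^3 + \tfrac{3}{2}x^2wx^3 + 3x^3 + x^3wx^3 + \tfrac{1}{2}xwx^3wx^3 + \tfrac{1}{4}x^2wx^3wx^3 + \tfrac{1}{12}x^3wx^3wx^3
\tag{$\ast$}
\end{equation*}
in $\End V$. Left-multiplying $(\ast)$ by $x^2$ annihilates all but two terms and gives $x^3wx^3wx^3 = -6x^3wx^3$; feeding this into $x\cdot(\ast)$ then gives $x^2wx^3wx^3 = -6x^2wx^3$. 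Now fix $v\in V$ and put $b = (6x^3 + x^3wx^3)v$. Since $x^4 = 0$ we have $xb = 0$, so $b \in Z_1(V)$; and $x^2wb = 6x^2wx^3v + x^2wx^3wx^3v = 0$ by the relation just obtained, so $wb \in Z_2(V)$, hence $b \in w\cdot Z_2(V)$ (recall $w^{-1} = -w$). Therefore $b \in Z_1(V)\cap w\cdot Z_2(V) = Z_1^2(V) \subseteq \Quad(V)$, as asserted.

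The only genuine difficulty is the bookkeeping leading to $(\ast)$: one must be scrupulous about which monomials die once $x^4 = 0$ and careful not to mishandle the $i = -1$ conventions ($w^2 = -1$, $w^{-1} = -w$). Conceptually there is nothing: once $(\ast)$ is available the two left-multiplications and the membership test are immediate, and the whole computation merely echoes, one degree higher, the argument already used for Claim \ref{st:-4:Z13}.
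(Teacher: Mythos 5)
Your proof is correct and follows essentially the same route as the paper: both halves rest on right-multiplying $(E_-)$ by $x^3$ to extract $x^2wx^3wx^3=-6x^2wx^3$ (hence $\im(6x^3+x^3wx^3)\leq Z_1^2(V)\leq\Quad(V)$) and on the expansion $wx^2wx^3=2xwx^3+\frac{1}{3}x^3wx^3+2x^3$ for the equality of images. The only cosmetic difference is that you obtain the key relation by left-multiplying $(\ast)$ by $x^2$ and then by $x$, where the paper left-multiplies once by the formal series $\frac{x^2}{u-1}=x+O(x^2)$.
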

\begin{proofclaim}
Multiply equation (\ref{e:-}) on the right by $x^3$:
\begin{align*}
0 & = 3swx^3 + 3cwx^3 - 3wx^3 + 3x^3 + \frac{1}{2}x^3wx^3 + \frac{1}{2}uwx^3wx^3 - \frac{1}{2}wx^3wx^3\\
& = 3 (u-1)wx^3 + 3x^3 + \frac{1}{2}x^3wx^3 + \frac{1}{2}(u-1)wx^3wx^3
\end{align*}
so multiplying on the left by $\frac{x^2}{u-1} = x + O(x^2)$, one finds $3x^2wx^3 + \frac{1}{2}x^2wx^3wx^3 = 0$. Hence $\im (6x^3+x^3wx^3) \leq w\cdot Z_2(V)$; inclusion in $Z_1(V)$ is obvious so $\im (6x^3+x^3wx^3) \leq Z_1^2(V) \leq \Quad(V)$.

Moreover:
\begin{align*}
wx^2wx^3 & = w(2c-2)wx^3\\
& = 2cwsx^3 + 2swcx^3 + 2x^3\\
& = 2 xwx^3 + \frac{1}{3}x^3wx^3 + 2x^3
\end{align*}
whence $\im(wx^2wx^3 - 2xwx^3) = \im (6x^3+x^3wx^3) \leq \Quad(V)$.
\end{proofclaim}

\begin{localnotation*}\
\begin{itemize}
\item
Let $V_1 = \Quad(V)$, $\simpleoverline{\pi}$ be the projection map modulo $V_1$, and $\simpleoverline{V} = V/V_1$.
\item
Let $\simpleoverline{V}_2 = \Quad(\simpleoverline{V})$, $V_2 = \simpleoverline{\pi}^{-1}(\simpleoverline{V}_2)$, and $\doubleoverline{V} = V/V_2$.
\end{itemize}
\end{localnotation*}

We know that $V_1$ and $\simpleoverline{V}_2 \simeq V_2/V_1$ are quadratic $\Q[G_0]$-modules. By Claim \ref{st:-4:formules}, one has in $\End\left(\simpleoverline{V}\right)$, and therefore in $\End \left(\doubleoverline{V}\right)$ as well, $x^3wx^3 = - 6 x^3$ and $wx^2wx^3 = 2 xwx^3$. But this is not enough in order to apply Criterion \ref{l:SL2:criterion2}.

\begin{step}
$Z_1^3(\doubleoverline{V}) = 0$.
\end{step}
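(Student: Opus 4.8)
The plan is to fix $\doubleoverline{a}\in Z_1^3(\doubleoverline{V})$ and prove $\doubleoverline{a}=0$. Since $x\doubleoverline{a}=0$ and $x^3w\doubleoverline{a}=0$, the computation from the proof of Claim~\ref{st:-4:Z13}, carried out inside $\doubleoverline{V}$, first yields from equation~(\ref{e:-}) the identity $0=3xw\doubleoverline{a}+\frac{3}{2}x^2w\doubleoverline{a}+3\doubleoverline{a}$; multiplying on the left by $x$ annihilates the last two terms, so $x^2w\doubleoverline{a}=0$, and then the identity collapses to $\doubleoverline{a}=-xw\doubleoverline{a}$. Putting $\doubleoverline{b}=-w\doubleoverline{a}$ we get $\doubleoverline{a}=x\doubleoverline{b}$, $x^2\doubleoverline{b}=-x^2w\doubleoverline{a}=0$, and $w\doubleoverline{b}=-i\doubleoverline{a}=\doubleoverline{a}=x\doubleoverline{b}$; since moreover $wx\doubleoverline{b}=w^2\doubleoverline{b}=-\doubleoverline{b}$, the $G_0$-submodule of $\doubleoverline{V}$ generated by $\doubleoverline{b}$ is the quadratic subspace $\Q\doubleoverline{b}+\Q x\doubleoverline{b}$.

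Next I would lift $\doubleoverline{b}$ to some $\simpleoverline{b}\in\simpleoverline{V}$, so that $x^2\simpleoverline{b}\in\simpleoverline{V}_2$ and $w\simpleoverline{b}-x\simpleoverline{b}\in\simpleoverline{V}_2$, and then prove the one non-formal point of the argument: $x^3\simpleoverline{b}=0$ in $\simpleoverline{V}$. Because $\simpleoverline{V}_2=\Quad(\simpleoverline{V})$ is a \emph{quadratic} $G_0$-submodule, $x^3\simpleoverline{b}=x(x^2\simpleoverline{b})$ lies in $\simpleoverline{V}_2$ and is killed by $x$ (as $x^2(x^2\simpleoverline{b})=0$), hence $x^3\simpleoverline{b}\in Z_1(\simpleoverline{V})\cap\simpleoverline{V}_2=Z_1^2(\simpleoverline{V})$ — the intersection being $Z_1^2$ because $Z_1\cap Z_2^1=Z_1^1\subseteq Z_1^2$. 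In particular $x^2wx^3\simpleoverline{b}=0$, and substituting this into the relation $wx^2wx^3=2xwx^3$ of Claim~\ref{st:-4:formules} (valid in $\End\simpleoverline{V}$) gives $2xwx^3\simpleoverline{b}=w(x^2wx^3\simpleoverline{b})=0$, so $x^3\simpleoverline{b}\in wZ_1(\simpleoverline{V})$ as well. Thus $x^3\simpleoverline{b}\in Z_1^1(\simpleoverline{V})=C_{\simpleoverline{V}}(G_0)$, which is $0$ since $C_{\simpleoverline{V}}(G_0)\subseteq C_{\simpleoverline{V}}(w)=0$ (as $i=-1$). I expect this to be the crux of the claim: it is where the quadratic nature of $\Quad(\simpleoverline{V})$, the bilinear relation of Claim~\ref{st:-4:formules}, and the lack of $G_0$-fixed vectors when $i=-1$ all have to be brought to bear.

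Finally, granting $x^3\simpleoverline{b}=0$: applying $x^2$ to $w\simpleoverline{b}-x\simpleoverline{b}\in\simpleoverline{V}_2$ gives $x^2w\simpleoverline{b}=x^3\simpleoverline{b}=0$, so the cyclic module $\simpleoverline{N}=\Q[G_0]\simpleoverline{b}$, whose image in $\doubleoverline{V}$ is $\Q\doubleoverline{b}+\Q x\doubleoverline{b}$, sits inside $\simpleoverline{V}_2+\Q\simpleoverline{b}+\Q x\simpleoverline{b}$, a subspace annihilated by $x^3$ ($x^3$ kills $\simpleoverline{V}_2$ since it is quadratic, $x^3\simpleoverline{b}=0$, and $x^4\simpleoverline{b}=x^2(x^2\simpleoverline{b})=0$). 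Hence $\ell(\simpleoverline{N})\leq 3$; as $i=-1$ acts on $\simpleoverline{N}$, the cases $n=2$ and $n=3$ already treated force $\simpleoverline{N}$ to be quadratic and then a direct sum of copies of $\Sym_\Q^1\Nat G_0$, for which $\simpleoverline{N}=\Quad(\simpleoverline{N})\subseteq\Quad(\simpleoverline{V})=\simpleoverline{V}_2$. Therefore $\simpleoverline{b}\in\simpleoverline{V}_2$, i.e.\ $\doubleoverline{b}=0$ and $\doubleoverline{a}=x\doubleoverline{b}=0$, which is what was wanted.
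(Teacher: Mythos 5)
Your argument is correct, and it reaches the claim by a genuinely different mechanism than the paper's, although the ingredients overlap. The paper proves the stronger statement $\Quad(\doubleoverline{V})=0$: it introduces the next layer $V_3=\doubleoverline{\pi}^{-1}(\Quad(\doubleoverline{V}))$, exploits the quadratic-by-quadratic-by-quadratic filtration of $V_3$ to get $x^2wx^3wx^3=0$ there, combines this with Claim \ref{st:-4:formules} to obtain $x^2wx^3=x^3wx^3=0$ and hence $x^3=0$ on $V_3/V_1$, so the case $n=3$, $i=-1$ collapses $V_3$ to $V_2$; only then is Claim \ref{st:-4:Z13} invoked, at the very end. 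You instead argue element-wise: you rerun the computation of Claim \ref{st:-4:Z13} inside $\doubleoverline{V}$ at the outset to produce $\doubleoverline{b}$, lift it to $\simpleoverline{b}$, and kill $x^3\simpleoverline{b}$ outright by placing it in $Z_1(\simpleoverline{V})\cap\simpleoverline{V}_2=Z_1^2(\simpleoverline{V})$ and feeding this into the relation $wx^2wx^3=2xwx^3$ of $\End(\simpleoverline{V})$, so that $x^3\simpleoverline{b}\in Z_1^1(\simpleoverline{V})=0$; the cyclic module $\Q[G_0]\simpleoverline{b}$ then has length at most $3$, and the $n\leq 3$, $i=-1$ cases push it into $\simpleoverline{V}_2$, whence $\doubleoverline{b}=0$. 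The shared core is Claim \ref{st:-4:formules} read modulo $V_1$, the low-length cases, and the vanishing of $Z_1^1$ when $i=-1$; what differs is that you replace the paper's global nilpotency trick on $V_3$ by a local argument on one lifted vector, and you obtain exactly $Z_1^3(\doubleoverline{V})=0$ rather than $\Quad(\doubleoverline{V})=0$ -- which is all that the subsequent application of Criterion \ref{l:SL2:criterion2} requires, though the paper's stronger conclusion explains structurally why the series stops at $V_2$. The small steps you use implicitly do check out: $Z_1\cap(Z_1^2+Z_2^1)=Z_1^2$ because $Z_1\cap Z_2^1=Z_1^1\subseteq Z_1^2$; the subspace $\simpleoverline{V}_2+\Q\simpleoverline{b}+\Q x\simpleoverline{b}$ is $G_0$-invariant since the defining congruences for $\simpleoverline{b}$ lift those for $\doubleoverline{b}$; and a quadratic module on which $i=-1$ is a sum of copies of $\Sym_\Q^1\Nat G_0$, hence equals its own $\Quad$ and sits inside $\Quad(\simpleoverline{V})$.
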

\begin{proofclaim}
Let $\doubleoverline{\pi}$ be the projection map modulo $V_2$, $\doubleoverline{V}_3 = \Quad(\doubleoverline{V})$, and $V_3 = \doubleoverline{\pi}^{-1}(\doubleoverline{V}_3)$. It is clear that $V_1$, $V_2/V_1$, and $V_3/V_2$ are quadratic modules. So far we have constructed a quadratic-by-quadratic-by-quadratic submodule $V_3 \leq V$. 

By Claim \ref{st:-4:formules}, one has $x^2w(6x^3 + x^3wx^3) = 0$ in $\End(V_3)$ (actually even in $\End(V)$). Since $V_3$ is quadratic-by-quadratic-by-quadratic, one has in $\End(V_3)$: $x^2wx^3wx^3 = 0$. So $x^2wx^3 = 0$, and $x^3wx^3 = 0$. Always by Claim \ref{st:-4:formules}, one has in $\End(V_3/V_1)$: $6x^3 + x^3wx^3 = 0$ (actually even in $\End(V/V_1)$). So $x^3 = 0$ in $\End(V_3/V_1)$. Hence $V_3/V_1$ is actually a cubic module; by the case $n = 3$, $i = -1$, it is therefore quadratic, i.e. $V_3 = V_2$. This proves $\Quad(\doubleoverline{V}) = 0$.

Finally by Claim \ref{st:-4:Z13}, one has $Z_1^3(\doubleoverline{V}) \leq \Quad(\doubleoverline{V}) = 0$.
\end{proofclaim}

We may now apply Criterion \ref{l:SL2:criterion2} to $\doubleoverline{V}$; the composition series $0 \leq V_1 \leq V_2 \leq V$ has the desired properties.

\begin{localremark*}
Should the author be surprised not being able to apply Criterion \ref{l:SL2:criterion2} in $\simpleoverline{V}$? 
\end{localremark*}

\subsubsection{Case $n = 5$, $i = -1$}\label{s:-5}

Suppose $i = -1$ in $\End(V)$ and $n =5$, so that $c = 1 + \frac{1}{2}x^2 + \frac{1}{24}x^4$ and $s =x + \frac{1}{6} x^3$.

Bear in mind that $\Quad(V)$ is $G_0$-invariant. The author is certainly naive, but he is still puzzled by not having been able to prove that for the expected definition, $\Quart(V)$ is. He did not succeed modulo $\Quad(V)$ nor even modulo $\pi^{-1}(\Quad(V/\Quad(V)))$. So here is a slightly revised definition.

\begin{localnotation*}
Let\inmargin{$\Quart'$} $\Quart'(V) = Z_1^4(V) + (Z_2^3(V) \cap \ker(x^4wx^2w)) + (Z_3^2(V) \cap \ker(x^4wx^2)) + Z_4^1(V)$.
\end{localnotation*}

\begin{step}\label{st:-4:q}
If $a_1 \in Z_1^4(V)$, then $6 a_1 + 6 xwa_1 + x^3wa_1 + xwx^3wa_1 = 0$; in particular $6 a_1 + x^3wa_1 \in \Quad(V)$.
\end{step}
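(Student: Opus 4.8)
The plan is to feed the general formula $(\ref{e:-})$ the vector $a_1$, repackage the resulting debris into a single clean relation, and then bootstrap with powers of $x$.

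First I would unravel the hypothesis. Since $w^2 = i = -1$ in $\End V$, the operator $w$ is invertible with $w^{-1} = -w$, so $a_1 \in w\cdot Z_4(V)$ is equivalent to $x^4 wa_1 = 0$; hence $a_1 \in Z_1^4(V)$ says exactly $xa_1 = 0$ and $x^4 wa_1 = 0$. In particular $ca_1 = u^{\pm 1}a_1 = a_1$ and $sa_1 = 0$. Writing $b = wa_1$ (so $x^4 b = 0$) I would then evaluate $(\ref{e:-})$ at $a_1$. Six of the thirteen terms vanish outright: the $cws$-term because $sa_1 = 0$, the $u^{-1}wx^3$-term because $x^3 a_1 = 0$, and the four $x^4$-terms because $x^4 a_1 = x^4 wa_1 = 0$; in each of the remaining seven, any $u^{\pm 1}$ or $c$ acting directly on $a_1$ becomes the identity. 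Collecting $3sb + 3cb - 3b = 3(u-1)b$ (as $u = c+s$) and combining the two surviving $wx^3$-terms into $\tfrac12(u-1)wx^3 b$, one is left with
\[ 6(u-1)b + 6a_1 + x^3 b + (u-1)wx^3 b = 0 . \]

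The crucial step is then to introduce $v := 6a_1 + x^3 wa_1 = 6a_1 + x^3 b$; since $wv = 6b + wx^3 b$, the displayed identity is simply $(u-1)wv = -v$, and moreover $xv = 6xa_1 + x^4 wa_1 = 0$, so $v \in Z_1(V)$. To conclude I would bootstrap: because $x^5 = 0$ we have $u - 1 = x + \tfrac12 x^2 + \tfrac16 x^3 + \tfrac1{24} x^4$, so applying $x^3$, then $x^2$, then $x$ to the relation $(u-1)wv = -v$ and using $x^k v = 0$ for $k \geq 1$ yields in turn $x^4 wv = 0$, then $x^3 wv = 0$, then $x^2 wv = 0$; substituting these back into $(u-1)wv = -v$ leaves $xwv = -v$, i.e. $(1+xw)v = 0$, which upon expanding $v$ is precisely the asserted relation $6a_1 + 6xwa_1 + x^3 wa_1 + xwx^3 wa_1 = 0$. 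For the final clause, $xv = 0$ puts $v$ in $Z_1(V)$ and $x^2 wv = 0$ puts $wv$ in $Z_2(V)$, hence $v \in w^{-1}Z_2(V) = w\cdot Z_2(V)$; therefore $6a_1 + x^3 wa_1 = v \in Z_1(V) \cap w\cdot Z_2(V) = Z_1^2(V) \leq \Quad(V)$.

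The hard part will be the opening computation — tracking the thirteen terms of $(\ref{e:-})$ without slip and, above all, recognising the leftover combination as $(u-1)wv + v$ for the right object $v = 6a_1 + x^3 wa_1$. Everything downstream of that repackaging (the bootstrap on powers of $x$, and the location of $v$ in $\Quad(V)$) is a purely formal manipulation with the single nilpotent $x$ and should present no further obstacle.
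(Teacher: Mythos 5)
Your proof is correct and follows essentially the same route as the paper: evaluate (\ref{e:-}) at $a_1$, recognise the surviving terms as the single relation $(u-1)wv = -v$ with $v = 6a_1 + x^3wa_1$, and conclude $xwv = -v$ and $v \in Z_1^2(V) \leq \Quad(V)$. The only (immaterial) difference is the final step, where the paper multiplies on the left by $\frac{x}{u-1} = 1 + O(x)$ while you bootstrap with $x^3$, $x^2$, $x$ to kill the higher-order terms of $u-1$ — an equivalent device.
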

\begin{proofclaim}
Apply equation (\ref{e:-}) to such an element $a_1$, and find: $0 = 3swa_1 + 3cwa_1 - 3wa_1 + 3a_1 + \frac{1}{2}x^3wa_1 + \frac{1}{2}(u-1)wx^3wa_1 = 3 a_1 + 3(u-1)wa_1 + \frac{1}{2}x^3wa_1 + \frac{1}{2}(u-1)wx^3wa_1$. Multiply on the left by $\frac{x}{u-1} = 1+O(x)$: one gets $0 = 3 a_1 + 3xwa_1 + \frac{1}{2}x^3wa_1 + \frac{1}{2}xwx^3wa_1$. So with $b_1 = 6 a_1 + x^3wa_1 \in Z_1(V)$, one has $xwb_1 = -b_1 \in Z_1(V)$, and $b_1 \in Z_1^2(V) \leq \Quad(V)$.
\end{proofclaim}

\begin{step}
$\Quart'(V)$ is $G_0$-invariant.
\end{step}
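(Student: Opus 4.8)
The statement asserts that, with $i = -1$ and $n = 5$, the revised quartic radical
\[
\Quart'(V) = Z_1^4(V) + \bigl(Z_2^3(V) \cap \ker(x^4wx^2w)\bigr) + \bigl(Z_3^2(V) \cap \ker(x^4wx^2)\bigr) + Z_4^1(V)
\]
is $G_0$-invariant. Since $w$-invariance of each of the four pieces is essentially built into the symmetric shape of the definition ($Z_j^k$ is sent to $Z_k^j$ by $w$, and the two kernel conditions $\ker(x^4wx^2w)$ and $\ker(x^4wx^2)$ are swapped by $w$ up to the central sign), the only thing to check is that $x$ — equivalently $u = \exp x$ — maps $\Quart'(V)$ into itself. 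I would verify this summand by summand, exactly as in the Observation treating $\Quad(V)$ and $\Cub(V)$: show that $x$ carries $Z_{j+1}^{k}(V)$ (intersected with the relevant kernel, when $k \leq 3$) into the next summand $Z_{j}^{k+1}(V)$ (again intersected with the appropriate kernel). The base relations available are the Steinberg-derived identities for $i = -1$, namely $wcw = cws + swc$ and $wsw = -cwc - sws$, together with $c = 1 + \tfrac12 x^2 + \tfrac1{24}x^4$, $s = x + \tfrac16 x^3$, and the master equation $(E_-)$.

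Concretely, the plan is as follows. First, the summand $Z_1^4(V)$: Claim \ref{st:-4:q} already shows that for $a_1 \in Z_1^4(V)$ one has $6a_1 + x^3wa_1 \in \Quad(V)$, while trivially $x a_1 = 0$; so $x$ kills this piece and nothing is needed. Second, $Z_4^1(V)$: if $a_4 = wb_1$ with $b_1 \in Z_1(V)$, I would compute $x^4 wx\, a_4 = -x^4 w s w w\, a_4$ using $wsw = -cwc - sws$ and the length-$5$ truncations, pushing everything to land in $x^4 c(\cdots) w w a_4 = 0$, which shows $x a_4 \in Z_3(V)$, hence $x a_4 \in Z_3^2(V)$; the extra condition $\ker(x^4wx^2)$ on the third summand must then be checked to hold automatically on $x a_4$, which is where the defining kernel was engineered. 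Third and symmetrically, the middle summands $Z_2^3(V) \cap \ker(x^4wx^2w)$ and $Z_3^2(V) \cap \ker(x^4wx^2)$: here one must show $x$ sends the first into the second and the second into $Z_4^1(V)$, and — crucially — that the kernel conditions propagate. I expect to multiply $(E_-)$ on suitable sides by powers of $x$ and by $w$, much as in the proofs of Claims \ref{st:-4:Z13} and \ref{st:-4:formules}, to produce the identities (such as a truncated form of $x^2wx^3wx^3 = -6x^2wx^3$) that force the kernel membership to be stable under one application of $x$.

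The main obstacle — and the reason the author modified the naive definition of $\Quart(V)$ — is precisely the closure of the two middle summands under $x$. Without the kernel restrictions, $x$ applied to an element of $Z_2^3(V)$ need not land in $Z_3^2(V)$: the obstruction is governed by terms like $x^4 w x^2 w$ acting on the element, which is exactly the quantity the condition $\ker(x^4wx^2w)$ is designed to kill, and one has to check both that $xa \in Z_3^2(V)$ and that $x^4wx^2(xa) = x^4wx^3 a$ vanishes, using the length bound and the relations. So the crux is a bookkeeping lemma: for $a$ in the middle summands, the relevant long monomials in $x$ and $w$ applied to $a$ reduce, via $(E_-)$ and the $c,s$ expansions, to $0$. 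Once that propagation of kernels is established, $G_0 = \langle u, w\rangle$-invariance of $\Quart'(V)$ follows formally, and the argument is no harder than the analogous verifications already carried out for $\Quad$ and $\Cub$ — just with one more layer of monomials to track.
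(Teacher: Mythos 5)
Your overall plan --- $w$-invariance is built into the symmetric shape of the definition, so it suffices to check stability under $x$ summand by summand, using the $i=-1$ relations and equation $(E_-)$ --- is exactly the paper's strategy; but the substance of the claim lies in the four computations you defer, and the concrete indications you do give would not go through as stated. For the summand $Z_4^1(V)$, what must be proved is $x^2wx\,a_4 = 0$ (membership of $xa_4$ in $w\cdot Z_2(V)$, hence in $Z_3^2(V)$), not that $xa_4 \in Z_3(V)$: the latter is automatic from $a_4 \in Z_4(V)$ and does not yield $Z_3^2(V)$, and your monomial $x^4wx\,a_4$ is not the relevant one. More importantly, the direct expansion you propose (``pushing everything to land in $x^4c(\cdots)ww\,a_4 = 0$'') does not close: writing $b_1 = wa_4 \in Z_1^4(V)$ and expanding $x = s - \frac{1}{6}x^3$ with $wsw = -cwc - sws$ leaves the term $\frac{1}{6}x^2wx^3w\,b_1$, which the conjugation relations alone do not kill. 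The paper eliminates it only through Claim \ref{st:-4:q} applied to $b_1$ (so $x^3wb_1 = q - 6b_1$ with $q \in \Quad(V)$) together with the vanishing of $x^2w$ and $x^4w$ on $\Quad(V)$; the same correction gives the kernel check $x^4wx^3a_4 = x^4w(6wa_4 - q) = 0$ needed for $xa_4$ to land in $Z_3^2(V)\cap\ker(x^4wx^2)$. In your plan Claim \ref{st:-4:q} is invoked only for the summand $Z_1^4(V)$, where it is not needed at all (that summand lies in $\ker x$), so the one genuinely nontrivial input is absent from the one place where it is required.

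Your treatment of the middle summands is also off in two respects. First, the direction is reversed: $x$ sends $Z_3^2(V)\cap\ker(x^4wx^2)$ into $Z_2^3(V)\cap\ker(x^4wx^2w)$ and the latter into $Z_1^4(V)$, not ``the first into the second and the second into $Z_4^1(V)$'' as you write (contradicting the correct general principle you state earlier). Second, no new consequence of $(E_-)$ of the kind you anticipate (a truncation of $x^2wx^3wx^3 = -6x^2wx^3$, which belongs to the $n=4$ analysis) is needed there: in the paper these two steps are short conjugation computations, $x^3wxa_3 = \dots = -\frac{1}{2}x^4wx^2a_3 = 0$ and $x^4wxa_2 = \dots = \frac{1}{2}x^4wx^2wa_2 = 0$, in which the imposed kernel hypotheses are precisely what makes the expansion vanish --- that is the whole point of the modified definition of $\Quart'$. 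So the skeleton of your plan agrees with the paper, but the proof is not there: the decisive verifications are missing, and where you are specific, the identified monomials, the direction of the maps, and the source of the needed identities are incorrect.
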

\begin{proofclaim}
If $a_1\in Z_1^4(V)$, then by Claim \ref{st:-4:q}, $6 a_1+x^3wa_1 \in \Quad(V)$.
Equivalently: for $a_4 \in Z_4^1(V)$, one has $6wa_4 - x^3a_4 \in \Quad(V)$.

So let $a_4 \in Z_4^1(V)$. Write $b_1 = wa_4$ and $q = 6 b_1 + x^3wb_1 \in \Quad(V)$.
Then:
\begin{align*}
x^2 wxa_4 &  = - x^2w\left(s-\frac{1}{6}x^3\right)wb_1\\
& = x^2cwcb_1 + x^2swsb_1 + \frac{1}{6}x^2wx^3wb_1\\
& = x^2wb_1 + \frac{1}{6}x^2w(q-6b_1)\\
& = x^2wb_1 - x^2 wb_1 = 0
\end{align*}
This shows $x a_4 \in Z_3^2(V)$. Moreover $x^4wx^3 a_4 = x^4w(6wa_4 - q) = 0$: hence $x a_4 \in \Quart'(V)$.

Now let $a_3 \in Z_3^2(V) \cap \ker(x^4wx^2)$. Then:
\begin{align*}
x^3wxa_3 & = - x^3wswwa_3 = x^3cwcwa_3 + x^3swswa_3 = x^3wwa_3 + x^4wswa_3\\
& = - x^4cwca_3 = - \frac{1}{2}x^4wx^2 a_3 = 0
\end{align*}
Moreover $x^4wx^2wxa_3 = x^4w(2c-2-\frac{1}{12}x^4)wxa_3 = 2 x^4cwsxa_3 = 2 x^4wx^2 a_3 = 0$, so $x a_3 \in \Quart'(V)$.

Finally let $a_2 \in Z_2^3(V) \cap \ker(x^4wx^2w)$. Then:
\[x^4wxa_2 = - x^4wswwa_2 = x^4cwcwa_2 = \frac{1}{2}x^4wx^2wa_2 = 0\]
and this shows $x a_2 \in \Quart'(V)$, which concludes the verification.
\end{proofclaim}

\begin{step}\label{st:-5:formules}
$x^4wx^4wx^4 = 0$ in $\End (V)$.
\end{step}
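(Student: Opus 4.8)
The plan is to sandwich the master relation~(\ref{e:-}) between two copies of $x^4$ and watch almost everything collapse. Since we are in the case $n=5$ we have $x^5=0$ in $\End V$ — hence $x^a=0$ for every $a\geq 5$ — so that $x^4u^{\pm1}=u^{\pm1}x^4=x^4c=cx^4=x^4$, while $x^4s=sx^4=0$ and $x^4x^4=0$. These are the only facts about the exponentials we shall use, and together with~(\ref{e:-}) they suffice.

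Left- and right-multiply~(\ref{e:-}) by $x^4$ and reduce each of the thirteen monomials on the right-hand side with the identities above. Seven of them vanish outright: the two with an $s$ adjacent to a framing $x^4$, namely $3cws$ and $3swc$; and the five that reduce to a monomial in which two positive powers of $x$ are directly multiplied, namely $3c$, $\frac{1}{2}x^3wu$, $\frac{1}{2}u^{-1}wx^3$, $\frac{1}{4}x^4wu$ and $\frac{1}{4}u^{-1}wx^4$. The remaining six survivors fall into three pairs: $3cw$ and $-3wc$ both reduce to $\pm 3\,x^4wx^4$ and cancel; $\frac{1}{2}uwx^3w$ and $-\frac{1}{2}wx^3wu^{-1}$ both reduce to $\pm\frac{1}{2}\,x^4wx^3wx^4$ and cancel; and $\frac{1}{4}wx^4wu^{-1}$ and $\frac{1}{4}uwx^4w$ both reduce to $\frac{1}{4}\,x^4wx^4wx^4$. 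What is left is $\frac{1}{2}x^4wx^4wx^4=0$ in $\End V$, and since $V$ is a $\Q$-vector space we conclude $x^4wx^4wx^4=0$.

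There is no genuine obstacle here: the whole difficulty is bookkeeping — keeping track of which of the thirteen monomials survive the double multiplication and with which signs — so this is a computation in the spirit of the rest of \S\ref{s:SL2:computation}. The identity is the length-$5$ counterpart of the vanishings obtained at smaller length (for instance $x^3wx^3=0$ in the case $n=4$, $i=1$, and compare Claim~\ref{st:-4:formules}); it is the first of the ``formules'' to be extracted from~(\ref{e:-}) and will be among the ingredients used to control $\Quart'(V)$ and, ultimately, to bring a suitable subquotient of $V$ into the range of Criterion~\ref{l:SL2:criterion2}. If one prefers, the same identity can be reached in two strokes: first right-multiply~(\ref{e:-}) by $x^4$ to obtain an auxiliary relation (itself reusable later), then left-multiply that relation by $x^4$.
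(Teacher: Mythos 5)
Your proof is correct and is exactly the paper's argument: the paper too obtains the identity by multiplying equation~(\ref{e:-}) on the left and on the right by $x^4$, arriving at $0=\frac{1}{2}x^4wx^4wx^4$ and dividing by $2$. Your term-by-term bookkeeping (the seven vanishing monomials, the two cancelling pairs, and the two $\frac{1}{4}$-terms adding up) is an accurate expansion of the one-line computation the paper leaves implicit.
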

\begin{proofclaim}
Multiply equation (\ref{e:-}) on the left and on the right by $x^4$, and find $0 =  \frac{1}{2}x^4wx^4w x^4$.
\end{proofclaim}

Let $V_1 = \Quart'(V)$. Since by Claim \ref{st:-5:formules} one has $x^4wx^4wx^4 = 0$ in $\End(V)$, one finds $x^4wx^4\cdot V \leq Z_1^4(V) \leq V_1$. Let $\simpleoverline{V} = V/V_1$ and let $\simpleoverline{\pi}$ be the projection map modulo $V_1$. Then $x^4wx^4 \cdot \simpleoverline{V} = 0$.

Let $\simpleoverline{V}_2 = \Quart'(\simpleoverline{V})$ and $V_2 = \simpleoverline{\pi}^{-1}(\simpleoverline{V}_2)$. Then $x^4\cdot \simpleoverline{V} \leq \simpleoverline{V_2}$ so $\doubleoverline{V} = V/V_2$ has length at most $4$.

By the case $i = -1$, $n = 4$, one can refine the series $0 \leq V_1 \leq V_2 \leq V$ into another one with the desired properties (we do not know whether powers $\Sym^k$ appear in non-decreasing order in the latter series).

\begin{localremarks*}\
\begin{itemize}
\item
There may be a formula similar to the one given in the final remark of case $n = 4$, $i = 1$ (\S\ref{s:+4}), but this exceeds our computational capacity.
\item
The author cannot answer the following: let $V$ have length $4$. What can one say about $V/\Quart'(V)$?
\end{itemize}
\end{localremarks*}

\subsubsection{Case $n = 5$, $i = 1$}

Suppose $n = 5$ and $i = 1$ in $\End (V)$.

\begin{step}\label{st:+5:Z14}
$Z_1^4(V) \leq Z_1^3(V)$.
\end{step}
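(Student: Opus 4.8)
The plan is to evaluate the master relation $(\ref{e:+})$ on an arbitrary $a_1\in Z_1^4(V)$ and exploit the massive cancellation that ensues. Write $b_1=wa_1$; by definition of $Z_1^4(V)$ one has $xa_1=0$ and $x^4b_1=0$. Since $n=5$ we have $s=x+\tfrac16x^3$ and $c=1+\tfrac12x^2+\tfrac1{24}x^4$, so $sa_1=0$, $ca_1=a_1$, and hence $u^{\pm1}a_1=a_1$; moreover each of the four terms of $(\ref{e:+})$ carrying a factor $x^4$ annihilates $a_1$, because $x^4a_1=0=x^4b_1$. The five ``low-degree'' terms $-3s-3ws-3sw+3cws+3swc$ contribute $-3sb_1+3sb_1=0$ on $a_1$, and among the four $x^3$-terms only three survive, so $(\ref{e:+})$ applied to $a_1$ reduces to
\[
0=\tfrac12\,x^3b_1+\tfrac12\,uwx^3b_1+\tfrac12\,wx^3b_1,
\qquad\text{equivalently}\qquad
x^3b_1+(u+1)\,w\,x^3b_1=0 .
\]

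Now set $c_1=x^3b_1=x^3wa_1$. Since $x\cdot c_1=x^4b_1=0$ we have $c_1\in Z_1(V)$, whence $(u-1)c_1=0$, so $(u+1)c_1=2c_1$ and $(u+1)^{-1}c_1=\tfrac12 c_1$ (recall $u+1=2+(u-1)$ is invertible in $\End V$). The displayed relation then reads $(u+1)\,wc_1=-c_1$, that is $wc_1=-\tfrac12 c_1$. Applying $w$ once more and using $w^2=i=1$ in $\End V$ gives $c_1=w(wc_1)=\tfrac14 c_1$, hence $\tfrac34 c_1=0$ and so $c_1=0$ since $V$ is a $\Q$-vector space. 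Thus $x^3wa_1=0$, i.e. $wa_1\in Z_3(V)$; writing $v=wa_1$ we get $a_1=wv\in w\cdot Z_3(V)$, so $a_1\in Z_1(V)\cap w\cdot Z_3(V)=Z_1^3(V)$, which is the claim.

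The only delicate point is the clerical work of evaluating $(\ref{e:+})$ on $a_1$: one must check that each of its thirteen monomials is disposed of by the appropriate one of the three facts $sa_1=0$, $u^{\pm1}a_1=a_1$, $x^4a_1=x^4b_1=0$. Conceptually there is no obstacle. The substantive observation is that the $i=1$ form of the master relation collapses all the way down to a relation of the shape $(u+1)\,wc_1=-c_1$ with $c_1\in\ker x$ — much further than its $i=-1$ counterpart does in Claim \ref{st:-4:q} — and such a relation is rigid enough, via the central involution, to force $c_1=0$.
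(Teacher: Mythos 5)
Your proof is correct and is essentially the paper's: you evaluate (\ref{e:+}) on $a_1\in Z_1^4(V)$, obtain $x^3wa_1+(u+1)wx^3wa_1=0$, use that $x^3wa_1\in\ker x$ so that $u+1$ acts as $2$ on it, and then exploit $w^2=i=1$ (the paper phrases this as left-invertibility of $\tfrac12+w$) to force $x^3wa_1=0$.
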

\begin{proofclaim}
Apply equation (\ref{e:+}) to $a_1 \in Z_1^4(V)$ and find:
\[0 = \frac{1}{2}x^3wa_1 + \frac{1}{2}uwx^3wa_1 + \frac{1}{2}wx^3wa_1\]
or, $x^3 w a_1 + (u+1)wx^3wa_1 = 0$. Dividing on the left by $u+1$, one gets $\frac{1}{2}x^3 wa_1 + wx^3wa_1 = 0$; since $\frac{1}{2}+w$ is left-inversible in $\End(V)$, we find $x^3 wa_1 = 0$ as claimed.
\end{proofclaim}


\begin{step}\label{st:+5:formula}
$(24x^3wx^4 - x^3wx^4wx^4) \cdot V \leq \Cub(V)$.
\end{step}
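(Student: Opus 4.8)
The plan is to derive the stated inclusion from the general formula $(E_+)$ — available here since $i = 1$ in $\End V$ — specialised to length $5$, where $c = 1 + \frac{1}{2}x^2 + \frac{1}{24}x^4$, $s = x + \frac{1}{6}x^3$ and $x^5 = 0$. The length-$5$ identities I shall lean on are $(c-1)^2 = \frac{1}{4}x^4$ (so $x^4 = 4(c-1)^2$), $c^2 = 1 + s^2 = 1 + x^2 + \frac{1}{3}x^4$, $cs = s + \frac{1}{2}x^3$, together with $cx^4 = ux^4 = u^{-1}x^4 = x^4$ and $sx^4 = 0$. Combined with the Relations $wcw = cwc + sws$ and $wsw = -cws - swc$, these let one expel the central $w$'s of a word such as $wx^4w$: indeed $wx^4w = 4\,w(c-1)^2w = 4(wcw - 1)^2 = 4(cwc + sws - 1)^2$.

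The first step is a purely Steinberg-theoretic identity: writing $x^3wx^4wx^4 = 4\,x^3(wx^4w)x^4 = 4\,x^3(cwc + sws - 1)^2x^4$, expanding, and collapsing with $x^5 = 0$, $x^3c = x^3$, $x^3s = x^4$ and $x^4s = 0$ (left-multiplication by $x^3$ kills some summands, and the $x^3wx^4wx^4$-term that reappears is moved back to the left), one reaches
\[
24\,x^3wx^4 - x^3wx^4wx^4 = 12\,x^3wx^2wx^4 + 6\,x^4wx^3wx^4
\]
in $\End V$, with no appeal to $(E_+)$ yet.

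The second step uses $(E_+)$ to dispose of the last term. Multiplying $(E_+)$ on the right by $x^4$ collapses almost everything ($sx^4 = 0$ and $cx^4 = ux^4 = u^{-1}x^4 = x^4$) and yields, after clearing denominators,
\[
0 = 2\,x^3wx^4 + 2(u+1)wx^3wx^4 + x^4wx^4 + (u-1)wx^4wx^4 .
\]
Multiplying this on the left by $x^4$ kills all but one term, since $x^4u = x^4$ forces $x^4(u+1) = 2x^4$ and $x^4(u-1) = 0$; there remains $0 = 4\,x^4wx^3wx^4$, hence $x^4wx^3wx^4 = 0$ in $\End V$. Feeding this back into the first-step identity gives $24\,x^3wx^4 - x^3wx^4wx^4 = 12\,x^3wx^2wx^4$.

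The last step — where essentially all the effort lies — is to see that $x^3wx^2wx^4 \cdot V \leq \Cub(V)$. Since $x^2\cdot x^3 = x^5 = 0$, the image sits in $Z_2(V)$; and since $\Cub(V)$ is already known to be $G_0$-invariant and contains both $Z_2^2(V)$ and $\Quad(V) = Z_1^2(V) + Z_2^1(V)$, it is enough to control the effect of $w$ on that image — ideally to prove $x^2wx^3wx^2wx^4 = 0$ in $\End V$, which would land the image inside $Z_2^2(V)$, or $xwx^3wx^2wx^4 = 0$, which would land it inside $Z_2^1(V) \leq \Quad(V)$; failing an outright vanishing, one would decompose the image as a sum of pieces each lying in one of $Z_1^3(V)$, $Z_2^2(V)$, $Z_3^1(V)$. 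This is the main obstacle: one must once again push central $w$'s through the Relations, collapse with $x^5 = 0$, and inject the consequences of $(E_+)$ — $x^4wx^3wx^4 = 0$ together with $x^4wx^4wx^4 = -4\,x^3wx^3wx^4$ and $x^3wx^4wx^4 = -4\,x^2wx^3wx^4$, obtained by left-multiplying the relation $0 = 2\,x^3wx^4 + 2(u+1)wx^3wx^4 + x^4wx^4 + (u-1)wx^4wx^4$ by $x^4$, by $x^3$ and by $x^2$ in turn and combining — until the dust settles. The computation is long and, as the author warns, ``completely void of ideas''; what little there is to be careful about is the bookkeeping: tracking, for each remainder term, which iterated power of $x$ and which flanking $w$ kill it, so as to place it into the appropriate $Z_j^k(V)$ inside $\Cub(V)$.
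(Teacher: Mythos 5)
Your first two steps are sound and I could verify them: expanding $wx^4w = 4(cwc+sws-1)^2$ with $x^3c=x^3$, $x^3s=x^4$, $cx^4=x^4$, $sx^4=x^4s=0$ does give $24x^3wx^4 - x^3wx^4wx^4 = 12\,x^3wx^2wx^4 + 6\,x^4wx^3wx^4$, and right-multiplying $(E_+)$ by $x^4$ and then left-multiplying by $x^4$ does give $x^4wx^3wx^4 = 0$ (your further relations $x^4wx^4wx^4 = -4x^3wx^3wx^4$ and $x^3wx^4wx^4=-4x^2wx^3wx^4$ check out too). But the proof stops exactly where the content begins. Since your identity says $24x^3wx^4 - x^3wx^4wx^4 = 12\,x^3wx^2wx^4$ in $\End V$, the ``remaining obstacle'' $x^3wx^2wx^4\cdot V \leq \Cub(V)$ is not a smaller residual computation: it is literally equivalent to the statement being proved. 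The paragraph describing how one \emph{would} finish (``ideally prove $x^2wx^3wx^2wx^4=0$, or $xwx^3wx^2wx^4=0$, failing which decompose the image among $Z_1^3(V)$, $Z_2^2(V)$, $Z_3^1(V)$'') is a wish list, not an argument; and there is a structural reason your toolkit cannot deliver it as described: membership in $\Cub(V)$ requires placing the image inside some $w\cdot Z_j(V)$, i.e.\ genuine information about the action of $w$ on the image, whereas left-multiplying by powers of $x$ and expelling central $w$'s only ever constrains the $x$-side.

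The paper gets the missing $w$-information by a different manipulation of the same starting relation: after right-multiplying $(E_+)$ by $x^4$ it left-multiplies by $\frac{4}{u+1} = 2 - x + \frac{1}{12}x^3$ to obtain $0 = x^3wx^4 + 2wx^3wx^4 + \frac{1}{2}xwx^4wx^4 - \frac{1}{24}x^3wx^4wx^4$, and then left-multiplies by $(1+w)$; the key collapse $(1+w)xwx^4wx^4 = -\frac{1}{6}(1+w)x^3wx^4wx^4$ (from $wsw=-cws-swc$ and $sx^4=0$) reduces everything to $(1+w)\bigl(24x^3wx^4 - x^3wx^4wx^4\bigr) = 0$. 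So the image of $24x^3wx^4 - x^3wx^4wx^4$ lies in the $(-1)$-eigenspace of $w$, and being killed by $x^2$ it lies in $Z_2(V)\cap w\cdot Z_2(V) = Z_2^2(V)\leq \Cub(V)$. Some such eigenvalue (or $w\cdot Z_j$) mechanism is indispensable, and your proposal never produces one; as written it is a correct but non-essential reformulation of the claim, with the actual proof missing.
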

\begin{proofclaim}
Multiply equation (\ref{e:+}) on the right by $x^4$:
\begin{align*}
0 & = \frac{1}{2} x^3 w x^4 + \frac{1}{2}uwx^3w x^4 + \frac{1}{2}wx^3w x^4 + \frac{1}{4} x^4w x^4 + \frac{1}{4}uwx^4w x^4 \\
& \quad - \frac{1}{4}wx^4w x^4\\
& = \frac{1}{2} x^3 w x^4 + \frac{1}{2}(u+1)wx^3w x^4 + \frac{1}{4}x^4w x^4 + \frac{1}{4}(u-1)wx^4w x^4
\end{align*}
Multiply on the left by $\frac{4}{u+1} = 2 - x + \frac{1}{12}x^3$:
\begin{align*}\label{e:+5}
0 & = x^3w x^4 - \frac{1}{2} x^4w x^4 + 2 wx^3w x^4 + \frac{1}{2} x^4w x^4 + \frac{u-1}{u+1} wx^4w x^4\\
& = x^3w x^4 + 2 wx^3w x^4 + \tanh\left(\frac{x}{2}\right) wx^4w x^4\\
& = x^3w x^4 + 2 wx^3w x^4 + \frac{1}{2}xwx^4w x^4 - \frac{1}{24}x^3wx^4w x^4\tag{$E_{+5}$}
\end{align*}
Multiply the latter (which we shall use again later) on the left by $(1+w)$:
\begin{align*}
0 & = 3(1+w) x^3w x^4 + \frac{1}{2}(1+w)xwx^4w x^4 - \frac{1}{24}(1+w)x^3wx^4w x^4
\end{align*}
Incidently:
\begin{align*}
(1+w)xwx^4w x^4 & = \left(xwx^4w + w\left(s-\frac{1}{6}x^3\right)wx^4w\right)  x^4\\
& = \left(xwx^4w- swcx^4w- cwsx^4w-\frac{1}{6}wx^3wx^4w\right) x^4\\
& = - \frac{1}{6} (1+w)x^3wx^4w x^4
\end{align*}
So our computation simplifies into:
\[0 =  3(1+w) x^3w x^4 - \frac{1}{8}(1+w)x^3wx^4w x^4\]
Hence $\im (24 x^3 wx^4 - x^3wx^4wx^4) \leq [V, w] \cap \ker x^2 \leq Z_2^2(V) \leq \Cub(V)$.
\end{proofclaim}

\begin{localnotation*}\
\begin{itemize}
\item
Let $V_1 = \Cub(V)$, $\simpleoverline{\pi}$ be the projection map modulo $V_1$, and $\simpleoverline{V} = V/V_1$.
\item
Let $\simpleoverline{V}_2 = \Cub(\simpleoverline{V})$, $V_2 = \simpleoverline{\pi}^{-1}(\simpleoverline{V}_2)$, $\doubleoverline{\pi}$ be the projection map modulo $V_2$, and $\doubleoverline{V} = V/V_2 \simeq  \simpleoverline{V}/\simpleoverline{V}_2$.
\end{itemize}
\end{localnotation*}


\begin{step}\label{st:+5:formula2}
In $\End \doubleoverline{V}$, one has $x^4wx^4 = 24x^4$, $wx^2wx^4 = x^2wx^4$ and $wx^3w x^4 = -6xwx^4$.
\end{step}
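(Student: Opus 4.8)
The plan is to pass to the quotient $\doubleoverline{V} = V/V_2$, on which the length drops to at most $4$, and to push the general formula (\ref{e:+}) — in its form after specialising to $n=5$, $i=1$ — through the vanishing of $x^4$ on the \emph{next} layer. Concretely, Claim \ref{st:+5:formula} says $(24x^3wx^4 - x^3wx^4wx^4)\cdot V \leq \Cub(V) = V_1$, and the same computation applied inside $\simpleoverline V = V/V_1$ gives $(24x^3wx^4 - x^3wx^4wx^4)\cdot \simpleoverline V \leq \Cub(\simpleoverline V) = \simpleoverline V_2$; hence in $\End\doubleoverline V$ one has $24x^3wx^4 = x^3wx^4wx^4$. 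First I would establish the cleaner relation $x^4wx^4 = 24x^4$ in $\End\doubleoverline V$: multiply (\ref{e:+}) on the right by $2x^4$ to kill the $s$, $ws$, $sw$, $cws$, $swc$ and $x^3$-terms (since $x^5=0$ on $\doubleoverline V$, actually $x^4$-terms survive only where explicitly quartic), obtaining something of the shape $x^4wx^4 + (u+1)wx^4wx^4 = 0$ up to the precise bookkeeping; dividing on the left by the invertible $u+1$ and then by the left-invertible $\tfrac12 + w$ — exactly as in the proofs of Claims \ref{st:+5:Z14} and \ref{st:+5:formula} — isolates $x^4wx^4 = 24x^4$. Then $x^3wx^4wx^4 = x^3(x^4wx^4)$ would want to be read the wrong way; instead substitute $x^4wx^4 = 24x^4$ into $24x^3wx^4 = x^3wx^4wx^4 = x^3w(x^4wx^4)$, which is circular, so the honest route is: from $x^4wx^4 = 24x^4$ multiply on the left by a suitable power of $x$ and conjugate by $w$ to get the two companion relations.

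The cleanest derivation of the remaining two identities uses the Relations for $c$ and $s$ with $i=1$, i.e. $wcw = cwc + sws$ and $wsw = -cws - swc$, together with $x^4wx^4 = 24x^4$ now in hand. For $wx^2wx^4$: write $x^2 = 2(c-1)$ on $\doubleoverline V$, so $wx^2wx^4 = 2(wcwx^4 - wx^4) = 2(cwc + sws)x^4 - 2wx^4 = 2cwcx^4 + 2swsx^4 - 2wx^4$; expanding $c = 1 + \tfrac12 x^2$, $s = x + \tfrac16 x^3$ and using $x^5 = 0$ plus $x^4wx^4 = 24x^4$ collapses the right-hand side to $x^2wx^4$ after collecting the $x^4$-coefficients — this is the same bookkeeping already performed in Claim \ref{st:-4:formules} of case $n=4$, $i=-1$, only with the $i=1$ Relations. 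For $wx^3wx^4$: start from $wswx^4 = -(cws + swc)x^4$, expand $s = x + \tfrac16 x^3$, isolate $wx^3wx^4$, and simplify the $cws\,x^4$, $swc\,x^4$ terms using $x^4wx^4 = 24x^4$; the $x$-leading term survives as $-6xwx^4$ and all higher terms die on the length-$4$ module. Alternatively, and perhaps more in the spirit of the section, one re-derives these by multiplying (\ref{e:+}) on the right by $x^4$ (giving equation (\ref{e:+5}) in the proof of Claim \ref{st:+5:formula}) and reading it modulo $V_2$: there $\Cub(\doubleoverline V)$-terms vanish only if $\doubleoverline V$ itself is already cubic, which it is not, so one instead multiplies (\ref{e:+5}) by appropriate powers of $x$ to peel off the relations one at a time.

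The main obstacle I anticipate is \emph{not} the algebra of divisions — those are routine, since $u\pm 1$ and $\tfrac12 + w$ are (left-)invertible exactly as used repeatedly above — but rather making sure each claimed relation genuinely holds on $\doubleoverline V$ and not merely modulo some further layer. The subtle point, visible already in case $n=4$, $i=-1$ where the author could apply Criterion \ref{l:SL2:criterion2} only after quotienting \emph{twice}, is that Claim \ref{st:+5:formula} outputs a containment in $\Cub(V)$, and iterating it once gives containment in $\Cub(\simpleoverline V) = \simpleoverline V_2 = V_2/V_1$; one must check that this is exactly the right number of iterations so that the \emph{equalities} $x^4wx^4 = 24x^4$ etc. hold in $\End\doubleoverline V = \End(V/V_2)$ with no residual error term. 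I would verify this by tracking, for each of the three relations, that the "defect" element lands in a module which is cubic-by-cubic (hence killed by the double quotient), using that $V_1 = \Cub(V)$ and $V_2/V_1 = \Cub(\simpleoverline V)$ are cubic by the case $n=3$ analysis and that Claim \ref{st:+5:Z14} controls $Z_1^4$. Once the three relations are in place the statement is proved; they are precisely the input needed to feed $\doubleoverline V$ into Criterion \ref{l:SL2:criterion2} with $n = 5$, which is presumably the next step of the paper.
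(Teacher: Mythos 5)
Your overall plan (feed Claim \ref{st:+5:formula} into the double quotient, get $x^4wx^4=24x^4$ first, then the two companion identities) is the right shape, but the derivation of the key relation $x^4wx^4 = 24x^4$ is broken. Right-multiplying (\ref{e:+}) by $2x^4$ does \emph{not} kill the cubic terms: of the four terms in $x^3$, only $u^{-1}wx^3\cdot x^4$ dies, and what remains is exactly $x^3wx^4 + (u+1)wx^3wx^4 + \tfrac12 x^4wx^4 + \tfrac12(u-1)wx^4wx^4 = 0$, i.e.\ the equation already exploited in Claim \ref{st:+5:formula} --- nothing of the shape $x^4wx^4 + (u+1)wx^4wx^4 = 0$. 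Moreover, even if one had such a homogeneous relation, dividing by $u+1$ and by $\tfrac12+w$ would yield $x^4wx^4 = 0$, not $x^4wx^4 = 24x^4$ (and $x^4wx^4=0$ is false on $\Sym_\Q^4\Nat G_0$): the constant $24$ cannot come from that manoeuvre. The missing step is the one the paper makes: since $(24x^3wx^4 - x^3wx^4wx^4)\cdot V \leq V_1$, the relation $24x^3wx^4 = x^3wx^4wx^4$ holds already in $\End\simpleoverline{V}$ (one quotient, not two); hence the operator $A = 24x^4 - x^4wx^4$ satisfies $xA = 0$ and $x^3wA = 0$ on $\simpleoverline{V}$, so $A\cdot\simpleoverline{V} \leq Z_1^3(\simpleoverline{V}) \leq \Cub(\simpleoverline{V}) = \simpleoverline{V}_2$, and therefore $A = 0$ in $\End\doubleoverline{V}$. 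Your bookkeeping records the cubic relation only in $\End\doubleoverline{V}$; applied there, this argument would only place the defect in $\Cub(\doubleoverline{V}) = \doubleoverline{V}_3$, i.e.\ give the equality one quotient too late --- exactly the trap you flag at the end but do not actually resolve.

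The companion identities also lean on a false premise: $\doubleoverline{V}$ does \emph{not} have length at most $4$ (for $V = \Sym_\Q^4\Nat G_0$ one has $\Cub(V) = 0$, so $\doubleoverline{V} = V$ has length $5$); if the length did drop, all three relations would read $0 = 0$ and there would be nothing left for Criterion \ref{l:SL2:criterion2} with $n = 5$ to identify. Consequently you may not write $x^2 = 2(c-1)$ (the $\tfrac1{24}x^4$ term of $c$ matters, since $x^4 \neq 0$ on $\doubleoverline{V}$), nor let ``higher terms die''; and the identity you extract from $wsw = -cws - swc$, namely $wswx^4 = -swx^4$, involves both $wxwx^4$ and $wx^3wx^4$ and cannot isolate the latter on its own. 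Once $x^4wx^4 = 24x^4$ is in hand the clean route is the paper's: expand $wx^2wx^4 = w(2c-2-\tfrac1{12}x^4)wx^4$ using $wcw = cwc + sws$ and substitute $24x^4$ for $x^4wx^4$ to get $wx^2wx^4 = x^2wx^4$; and substitute the same relation into (\ref{e:+5}), where the $x^3wx^4$ terms cancel and $wx^3wx^4 = -6xwx^4$ falls out.
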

\begin{proofclaim}
By Claim \ref{st:+5:formula}, $(24 x^3 wx^4 - x^3wx^4wx^4) \cdot V \leq V_1$, so that $(24 x^3 wx^4 - x^3wx^4wx^4)\cdot \simpleoverline{V} = 0$. 
It follows that $(24x^4 - x^4wx^4) \cdot \simpleoverline{V} \leq Z_1^3(\simpleoverline{V}) \leq \Cub(\simpleoverline{V}) = \simpleoverline{V}_2$. In particular, in $\End(\doubleoverline{V})$, one finds $24x^4 = x^4wx^4$.

Still in $\End(\doubleoverline{V})$, this implies:
\begin{align*}
wx^2wx^4 & = w\left(2c-2-\frac{1}{12}x^4\right)wx^4\\
& = 2 cwx^4 - 2 x^4 - 2 wx^4\\
& = 2 wx^4 + x^2wx^4 + \frac{1}{12}x^4wx^4 - 2 x^4 - 2 wx^4\\
& = x^2wx^4
\end{align*}
And finally, always in $\End(\doubleoverline{V})$ and using equation (\ref{e:+5}) proved in Claim \ref{st:+5:formula}:
\begin{align*}
0 & = x^3w x^4 + 2 wx^3w x^4 + \frac{1}{2}xwx^4w x^4 - \frac{1}{24}x^3wx^4w x^4\\
& = x^3w x^4 + 2 wx^3w x^4 + 12 xwx^4 - x^3wx^4
\end{align*}
so that $wx^3wx^4 = - 6 x wx^4$. All is proved.
\end{proofclaim}

Like in \S\ref{s:-4} this is not quite enough to conclude, as one must control $Z_1^4(\doubleoverline{V})$.

\begin{localnotation*}
Let $\doubleoverline{V}_3 = \Cub(\doubleoverline{V})$, $V_3 = \doubleoverline{\pi}^{-1}(\doubleoverline{V}_3)$, $\tripleoverline{\pi}$ be the projection map modulo $V_3$, and $\tripleoverline{V} = V/V_3 \simeq  \doubleoverline{V}/\doubleoverline{V}_3$; also define $\tripleoverline{V}_4 = \Cub(\tripleoverline{V})$, $V_4 = \tripleoverline{\pi}^{-1}(\tripleoverline{V}_4)$, $\quadrupleoverline{\pi}$ be the projection map modulo $V_4$, and $\quadrupleoverline{V} = V/V_4 \simeq  \tripleoverline{V}/\tripleoverline{V}_4$.
\end{localnotation*}

\begin{step}
$Z_1^4(\quadrupleoverline{V}) = 0$.
\end{step}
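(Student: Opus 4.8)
The plan is to carry the argument of \S\ref{s:-4} one rung higher: build one more cubic layer on top of $V_4$ and show that it collapses. First, applying Claim~\ref{st:+5:Z14} to $\quadrupleoverline{V}$ (a module of length $\leq 5$ on which $i = 1$) gives $Z_1^4(\quadrupleoverline{V}) \leq Z_1^3(\quadrupleoverline{V}) \leq \Cub(\quadrupleoverline{V})$, so it suffices to prove $\Cub(\quadrupleoverline{V}) = 0$. Equivalently, setting $V_5 = \quadrupleoverline{\pi}^{-1}(\Cub(\quadrupleoverline{V}))$, one must show $V_5 = V_4$; here $0 \leq V_1 \leq V_2 \leq V_3 \leq V_4 \leq V_5$ is a chain of $G_0$-submodules all of whose successive quotients are cubic.

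The engine is the layer-dropping principle used throughout this section: if a module lies in a chain $0 = W_0 \leq \cdots \leq W_m$ of $G_0$-submodules with cubic quotients, then $x^a W_j \leq W_{j-1}$ for every $a \geq 3$, and each $W_j$ is $w$-invariant, so a word $x^{a_1} w x^{a_2} w \cdots w x^{a_r}$ all of whose exponents are $\geq 3$ carries $W_m$ into $W_{m-r}$. I would then left-multiply the relation $(24\, x^3 w x^4 - x^3 w x^4 w x^4)\cdot V \leq \Cub(V) = V_1$ of Claim~\ref{st:+5:formula} by $x^3 w$; since $V_1$ is cubic and $w$-invariant, $x^3 w V_1 = x^3 V_1 = 0$, so this yields $24\, x^3 w x^3 w x^4 = x^3 w x^3 w x^4 w x^4$ in $\End V$. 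Evaluating on $V_5$, which sits in a five-step filtration with cubic quotients, the right-hand word has four $x$-powers $\geq 3$ and so lands in $V_1$; hence $x^3 w x^3 w x^4 \cdot V_5 \leq V_1$, i.e.\ $x^3 w x^3 w x^4 = 0$ in $\End(V_5/V_1)$, a fortiori in $\End(V_5/V_2)$. Now $V_5/V_2$ is a submodule of $\doubleoverline{V} = V/V_2$, where the three identities of Claim~\ref{st:+5:formula2} hold; substituting $w x^3 w x^4 = -6\, x w x^4$ and then $x^4 w x^4 = 24\, x^4$ gives $0 = x^3(w x^3 w x^4) = -6\, x^4 w x^4 = -144\, x^4$ on $V_5/V_2$, so $x^4 = 0$ there and $V_5/V_2$ has length $\leq 4$.

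To conclude I would invoke the already-settled case $n = 4$, $i = 1$ of \S\ref{s:+4}: since $V_5/V_2$ has length $\leq 4$ and $i = 1$, the quotient $(V_5/V_2)/\Cub(V_5/V_2)$ is cubic. As $\Cub$ is monotone for submodules (one has $Z_j^k(N) = Z_j^k(M) \cap N$ when $N \leq M$) and $\Cub(\doubleoverline{V}) = \doubleoverline{V}_3 = V_3/V_2$, one has $\Cub(V_5/V_2) \leq V_3/V_2$, so $V_5/V_3 = (V_5/V_2)/(V_3/V_2)$ is a quotient of a cubic module, hence cubic. Thus $V_5/V_3$ is a cubic submodule of $\tripleoverline{V} = V/V_3$; since every cubic $\Q[G_0]$-module is a direct sum of copies of the $\Sym_\Q^k \Nat G_0$ with $k \leq 2$ (the cases $n \leq 3$, already treated), one has $\Cub(V_5/V_3) = V_5/V_3$, and monotonicity of $\Cub$ gives $V_5/V_3 \leq \Cub(\tripleoverline{V}) = \tripleoverline{V}_4 = V_4/V_3$. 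Therefore $V_5 \leq V_4$, so $V_5 = V_4$ and $\Cub(\quadrupleoverline{V}) = 0$, as wanted.

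The hard part is the middle paragraph, and within it the choice of the multiplier $x^3 w$: it is the shortest word for which a single pass of layer-dropping on the five-step module $V_5$ together with one substitution from Claim~\ref{st:+5:formula2} makes a bare $x^4$ fall out over $V_5/V_2$. Everything downstream --- ``length $\leq 4$ is cubic-by-cubic'', monotonicity of $\Cub$, splitting of cubic modules --- is formal. It is also worth recording at the outset that the three identities of Claim~\ref{st:+5:formula2} are precisely the hypotheses of Criterion~\ref{l:SL2:criterion1} for $n = 5$, so that once $Z_1^4(\quadrupleoverline{V}) = 0$ is in hand, Criterion~\ref{l:SL2:criterion2} applies to $\quadrupleoverline{V}$ and closes the case $n = 5$, $i = 1$.
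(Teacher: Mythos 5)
Your proof is correct and follows the paper's own route: reduce via Claim \ref{st:+5:Z14} to showing $\Cub(\quadrupleoverline{V}) = 0$, i.e.\ $V_5 = V_4$, prove that $V_5/V_2$ has length $\leq 4$, and then invoke the case $n=4$, $i=1$ together with monotonicity of $\Cub$. The only (harmless) detour is your extraction of $x^4 = 0$ on $V_5/V_2$ by multiplying Claim \ref{st:+5:formula} by $x^3w$; the paper gets it in one line by noting that $V_5/V_2$ is cubic-by-cubic-by-cubic, so $0 = x^4wx^4wx^4 = 24^2\, x^4$ there by Claim \ref{st:+5:formula2} applied twice.
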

\begin{proofclaim}
Let $\quadrupleoverline{V}_5 = \Cub(\quadrupleoverline{V})$ and $V_5 = \quadrupleoverline{\pi}^{-1}(\quadrupleoverline{V}_5)$.

Now $\doubleoverline{V}_5 = V_5/V_2$ is a cubic-by-cubic-by-cubic module. But $\doubleoverline{V_5} \leq V/V_2 = \doubleoverline{V}$ also satisfies $x^4wx^4 = 24x^4$ by Claim \ref{st:+5:formula2}. Hence in $\End \doubleoverline{V}_5$, one has $0 = x^4wx^4wx^4 = 24^2 x^4$ and $\doubleoverline{V}_5$ is actually a quartic module. By the case $n = 4, i=1$ we know that it is actually cubic-by-cubic. Hence $\doubleoverline{V}_5 \leq \doubleoverline{V}_4$ and $V_5 = V_4$.

As a consequence, by Claim \ref{st:+5:Z14}, $Z_1^4(\quadrupleoverline{V}) \leq Z_1^3(\quadrupleoverline{V}) \leq \Cub(\quadrupleoverline{V}) = 0$.
\end{proofclaim}

One may therefore apply Criterion \ref{l:SL2:criterion2} to the action of $G_0$ on $\quadrupleoverline{V} \simeq V/V_4$. Finally the series $0 \leq V_1 \leq V_2 \leq V_3 \leq V_4 \leq $ enjoys the following properties:
\begin{itemize}
\item
$V_1$, $V_2/V_1$, $V_3/V_2$, and $V_4/V_3$ are cubic hence known;
\item
$V/V_4$ is isomorphic to a direct sum of copies of $\Sym^4 \Nat$.
\end{itemize}

We are done. \emph{End of the proof of Theorem \ref{t:SL2:series}.}
\end{proof}

\subsection{A Geometric Interpretation}\label{s:SL2:geometry}


The arguments in \S\ref{s:SL2:geometry} are all due to M. Wolff (in personal communication).


\subsubsection{Short length}\label{s:SL2:trees}

In order to prove Theorem \ref{t:SL2:series} we followed the most naive path: we built consecutive subquotients of $V$ in which we could determine the collection of words in $x$ and $w$. So the proof can provide explicit (additive) generators of the subalgebra $\<\SL_2(\Z)\> \leq \End V$.
Forgetting about $V$, this amounts in a sense to trying to bound the number of additive generators of the quotient $\Z[\SL_2(\Z)]/((u-1)^n)$ of the group ring by the ideal generated by $(u-1)^n$.
Theorem \ref{t:SL2:series} (or more precisely its proof since the statement was over $\Q$) has the following immediate consequence.

\begin{proposition}\label{p:ZSL2/Itf}
For $n \leq 5$, $\Z[\SL_2(\Z)]/((u-1)^n)$ is a finitely generated $\Z$-module.
\end{proposition}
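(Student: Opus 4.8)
The plan is to read the statement off the \emph{proof} of Theorem~\ref{t:SL2:series}, not off its conclusion: the theorem as stated only controls the isomorphism types of the composition factors, whereas the computation of \S\ref{s:SL2:computation} is constructive and, in each of its cases, expresses the module through the idempotents $\pi_k$ of Criterion~\ref{l:SL2:criterion1} and the boundedly many ``divided words'' in $u$ and $w$ that occur along the way. Write $R = \Z[\SL_2(\Z)]/((u-1)^n)$, $n \le 5$. The first step is to show that $R\otimes_\Z\Q = \Q[\SL_2(\Z)]/((u-1)^n)$ is finite-dimensional over $\Q$: run the proof of Theorem~\ref{t:SL2:series} on the module $V = R\otimes\Q$ itself, of $U$-length $\le n \le 5$. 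The feature not available for a general module of length $\le n$, but available here, is that $V$ is \emph{cyclic}, hence cannot be an infinite direct sum of non-zero submodules (its generator has finite support): so each ``$\bigoplus_I \Sym^k$'' produced along the way has a finite index set, and one extracts a finite set of $\Q$-generators of $V$. (One routine but genuine point must be checked: that the generators produced at the top of each stage of the filtration propagate downward — that one never passes through a submodule that fails to be finitely generated — and this is where the explicit shape of \S\ref{s:SL2:computation}, not merely Theorem~\ref{t:SL2:series}, is used.)

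Granting $\dim_\Q(R\otimes\Q)<\infty$, I would then localise. Running the same computation over $\Z[\tfrac1{5!}]$ rather than $\Q$, which is legitimate by the final remark of \S\ref{s:SL2:computation}, and again invoking cyclicity, one gets that $R\otimes\Z[\tfrac1{5!}]$ is a finitely generated $\Z[\tfrac1{5!}]$-module. Since $R \hookrightarrow R\otimes\Z[\tfrac1{5!}]$, there is a finitely generated $\Z$-submodule $R_0 \le R$ with $R_0\otimes\Z[\tfrac1{5!}] = R\otimes\Z[\tfrac1{5!}]$; then $R/R_0$ is torsion, supported on the primes dividing $5!=120$, i.e.\ on $\{2,3,5\}$. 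So it is enough to show, for $p\in\{2,3,5\}$, that $R_{(p)} := R\otimes\Z_{(p)}$ is finitely generated over the discrete valuation ring $\Z_{(p)}$: then each $(R/R_0)_{(p)}$ is a finite-length, hence finite, torsion $\Z_{(p)}$-module, $R/R_0$ is finite, and $R$ is a finitely generated $\Z$-module.

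For $p=5$ this is harmless: the denominators that occur in \S\ref{s:SL2:computation} for $n \le 5$ all divide a power of $24 = 2^3\cdot3$ and never involve $5$, so the computation runs over $\Z[\tfrac16]\subseteq\Z_{(5)}$ and, once more by cyclicity, yields $R_{(5)}$ finitely generated over $\Z_{(5)}$. A cheaper argument disposes of $p=3$ for $n\le3$ and $p=2$ for $n\le4$: over $\F_p$ one has $(u-1)^{p^k}=u^{p^k}-1$, so as soon as $p^k\ge n$ the relation forces $u^{p^k}=1$ and kills the normal closure of $u^{p^k}$ in $\SL_2(\Z)$; the surviving quotient group is a $(2,3,p^k)$ triangle group, finite because $p^k\le5$, so $R/pR$ is a quotient of a finite group algebra, and one then passes to $R_{(p)}$ by the usual $p$-adic completion argument, using the first step to rule out a $p$-divisible part. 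This leaves exactly the three pairs $(p,n)\in\{(2,5),(3,4),(3,5)\}$, where the minimal $p^k\ge n$ is $8$ or $9$ and the triangle group is infinite; for these one has no shortcut and must prove the analogue of Theorem~\ref{t:SL2:series} over $\Z_{(2)}$, resp.\ $\Z_{(3)}$, by redoing the relevant parts of \S\ref{s:SL2:computation} with $u-1$ in place of $x=\log u$ and clearing each denominator against $(u-1)^n=0$.

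The main obstacle is precisely that last task. The computation as written divides by $2$ and $3$ and repeatedly inverts $u+1$, which modulo $2$ equals the nilpotent $u-1$, so it simply does not survive reduction mod $2$ or mod $3$; and the three offending pairs are exactly those for which the quotient is not visibly the group algebra of a finite group. Checking that, after clearing denominators, these cases still yield a finite spanning set — equivalently, that $\F_2[\SL_2(\Z)]/((u-1)^5)$ and $\F_3[\SL_2(\Z)]/((u-1)^4)$ are finite-dimensional — is where the real work lies; the remaining ingredients (extracting the constructive content of Theorem~\ref{t:SL2:series}, the cyclicity collapse of index sets, and the local-to-global gluing) are bookkeeping.
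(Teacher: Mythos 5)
Your reduction does not close: you yourself isolate the pairs $(p,n)\in\{(2,5),(3,4),(3,5)\}$ and concede that proving finite generation there --- equivalently, that $\F_2[\SL_2(\Z)]/((u-1)^5)$, $\F_3[\SL_2(\Z)]/((u-1)^4)$ and $\F_3[\SL_2(\Z)]/((u-1)^5)$ are finite-dimensional --- is ``where the real work lies''. But that is exactly the integral content of Proposition \ref{p:ZSL2/Itf} which is not already contained in the rational Theorem \ref{t:SL2:series}, and, as you correctly observe, the computation of \S\ref{s:SL2:computation} cannot be run at those primes (it divides by $2$ and $3$ and inverts $u+1=2+(u-1)$), while the triangle-group shortcut gives the infinite groups $(2,3,8)$ and $(2,3,9)$. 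So what your proposal leaves open is not bookkeeping but the heart of the matter, and no method for it is indicated. A second unsupported step is the ``routine but genuine point'' you defer even in the rational stage: the filtration terms produced in \S\ref{s:SL2:computation} ($\Quad$, $\Cub$, $\Quart'$ and their preimages) are sums of kernels, and a submodule of a cyclic $\Q[\SL_2(\Z)]$-module need not be finitely generated; neither the statement of Theorem \ref{t:SL2:series} nor the idempotents of Criterion \ref{l:SL2:criterion1} make the ``downward propagation'' automatic, so even $\dim_\Q R\otimes\Q<\infty$ requires an argument you have not given. (Minor additional slip: $R$ need not inject into $R\otimes\Z[\tfrac1{120}]$, though your choice of $R_0$ can be repaired around this.)

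The paper takes a completely different route precisely to avoid all of this: it proves the proposition directly over $\Z$, with no localisation and no use of the long computation. One lets $\SL_2(\Z)$ act regularly on coloured edges of the trivalent Bass--Serre tree of $\PSL_2(\Z)\simeq\Z/2\Z\ast\Z/3\Z$; the only algebraic input is the integral rewriting rule $\varepsilon\equiv\sum_{k=1}^{n}(-1)^{k+1}\binom{n}{k}u_1^{k}\cdot\varepsilon$ modulo the ideal, valid for every conjugate $u_1$ of $u^{\pm1}$; and the work is a combinatorial turn-sequence analysis showing that, for $n\le 5$, every edge far from a fixed base vertex admits a ``good map'' $u_1$ (a unit translation along a geodesic) whose first $n$ iterates land closer to the base vertex or in configurations recursively reducible. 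Since the coefficients are integers throughout, the primes $2,3,5$ never need separate treatment. If you wish to salvage your approach, you must actually prove the mod $2$ and mod $3$ finiteness statements above; nothing in the computational apparatus of \S\ref{s:SL2:computation} will deliver them.
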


Whether there is a converse proof, from Proposition \ref{p:ZSL2/Itf} to Theorem \ref{t:SL2:series}, is unclear. 
We now give an independent and purely geometric proof of Proposition \ref{p:ZSL2/Itf}.

\begin{proof}[The proof of Proposition \ref{p:ZSL2/Itf} starts here]
The proof makes use of the Bass-Serre tree of $\PSL_2(\Z) = \<w, (uw)\> \simeq \Z/2\Z \ast \Z/3\Z$. 
Since the arity $2$ vertices (associated to $\Z/2\Z$) bear no combinatorial information, we shall forget them and keep only the arity $3$ vertices (associated to $\Z/3\Z$).
In what follows, ``vertex'' will always mean: ternary vertex, and ``edge'' will mean: oriented edge between ternary vertices.

\begin{localnotation*}
Let $V$ be the set of vertices and $E$ be the set of edges.
\end{localnotation*}

$\PSL_2(\Z)$ acts on $V$ with good properties \cite[I, \S4.1, Theorem 7]{SArbresEN}; however the associated action of $\SL_2(\Z)$ is not faithful, so we shall decorate the tree.
%
%
The following must be obvious to the experts.

\begin{localobservation*}
There is a regular action of $\SL_2(\Z)$ on $E' = E \times \{0, 1\}$ lifting the action of $\PSL_2(\Z)$ on $E$.
\end{localobservation*}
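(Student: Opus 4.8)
The plan is to obtain the regular action by lifting, along the central extension $1\to\<i\>\to G_0\to\PSL_2(\Z)\to 1$ (with $i=w^2$), the tautological regular action of $\PSL_2(\Z)$ on $E$ furnished by Bass--Serre theory.

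First I would record why $\PSL_2(\Z)$ acts regularly on $E$. Since $\PSL_2(\Z)\simeq\Z/2\Z\ast\Z/3\Z$ with the two free factors generated by the images of $w$ and of $uw$, every edge stabiliser in its Bass--Serre tree is trivial. After contracting the arity~$2$ vertices, an oriented edge of the resulting trivalent tree is exactly the datum of a binary vertex of the original tree together with an ordering of its two ternary neighbours. The group $\PSL_2(\Z)$ is transitive on such data: it is transitive on ternary vertices; a ternary-vertex stabiliser (a conjugate of $\Z/3\Z$) cyclically permutes the three binary vertices incident to it; and a binary-vertex stabiliser (a conjugate of $\Z/2\Z$) reverses the orientation. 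Moreover the stabiliser of one such oriented edge lies in the intersection of a conjugate of $\Z/3\Z$ and a conjugate of $\Z/2\Z$, hence is trivial. So the action on $E$ is regular; fixing a base edge $e_0$ identifies $E$ with $\PSL_2(\Z)$, equivalently with $G_0/\<i\>$, as a $G_0$-set via the quotient map $G_0\to\PSL_2(\Z)$. Incidentally this shows that every oriented edge has $G_0$-stabiliser exactly $\<i\>$, so $i$ acts trivially on $E$ and the $G_0$-action on $E$ fails to be faithful.

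Then I would lift. The quotient map $\pi\colon G_0\to G_0/\<i\>\simeq E$ is $G_0$-equivariant for left translation on the source and the above action on the target, and it is two-to-one. Choose any set-theoretic section $\sigma\colon E\to G_0$ of $\pi$; then $(e,\varepsilon)\mapsto i^{\varepsilon}\sigma(e)$ is a bijection $E'=E\times\{0,1\}\to G_0$, and transporting left translation along it yields a $G_0$-action on $E'$. This action is regular because left translation is; and the first-coordinate projection $E'\to E$ corresponds under the bijection to $\pi$, hence is $G_0$-equivariant over $G_0\to\PSL_2(\Z)$, so the action on $E'$ lifts the action of $\PSL_2(\Z)$ on $E$. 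Concretely $g\cdot(e,\varepsilon)=(\bar g\,e,\ \varepsilon+\tau(g,e))$, where $\bar g$ denotes the image of $g$ and $\tau(g,e)\in\Z/2\Z$ is defined by $g\,\sigma(e)=i^{\tau(g,e)}\sigma(\bar g\,e)$; this is well defined since $\pi(g\,\sigma(e))=\bar g\,e=\pi(\sigma(\bar g\,e))$, the cocycle identity for $\tau$ is precisely associativity of the transported action, and $i$ now acts by $(e,\varepsilon)\mapsto(e,\varepsilon+1)$, so the lifted action is faithful.

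I expect no real obstacle. The single genuine ingredient is the classical simple transitivity of $\Z/2\Z\ast\Z/3\Z$ on the oriented edges of the trivalent tree; everything else is transport of structure across a central $\Z/2\Z$-extension, and a set-theoretic section always exists. That the extension does not split (for instance $w$ has order~$4$ whereas $\PSL_2(\Z)$ is generated by torsion of orders $2$ and $3$ only) merely means that $E'$ carries no \emph{canonical} decomposition as $E\times\{0,1\}$, which is immaterial to the statement.
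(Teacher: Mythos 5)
Your argument is correct: simple transitivity of $\PSL_2(\Z)$ on the oriented edges of the trivalent tree makes $E$ a copy of $G_0/\langle i\rangle$ as a $G_0$-set, and transporting left translation of $G_0$ on itself through any set-theoretic section of $G_0\to G_0/\langle i\rangle$ gives exactly the regular action on $E\times\{0,1\}$ covering the action on $E$. The paper states this observation without proof (``must be obvious to the experts''), so there is no argument to compare with; your write-up supplies the intended routine verification, and the side remark about non-splitness is immaterial, as you say.
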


We call the elements of $E'$ \emph{coloured edges}.

\begin{localnotation*}\
\begin{itemize}
\item
Let $M = \Z[E']$ be the $\Z$-module freely generated by the elements of $E'$;
\item
let $N \leq M$ be the submodule generated by the elements $(u_1-1)^n\cdot \varepsilon$, for $u_1 \in \{gu^{\pm 1} g^{-1}: g\in G_0\}$ and $\varepsilon\in E'$;
\item
let $Q = M/N$.
\end{itemize}
\end{localnotation*}

By construction the following holds.

\begin{localobservation*}
$\Z[\SL_2(\Z)]/((u-1)^n)$ is finitely generated as a $\Z$-module iff $Q$ is.
\end{localobservation*}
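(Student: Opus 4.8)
The plan is to prove the slightly stronger statement that $Q$ is in fact \emph{isomorphic} to $\Z[\SL_2(\Z)]/((u-1)^n)$ as a $\Z$-module (indeed as a $\Z[\SL_2(\Z)]$-module), whence the asserted equivalence is immediate. Everything rests on the regularity of the action of $\SL_2(\Z)$ on the set $E'$ of coloured edges.

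First I would fix a base coloured edge $\varepsilon_0 \in E'$. By regularity the orbit map $g \mapsto g\cdot\varepsilon_0$ is a bijection $\SL_2(\Z) \to E'$, so extending it $\Z$-linearly produces an isomorphism $\Phi \colon \Z[\SL_2(\Z)] \to M$ of left $\Z[\SL_2(\Z)]$-modules — that is, $M$ is free of rank one over $\Z[\SL_2(\Z)]$ with basis $\{\varepsilon_0\}$. Next I would transport $N$ through $\Phi$. By definition $N$ is the $\Z[G_0]$-submodule of $M$ generated by the elements $(u_1-1)^n\cdot\varepsilon$ with $u_1$ ranging over the conjugates of $u^{\pm 1}$ and $\varepsilon$ over $E'$; writing each such $\varepsilon$ uniquely as $h\cdot\varepsilon_0$ with $h\in G_0$, the generator equals $\Phi\bigl((u_1-1)^n h\bigr)$, and letting $h$ run over $G_0$ these span the left $\Z[G_0]$-module $\sum_{u_1}\Z[G_0]\,(u_1-1)^n\,\Z[G_0]$ — precisely the two-sided ideal $I$ of $\Z[G_0]$ generated by the family $\{(u_1-1)^n\}$. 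Hence $\Phi^{-1}(N)=I$ and $Q\cong\Z[G_0]/I$.

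Finally I would identify $I$ with $((u-1)^n)$. One inclusion is trivial (take $u_1=u$). For the other, each conjugate contributes $(gug^{-1}-1)^n=g(u-1)^ng^{-1}\in((u-1)^n)$, while from $u^{-1}-1=-u^{-1}(u-1)$ and the commutativity of $u^{-1}$ with $u-1$ one gets $(u^{-1}-1)^n=(-1)^n u^{-n}(u-1)^n$, so the conjugates of $u^{-1}$ add nothing new. Thus $I=((u-1)^n)$ and $Q\cong\Z[\SL_2(\Z)]/((u-1)^n)$, which is far more than the claimed equivalence. The proof is essentially a matter of unwinding definitions — the ``\emph{by construction}'' is fully warranted — and the hard part, to the extent there is one, is merely this last identification: keeping track that the module-theoretic description of $N$ corresponds to a \emph{two-sided} ideal of the group ring, and that enlarging the generating family to all unipotent conjugates together with their inverses does not change it.
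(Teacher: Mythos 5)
Your proposal is correct and is exactly the unwinding the paper has in mind: the paper simply asserts the observation ``by construction'', and the intended construction is precisely your identification of $M$ with $\Z[\SL_2(\Z)]$ as a free rank-one left module via the regular action, under which $N$ corresponds to the two-sided ideal generated by $(u-1)^n$ (conjugates and inverses adding nothing, as you check). So you prove the stronger statement $Q\cong\Z[\SL_2(\Z)]/((u-1)^n)$, of which the stated equivalence is an immediate consequence.
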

%

Fix some vertex $v_0$. Call \emph{height} of a coloured edge the distance (in the ternary tree $V$) between its origin and $v_0$. We shall prove that coloured edges of bounded height suffice to generate $Q$, by rewriting modulo $N$ every coloured edge of sufficient height as a $\Z$-linear combination of edges of lesser height.
(If $m$ origin-vertices suffice to do it, the number of generators of $Q$ will be bounded above by $6m$.)

Now notice that for any $\varepsilon \in E'$ and $u_1 \in \{g u^{\pm 1}g^{-1}: g \in G_0\}$:
\[\varepsilon = \sum_{k = 1}^{n} (-1)^{k+1} \begin{pmatrix} n\\ k\end{pmatrix} u_1^k \cdot \varepsilon \mod N\]
So in order to show that $\Z[\SL_2(\Z)]/((u-1)^n)$ is finitely generated as a $\Z$-module, it suffices to show that for $\varepsilon$ of sufficient height, there is $u_1 \in \{g u^{\pm 1}g^{-1}: g \in G_0\}$ taking all iterates $u_1\cdot \varepsilon, \dots, u_1^n\cdot \varepsilon$ to (edges congruent with) edges of lesser height.

We then entirely forget about coloured edges and focus on vertices: it suffices to show that isometries of the tree of the form $u_1 \in \{g u^{\pm 1}g^{-1}: g \in G_0\}$ can recursively take far away vertices and their first iterates closer to $v_0$.
The following is obvious when one realises $V$ in the Poincar\'e upper half-plane \cite[I, \S4.2]{SArbresEN}.

\begin{localobservation*}
For any ordered triple $(a, b, c) \in V^3$ of adjacent vertices with $a\neq c$, there is $u_1 \in \{gu^{\pm 1}g^{-1}: g \in \SL_2(\Z)\}$ mapping $a$ to $b$ and $b$ to $c$.
\end{localobservation*}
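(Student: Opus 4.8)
The plan is to realise the ternary tree $V$ as the dual tree of the Farey tessellation of the Poincar\'e half-plane. Its vertices are then the ideal triangles with vertices in $\mathbb{P}^1(\Q)$, two of them being adjacent exactly when they share a side, and this identification is $\SL_2(\Z)$-equivariant; under it the order-$3$ stabiliser of a vertex corresponds to the stabiliser of a triangle, which permutes the three sides of that triangle --- equivalently its three neighbours in $V$ --- as a $3$-cycle. With this picture the strategy is to pin down the parabolic element $u$ as a translation of $V$ of amplitude $1$ and then to count $\PSL_2(\Z)$-orbits of directed length-$2$ paths.

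First I would check that $u$ acts on $V$ as a hyperbolic isometry of translation length exactly $1$. Since $u$ has infinite order while every vertex stabiliser of $V$ is finite, $u$ fixes no vertex, so it is hyperbolic of translation length $\geq 1$; and $u\colon z \mapsto z+1$ sends the triangle $T_n$ with vertices $\infty, n, n+1$ (for $n \in \Z$) to $T_{n+1}$, so the bi-infinite path $\dots, T_{-1}, T_0, T_1, \dots$ is the axis of $u$ and the translation length is $1$. Consequently, for $u_1 = g u^{\pm 1} g^{-1}$ and three consecutive vertices $a, b, c$ of the axis $g \cdot (\text{axis of } u)$ read in the direction of translation, one has $u_1(a) = b$ and $u_1(b) = c$; conversely, if $u_1(a) = b$ with $d(a,b) = 1$ then, a vertex at distance $r$ from the axis being displaced by $2r+1$, the vertex $a$ already lies on the axis, hence so do $b = u_1(a)$ and $c = u_1(b)$, in order. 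So realising a directed path amounts to finding a conjugate of $u^{\pm 1}$ whose axis carries it.

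Then comes the count. The group $\PSL_2(\Z)$ acts simply transitively on the set $E$ of oriented edges of $V$ --- equivalently to the regular action of $\SL_2(\Z)$ on $E'$ recorded above, and in any case a standard feature of the $C_2 \ast C_3$ action --- so a directed path $a \sim b \sim c$ with $a \neq c$ is precisely an oriented edge $(a,b)$ together with one of its two admissible extensions, whence there are exactly two $\PSL_2(\Z)$-orbits of such directed paths. The forward length-$2$ subpaths of the axis of $u$ form a single orbit $\mathcal{O}_+$ (any two differ by a power of $u \in \PSL_2(\Z)$), and those of the axis of $u^{-1}$, i.e. the reverses of the former, form an orbit $\mathcal{O}_-$. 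It remains to see that $\mathcal{O}_+ \neq \mathcal{O}_-$: if some $g \in \PSL_2(\Z)$ sent $(T_n, T_{n+1}, T_{n+2})$ to $(T_{n+2}, T_{n+1}, T_n)$, it would fix $T_{n+1}$ and swap two of its three neighbours, which is impossible since the order-$3$ stabiliser of $T_{n+1}$ permutes those neighbours cyclically (equivalently, $\PSL_2(\Z)$ preserves the orientation of the half-plane, hence the chirality of a turn through a triangle). Thus $\mathcal{O}_+$ and $\mathcal{O}_-$ are the two orbits, every directed path $a \sim b \sim c$ with $a \neq c$ lies in one of them, and the corresponding element $g u^{\pm 1} g^{-1}$ maps $a$ to $b$ and $b$ to $c$.

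The hard part is exactly this last point: that conjugates of $u$ \emph{and} of $u^{-1}$ are both needed and together suffice, because they realise the two opposite turning directions through a vertex --- this is where the hypothesis $a \neq c$ and the presence of $u^{\pm 1}$ (rather than $u$ alone) in the statement become essential. Everything else is bookkeeping with the Farey tessellation, the only delicate spot being to make sure the translation length is genuinely $1$ and that $u$ is not an edge inversion, which is guaranteed by $u$ having infinite order.
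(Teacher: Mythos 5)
Your proof is correct and follows exactly the route the paper intends: the paper simply asserts the observation as ``obvious when one realises $V$ in the Poincar\'e upper half-plane'' (citing Serre), i.e.\ via the Farey-tessellation picture you use, and your translation-length computation for $u$, the simple transitivity of $\PSL_2(\Z)$ on oriented edges, and the chirality argument distinguishing the two orbits of directed $2$-paths supply precisely the details the paper leaves unwritten. Nothing to correct.
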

%

Geometrically, such an element $u_1$ acts as a translation of length $1$ along a geodesic line always turning in the same direction; we call such a transformation a \emph{good map}.

Let $v_0$ be the vertex we fixed and $a_0$ be another vertex at sufficient distance; we are looking for a good map $f$ such that for $i = 1, \dots, n$, $a_i := f^i(a_0)$ is closer to $v_0$ (implicit: than $a_0$ was).

Let $[v_0, a_0] = (v_0, v_1, \dots, v_d = a_0)$ be the minimal path from $v_0$ to $a_0$; we may suppose $d \geq 6$. Fixing arbitrarily one oriented edge ending at $v_0$ but not starting at $v_1$ we may represent the path as its \emph{turn sequence}, i.e. the sequence of lefts and rights $(v_0; t_1, \dots, t_d)$ with $(t_i) \in \{\ell, r\}^d$.

\begin{localobservation*}
If the turn sequence has $k$ consecutive $r$'s or $\ell$'s not starting at $t_1$, then there is a good map $f$ such that $a_i = f^i(a_0)$ is closer to $v_0$ for $i = 1 \dots 2k+1$.
\end{localobservation*}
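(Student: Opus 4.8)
The plan is to recognise the run of equal turns as a stretch of the axis of a good map and then to compute distances by projecting onto that axis. Say the run consists of $r$'s (the $\ell$ case is symmetric), occurring at $t_j = t_{j+1} = \cdots = t_{j+k-1} = r$ with $j\ge 2$; enlarging the run to the left and to the right I may as well assume it is maximal, that is, $t_{j-1} = \ell$ unless $j = 2$, and $t_{j+k} = \ell$ unless $j+k-1 = d$. A turn $t_i$ takes place at the vertex $v_{i-1}$ of the geodesic $[v_0,a_0]$, so the subpath $(v_{j-2}, v_{j-1}, \dots, v_{j+k-1})$ turns right at each of its $k$ interior vertices. Extending it to a bi-infinite geodesic $\gamma$ that turns right at \emph{every} vertex and parametrising by $\Z$ so that $\gamma(m) = v_{j-2+m}$ for $0\le m\le k+1$, the previous Observation (a good map exists mapping $a\mapsto b\mapsto c$ for any adjacent triple $(a,b,c)$ with $a\ne c$), applied to $(\gamma(2),\gamma(1),\gamma(0))$, furnishes a good map $f$ with axis $\gamma$ acting on it by $f(\gamma(m)) = \gamma(m-1)$.

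Next I would pin down the orthogonal projections onto $\gamma$. Along $[v_0,a_0]$ the segment from $\gamma(0) = v_{j-2}$ to $\gamma(k+1) = v_{j+k-1}$ lies on $\gamma$; since the turn of $[v_0,a_0]$ at $v_{j-2}$ is $t_{j-1} = \ell$ while $\gamma$ turns right there (or $j = 2$ and $\gamma(0) = v_0$), the path $[v_0,a_0]$ enters $\gamma$ exactly at $\gamma(0)$, and since its turn at $v_{j+k-1}$ is $t_{j+k} = \ell$ while $\gamma$ turns right there (or $v_{j+k-1} = a_0$), it leaves $\gamma$ exactly at $\gamma(k+1)$. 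So the projection of $v_0$ is $\gamma(0)$ and that of $a_0$ is $\gamma(k+1)$, and with $e := d(a_0,\gamma)\ge 0$ one gets $d(v_0,a_0) = d(v_0,\gamma(0)) + (k+1) + e$.

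The crucial point — and this is exactly where the hypothesis $j\ge 2$ is spent — is the identity $d(v_0,\gamma(m)) = d(v_0,\gamma(0)) + |m|$ for all $m\le k+1$. For $0\le m\le k+1$ this is just additivity of distance along the geodesic $[v_0,a_0]$; for $m < 0$ it says the geodesic from $v_0$ to $\gamma(0) = v_{j-2}$ continues without backtracking into the part $m<0$ of $\gamma$, which holds because the incoming edge of $[v_0,a_0]$ at $v_{j-2}$ differs from the incoming edge of $\gamma$ there (both sharing the outgoing edge $v_{j-2}v_{j-1}$ but with turns $\ell$ versus $r$), and is vacuous when $j = 2$ since then $v_0$ already lies on $\gamma$. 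Granting this, the conclusion drops out: $f$ is an isometry stabilising $\gamma$ setwise, so $f^i(a_0)$ projects onto $\gamma(k+1-i)$ at distance $e$ from $\gamma$, whence
\[ d(v_0, f^i(a_0)) = d(v_0,\gamma(0)) + |k+1-i| + e < d(v_0,\gamma(0)) + (k+1) + e = d(v_0,a_0) \]
for every $i$ with $0 < i < 2k+2$, that is, for $i = 1,\dots,2k+1$; setting $a_i = f^i(a_0)$ finishes the argument.

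The main obstacle is precisely that linear-growth identity for $m<0$, and more sharply the need for the run to avoid the fictitious turn $t_1$ at $v_0$. If the run did begin at $t_1$, the endpoint of the arbitrarily chosen incoming edge at $v_0$ would sit on $\gamma$ at parameter $1$ rather than $0$, and the same bookkeeping would only deliver $d(v_0,f^i(a_0)) < d(v_0,a_0)$ for $i = 1,\dots,2k-1$ — which is consistent with the option of re-picking that incoming edge so as to shorten the run by one. Everything else is soft tree combinatorics; locating the two projections correctly and verifying the linear-growth identity is where the care goes.
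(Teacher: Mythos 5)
Your proof is correct and follows the same idea as the paper: the good map you construct from the triple $(\gamma(2),\gamma(1),\gamma(0))$ is exactly the paper's unit backward translation along the always-turning-one-way geodesic through the run, the paper merely naming it in one line. Your projection bookkeeping (with the turn mismatch at $v_{j-2}$ explaining why the run must avoid $t_1$) simply makes explicit the distance estimates the paper leaves to the reader; note only that for $i=k+1$ your displayed equality should be a $\leq$, which is all the conclusion needs.
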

\begin{proofclaim}
Locate the repetition in the turn sequence; let $f$ be the good map taking the $k\th$ vertex labelled $r$ to the $(k-1)\th$ and the $(k-1)\th$ to the $(k-2)\th$ (this does make sense even if $k = 1$).
\end{proofclaim}

\begin{localconsequence}
Proposition \ref{p:ZSL2/Itf} holds of $n \leq 3$.
\end{localconsequence}
\begin{proofclaim}
There is a good map $f$ taking $a_1, a_2, a_3$ closer to $v_0$.
\end{proofclaim}

\begin{localconsequence}
Proposition \ref{p:ZSL2/Itf} holds of $n \leq 4$.
\end{localconsequence}
\begin{proofclaim}
If the turn sequence has a genuine repetition, i.e. $k$ consecutive similar turns not starting at $t_1$ with $k \geq 2$, then we are done.

So suppose not: up to dyslaterality, the path $[v_0, a_0]$ is $(v_0, \dots, v_{d-4}; \ell, r, \ell, r)$. Let $f$ be the good map taking $a_0 = v_d$ to $v_{d-1}$ and $v_{d-1}$ to $v_{d-2}$. Then $a_1$ and $a_2$ are strictly closer to $v_0$; so is $a_3$ since $d(v_0, a_3) = d(v_0, a_2) + 1 = d(v_0, a_0) - 1$. On the other hand $d(v_0, a_4) = d(v_0, a_0)$, so it suffices to prove that iterates of $b_0 = a_4$ can be taken closer to $v_0$.
But now the path $[v_0, a_4]$ is $(v_0, \dots, v_{d-3}; r, r, \ell)$ with a genuine repetition: whence the claim.
\end{proofclaim}

\begin{localconsequence}
Proposition \ref{p:ZSL2/Itf} holds of $n \leq 5$.
\end{localconsequence}
\begin{proofclaim}
Here again we may assume that there is no genuine repetition in the turn sequence: $[v_0, a_0] = (v_0, \dots, v_{d-6}; \ell, r, \ell, r, \ell, r)$. As above let $f$ be the good map taking $a_0 = v_d$ to $v_{d-1}$ and $v_{d-1}$ to $v_{d-2}$; as above $a_1, a_2, a_3$ are closer to $v_0$, and $a_4$ at constant distance but with a repetition.

So it suffices to show that $c_0 = a_5$ and its iterates can be taken within distance $< d$ of $v_0$; now $[v_0, c_0] = (v_0, \dots, v_{d-2}; r, \ell, \ell)$; be careful that $d(v_0, c_0) = d+1$. Let $g$ be the good map taking $v_{d-2}$ to $v_{d-3}$ and $v_{d-3}$ to $v_{d-4}$. Then letting $c_i = g^i(c_0)$ it is easily checked that:
\begin{itemize}
\item
$[v_0, c_1] = (v_0, \dots, v_{d-2}; \ell, \ell)$;
\item
$[v_0, c_2] = (v_0, \dots, v_{d-3}; \ell, \ell)$;
\item
$[v_0, c_3] = (v_0, \dots, v_{d-6})$;
\item
$[v_0, c_4] = (v_0, \dots, v_{d-4}; r, r, \ell)$;
\item
$[v_0, c_5] = (v_0, \dots, v_{d-5}; r, r, \ell, r, \ell)$.
\end{itemize}
So for $i = 1\dots 5$, $[v_0, c_i]$ is strictly shorter than $d$ or has length $d$ and bears a genuine repetition: we are done.
\end{proofclaim}

\emph{End of the proof of Proposition \ref{p:ZSL2/Itf}.}
\end{proof}

This lovely argument does not yield a composition series; as a matter of fact it does not even provide a way to identify simple $\Q[\SL_2(\Z)]$-modules of short length.
%

\subsubsection{Longer Length}\label{s:ngeq7}

\begin{proposition}\label{p:ngeq7}
If $n \geq 7$, then $\Z[\SL_2(\Z)]/((u-1)^n)$ is \emph{not} finitely generated.
\end{proposition}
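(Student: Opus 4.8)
The plan is to use the Bass–Serre tree of $\PSL_2(\Z) = \Z/2\Z \ast \Z/3\Z$ exactly as in the proof of Proposition \ref{p:ZSL2/Itf}, but now to show that the recursive height-reduction procedure \emph{must fail} once $n \geq 7$. Recall from \S\ref{s:SL2:trees} that finite generation of $\Z[\SL_2(\Z)]/((u-1)^n)$ over $\Z$ is equivalent to finite generation of $Q = M/N$ over $\Z$, where $M = \Z[E']$ is spanned by coloured edges and $N$ is generated by the elements $(u_1-1)^n\cdot\varepsilon$. So I would instead prove a \emph{lower bound}: that $Q$ requires infinitely many generators, by exhibiting, for each $d$, a coloured edge of height $d$ that cannot be rewritten modulo $N$ in terms of edges of smaller height. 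Equivalently (passing to a quotient), I would construct an explicit abelian-group quotient of $Q$ that is not finitely generated, e.g. by finding a $\Z$-valued function on $E'$ (or on vertices of the ternary tree) which is killed by every operator $(u_1-1)^n$ with $u_1$ a conjugate of $u^{\pm 1}$, yet takes infinitely many independent values.

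The key geometric input, which mirrors the \emph{Observations} in \S\ref{s:SL2:trees}, is that a conjugate $u_1$ of $u^{\pm 1}$ acts on the ternary tree as a ``good map'': a length-one translation along a geodesic that always turns the same way. So $(u_1-1)^n$ applied to a vertex $v$ involves $v, u_1 v, \dots, u_1^n v$, which all lie on a single geodesic through $v$ with constant turn direction. A function $\varphi$ on vertices is annihilated by all such operators precisely when its restriction to every ``monotone geodesic'' (a bi-infinite path turning always the same way, of which the tree has continuum-many but each vertex lies on finitely many) is a polynomial sequence of degree $< n$. For $n \geq 7$ I would build such a $\varphi$ with unbounded ``independent content'' — concretely, choose along a fixed monotone geodesic ray emanating from $v_0$ a polynomial sequence of degree, say, $6$, and propagate it consistently across branchings. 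The branchings are the obstacle to consistency: at each ternary vertex a monotone geodesic can continue in two ways, and one must check that prescribing a degree-$d_0$ polynomial on every such continuation can be done coherently. This is where the bound $n \geq 7$ (equivalently: room for degree-$\geq 6$ polynomials, i.e. $7$ free coefficients, enough to interpolate past the branching constraints) should enter; I expect that with degree $6$ one has exactly enough freedom to satisfy the matching conditions at every vertex and still retain infinitely many linearly independent global solutions, whereas for $n \leq 5$ (degree $\leq 4$) the constraints collapse everything to a finite-dimensional space — consistent with Proposition \ref{p:ZSL2/Itf}.

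In carrying this out I would proceed in the following order. First, reformulate: fix $v_0$, let $\varphi\colon V \to \Z$ be a function, and show $\varphi$ extends (via the colouring) to a $\Z$-linear functional on $M$ vanishing on $N$ iff along every monotone geodesic $\varphi$ agrees with a polynomial of degree $<n$. Second, analyse the ``local'' constraint at a single ternary vertex $v$ with neighbours on monotone geodesics: a geodesic arriving at $v$ and leaving in either of the two monotone ways forces two polynomial conditions sharing the incoming values; deduce the compatibility equation the germ of $\varphi$ at $v$ must satisfy. Third, for $n \geq 7$ solve this compatibility system: build a family $(\varphi_j)_{j\in\mathbb N}$ of solutions, linearly independent modulo the finite-dimensional space of ``globally polynomial'' functions, e.g. by supporting $\varphi_j$ on a nested sequence of half-trees and using the $7$ available coefficients to absorb boundary matching. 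Fourth, conclude that the $\Z$-module of such functionals is not finitely generated, hence neither is $Q$, hence neither is $\Z[\SL_2(\Z)]/((u-1)^n)$.

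The hard part will be the third step: verifying that the branching compatibility conditions genuinely admit an infinite-dimensional solution space rather than forcing unexpected extra collapse. One must rule out a hidden rigidity — the tree is highly connected via the $\PSL_2(\Z)$-action, and a priori the same vertex lies on many monotone geodesics whose constraints could over-determine $\varphi$. The crux is a counting/dimension estimate: at each ternary vertex the incoming data (roughly $n$ values, i.e. $7$ for $n=7$) must be extendable to \emph{both} outgoing monotone continuations, and one needs to show the resulting propagation, tracked over a subtree, loses dimension slowly enough to leave infinitely many degrees of freedom overall. I would expect the clean statement to be that degree-$6$ polynomial germs propagate freely along any single monotone geodesic and that the two-way branching imposes only finitely many global linear relations per bounded region, so that truncating to larger and larger balls yields strictly growing solution spaces — which is exactly the negation of finite generation.
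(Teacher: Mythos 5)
Your plan never gets past what you yourself call the hard part: the whole proof is your third step, the construction of infinitely many independent functionals compatible with the branching constraints, and for it you offer only a counting heuristic (``degree $6$ gives seven coefficients, enough to interpolate past the branchings''), not an argument. Nothing rules out the hidden rigidity you flag -- every vertex lies on several monotone geodesics, and the constraints they impose are exactly what you would have to disentangle -- so as written the proposal is a dual reformulation of the problem rather than a proof.

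There is also a concrete reason to doubt the route can work at all with $\Z$-valued functionals. The paper's proof is a short reduction in the opposite spirit: for $n\geq 7$ the ring surjects onto $\Z[\SL_2(\Z)]/((u-1)^7)$, and reducing mod $7$ and killing the centre one uses $(u-1)^7\equiv u^7-1 \pmod 7$ to obtain $\F_7[\PSL_2(\Z)]/(u^7-1)\simeq \F_7[H]$, where $H=\langle u,w \mid w^2=(uw)^3=u^7=1\rangle$ is the $(2,3,7)$ triangle group, which is infinite; an infinite-dimensional $\F_7$-algebra is not a finitely generated $\Z$-module. So the known obstruction to finite generation is pure $7$-torsion. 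A $\Z$-valued linear functional on $Q$ -- which is what your steps one through four produce -- vanishes identically on any torsion quotient, so infinitely many independent such functionals would prove that $Q$ has infinite free rank, i.e.\ that the ring is infinite-dimensional over $\Q$; that is a strictly stronger statement than the proposition, and it is precisely the kind of question (the $\Q$-coefficient version, and $n=6$) the paper leaves open. To salvage the tree picture you would at least need $\F_7$-valued functions, where the vanishing of $(u_1-1)^7$ becomes periodicity $\varphi(u_1^7\varepsilon)=\varphi(\varepsilon)$ along each axis -- and at that point you are essentially re-deriving the triangle-group quotient that the paper uses directly.
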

\begin{proof}
The ring under consideration admits as a quotient $\Z[\SL_2(\Z)]/((u-1)^7)$, which in turn maps onto:
\[\Z[\SL_2(\Z)]/\left(7, i-1, u^7-1, (u-1)^7\right) \simeq \F_7[\PSL_2(\Z)]/(u^7-1) \simeq \F_7[H]\]
where $H$ is the quotient of $\PSL_2(\Z)$ by the normal closure of $u^7$. Hence $H = \<u, w|(uw)^3 = w^2 = u^7 = 1\>$ is the (``ordinary'') triangle group $(2,3,7)$, which is infinite \cite[\S III.7]{LyndonSchupp}. It follows that $\F_7[H]$ is not finitely generated as a $\Z$-module, and neither is $\Z[\SL_2(\Z)]/((u-1)^7)$.
\end{proof}

It is now clear that the path to Theorem \ref{t:SL2:series} we took is simply hopeless in length $n \geq 7$. Our curiosity is sufficiently aroused to ask the following.

\begin{question*}
What happens when $n = 6$?
\end{question*}

But we prefer to leave the scene before the geometers arrive.



\subsection{Before We Move On}\label{s:fenetre}

The original goal of our work was to study some $\SL_2(\K)$-modules of length $n$. As Proposition \ref{p:ngeq7} shows, the behaviour of $\SL_2(\Z)$-modules of length $n$ grows wild with $n$ and a naive interpretation of our ``two-step methodology'' (see the introduction) over the integers cannot succeed.

Of course working over the ring of integers was too ambitious; over $\F_p$ one may hope to prove Theorem \ref{t:SL2:series} with no restrictions on $n$ (but for decent values of $p$) by arguments from finite group theory.

\begin{question*}
Let $G_1 = \SL_2(\F_p)$ and $V$ be an $\F_p [G_1]$-module of length $n < p$. Does $V$ have a composition series with every factor of the form $\oplus_{I_k}\Sym^k \Nat G_1$?
\end{question*}

The answer must be known \cite{AJLProjective}; 
apparently not so in characteristic $0$.

\begin{question*}
If $V$ is a $\Q [\SL_2(\Q)]$-module of finite length, what happens?
\end{question*}

\section{Scalar Flesh}\label{S:SL2:flesh}

The current section deals with $\SL_2(\K)$-modules. After a few liminary remarks we shall prove Theorem \ref{t:Symn-1NatSL2K} in \S\ref{s:isotypicity}.

\begin{notation*}
Let $\K$ be a field and $G = \SL_2(\K)$; $u, w \in G$ are defined like in \S\ref{s:SL2:criterion}.
Let $U = C_G(u)$, a maximal unipotent subgroup.
\end{notation*}

\begin{notation*}
For $V$ a $G$-module let $Z_0(V) = {0}$ and $Z_{k+1}(V)/Z_k(V) = C_{V/Z_k(V)}(U)$.
\end{notation*}

The length of $V$ is the least $k$ (if any) with $Z_k(V) = V$.

\subsection{A Bitter Remark}\label{s:SL2:Bruhat}

\begin{observation*}
Let $V$ be an $\SL_2(\K)$-module of length $n$. Then $\<G\cdot (Z_1(V) \cap w\cdot Z_{n-1}(V))\>$ has length at most $n-1$.
\end{observation*}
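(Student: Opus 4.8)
The plan is to prove the slightly stronger statement $\<G\cdot L\>\subseteq Z_{n-1}(V)$, where $L=Z_1(V)\cap w\cdot Z_{n-1}(V)$; since the filtration restricts to $Z_k\big(Z_{n-1}(V)\big)=Z_k(V)$ for $0\le k\le n-1$, the submodule $Z_{n-1}(V)$ itself has length at most $n-1$, and the conclusion follows. (If $n\le 1$ then $Z_{n-1}(V)=0$, so $L=0$ and there is nothing to do; assume $n\ge 2$ from now on.) The point to bear in mind is that $Z_{n-1}(V)$ is \emph{not} $G$-invariant in general, only $B$-invariant, where $B=UT$ is the standard Borel subgroup, $T$ the diagonal torus; so the inclusion is not free and one must feed in the Bruhat decomposition $G=B\sqcup BwB$.

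First I would record two structural facts. $(1)$ Every $Z_k(V)$ is a $\K$-submodule stable under $B$: this is an immediate induction on the defining recursion $Z_{k+1}(V)/Z_k(V)=C_{V/Z_k(V)}(U)$, using that $B$ normalises $U$, so that $g\cdot C_W(U)=C_W\big(gUg^{-1}\big)=C_W(U)$ for $g\in B$ and any $B$-module $W$. In particular $Z_1(V)$ and $Z_{n-1}(V)$ are $B$-invariant, $Z_1(V)\subseteq Z_{n-1}(V)$, and $Z_{n-1}(V)$ has length at most $n-1$. $(2)$ The subspace $L$ is $B$-invariant: $U$ acts trivially on $L\subseteq Z_1(V)=C_V(U)$, while for $t\in T$, using $tw=wt^{-1}$ and the $T$-invariance of $Z_1(V)$ and $Z_{n-1}(V)$, one gets $t\cdot L=tZ_1(V)\cap(tw)Z_{n-1}(V)=Z_1(V)\cap w\,t^{-1}Z_{n-1}(V)=L$.

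Next I would observe that $w\cdot L\subseteq Z_{n-1}(V)$: if $v\in L$ then $v\in w\cdot Z_{n-1}(V)$, hence $w^{-1}v\in Z_{n-1}(V)$, and therefore $wv=w^2\big(w^{-1}v\big)=i\big(w^{-1}v\big)\in Z_{n-1}(V)$, since the central involution $i=w^2$ preserves the submodule $Z_{n-1}(V)$. So, writing $M:=Z_{n-1}(V)$, both $L\subseteq Z_1(V)\subseteq M$ and $wL\subseteq M$. Now run Bruhat: for $g\in B$ one has $g\cdot L\subseteq L\subseteq M$ by $(2)$; and for $g=b_1wb_2$ with $b_1,b_2\in B$,
\[
g\cdot L \;=\; b_1\,w\,(b_2\cdot L)\;\subseteq\; b_1\,w\,L\;\subseteq\; b_1\cdot M\;\subseteq\; M,
\]
using successively $b_2\cdot L\subseteq L$, then $wL\subseteq M$, then the $B$-invariance of $M$. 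Hence $G\cdot L\subseteq M$, and since $M$ is a $\K$-subspace the $\K[G]$-submodule $\<G\cdot L\>$ generated by $L$ — which is exactly the $\K$-span of $G\cdot L$ — lies inside $M=Z_{n-1}(V)$; a fortiori its length is at most $n-1$.

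There is honestly no obstacle here: the statement is ``bitter'' precisely because it is cheap and disappointing. The only care needed is to resist writing that $Z_{n-1}(V)$ is $G$-stable (it is not); replacing that false statement by ``$B$-stable, plus $wL$ already lands inside, plus Bruhat'' is the whole trick. The moral is that the naive attempt to peel off a top layer isomorphic to $\oplus\Sym^{n-1}\Nat G$ as the $G$-submodule generated by $Z_1(V)\cap w\cdot Z_{n-1}(V)$ cannot work: that submodule always has length strictly less than $n$, so it never sees $\Sym^{n-1}\Nat G$ at all.
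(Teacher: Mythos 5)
Your argument is correct and is essentially the paper's: both reduce to showing $\<G\cdot(Z_1(V)\cap w\cdot Z_{n-1}(V))\>\leq Z_{n-1}(V)$ via $B$-invariance of the $Z_k(V)$ and the Bruhat decomposition, the only cosmetic difference being that you write $G=B\sqcup BwB$ (hence check $T$-invariance of the intersection) where the paper uses $G=B\sqcup BwU$ and lets $U$ act trivially on $Z_1(V)$. No gap.
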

\begin{proofclaim}
We claim that $\<G\cdot (Z_1(V) \cap w\cdot Z_{n-1}(V))\> \leq Z_{n-1}(V)$. Let $a_1 \in Z_1(V) \cap w\cdot Z_{n-1}(V)$ and $g \in G$. Write the Bruhat decomposition $G = B \sqcup BwU$ of $G = \SL_2(\K)$, where $B = N_G(U)$.
Notice that the subgroups $Z_k(V)$ are $B$-invariant, and distinguish two cases:
\begin{itemize}
\item
if $g \in B$, then $g\cdot a_1 \in Z_1(V) \leq Z_{n-1}(V)$;
\item
if $g = bwu$ with obvious notations, then $g\cdot a_1 = bw\cdot a_1 \in Z_{n-1}(V)$,
\end{itemize}
which proves the observation.
\end{proofclaim}

\begin{remark*}
Such an argument for $\SL_2(\Z)$-modules would have delighted us. Yet $\SL_2(\Z)$ has no Bruhat decomposition. Actually our tedious proof of Theorem \ref{t:SL2:series} suggests precisely that in short nilpotence length one may at some cost find something like a weak form of such a decomposition.
\end{remark*}

The observation is not so useful anyway: nothing guarantees that $\overline{V} = V/\<G\cdot (Z_1(V)\cap w\cdot Z_{n-1}(V))\>$ is well-behaved; i.e., we cannot control $Z_1(\overline{V})\cap w\cdot Z_{n-1}(\overline{V})$. (Iterating has no reason to terminate after finitely many steps.)

\subsection{From the Integers to the Rationals}

Here we start using the full Steinberg relations for $\SL_2(\K)$.

\begin{notation*}
For $\lambda \in \K_+$ (resp. $\K^\times$) let $u_\lambda = \begin{pmatrix} 1 & \lambda \\ & 1\end{pmatrix}$ and $t_\lambda = \begin{pmatrix} \lambda \\ & \lambda^{-1}\end{pmatrix}$.
\end{notation*}

\begin{relations*}
$t_\mu u_\lambda t_{\mu^{-1}} = u_{\lambda \mu^2}$ and 
$w t_\lambda w^{-1} = t_{\lambda^{-1}} = t_{\lambda}^{-1}$.
\end{relations*}

\begin{relations*}[Steinberg relations]
$u_\lambda wu_{\lambda^{-1}} wu_\lambda w = t_\lambda$.
\end{relations*}

\begin{notation*}
Suppose that a $G$-module $V$ has length $n$ and is $n!$-divisible and $n!$-torsion-free. Then for $\lambda \in \K$, let $x_\lambda = \log u_\lambda = \sum_{k \geq 1} (-1)^{k+1}\frac{(u_\lambda -1)^k}{k}$.
\end{notation*}

\begin{observation*}
Let $V$ be a $\Q[\SL_2(\Q)]$-module. Suppose that for some unipotent element $u \in \SL_2(\Q)$, $(u-1)^5 = 0$ in $\End V$. Then $V$ has a composition series each factor of which is a direct sum of copies of $\Sym^k \Nat \SL_2(\Q)$ with $k \in \{0, \dots, 4\}$.
\end{observation*}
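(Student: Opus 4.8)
The plan is to reduce the statement about $\Q[\SL_2(\Q)]$-modules to Theorem~\ref{t:SL2:series}, which is a statement about $\Q[\SL_2(\Z)]$-modules. First I would observe that the hypothesis ``$(u-1)^5 = 0$ for \emph{some} unipotent element $u$'' immediately upgrades to ``$(u'-1)^5 = 0$ for \emph{every} unipotent element $u'$'': any two nontrivial unipotents of $\SL_2(\Q)$ lying in conjugate root subgroups are conjugate, and both root subgroups $U$ and $U^-$ are interchanged by $w$, so conjugacy in $G = \SL_2(\Q)$ spreads the condition to all unipotents. (Concretely, $u_\lambda = t_\mu u_{\lambda\mu^{-2}} t_\mu^{-1}$ and $w u_\lambda w^{-1}$ are unipotent in the opposite subgroup, so one nilpotent element forces $\ell(V) \le 5$ in the sense of the length defined via $U$.) Hence $V$, restricted to $G_0 = \SL_2(\Z) \le \SL_2(\Q)$, satisfies the hypothesis of Theorem~\ref{t:SL2:series}.

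\textbf{Applying Theorem~\ref{t:SL2:series} and lifting the conclusion.} By Theorem~\ref{t:SL2:series}, the $\Q[G_0]$-module $V$ has a composition series $0 = W_0 < W_1 < \dots < W_m = V$ of $\Q[\SL_2(\Z)]$-submodules whose successive quotients are direct sums of copies of $\Sym_\Q^k \Nat \SL_2(\Z)$ for various $k \in \{0,\dots,4\}$. The task is then twofold: (i) show the $W_j$ are in fact $\Q[G]$-submodules, not merely $G_0$-submodules; and (ii) identify each factor $W_j/W_{j-1}$ with a sum of $\Sym^k \Nat \SL_2(\Q)$ as a $\Q[G]$-module. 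For (i), the cleanest route is to re-examine the proof of Theorem~\ref{t:SL2:series}: every submodule produced there is of the form $\Quad(V)$, $\Cub(V)$, $\Quart'(V)$, or a preimage thereof, and these are defined purely in terms of $x = \log u$ and $w$ — the very elements that exist in $\SL_2(\Q)$ with the same Steinberg relations. The invariance arguments in that proof only use $(uw)^3 = 1$ and the length bound, so they go through verbatim over $\Q$, showing each $W_j$ is $\<u,w\>$-invariant; since $\<u_\lambda : \lambda\in\Q\> \cup \{w\}$ generates $\SL_2(\Q)$ and the root subgroups are all $G_0$-conjugate to subgroups of the unipotent generated together with $w$… but this is precisely where care is needed (see below).

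\textbf{Identifying the factors over $\Q$.} For step (ii), each factor $F = W_j/W_{j-1}$ is a $\Q[G]$-module (granting (i)) which, as a $\Q[G_0]$-module, is $\oplus_I \Sym_\Q^k \Nat \SL_2(\Z)$. One invokes the uniqueness of the $\K$-structure: a $\Q[\SL_2(\Q)]$-module whose restriction to $\SL_2(\Z)$ is isotypic of type $\Sym^k$ must be $\oplus_J \Sym^k \Nat \SL_2(\Q)$ — this is exactly the pattern of Theorem~\ref{t:Symn-1NatSL2K} with $\K = \Q$, $\K_1 = \Q$ (trivially radically closed, and $\mathrm{char}\,\Q = 0$), applied with $n-1 = k$, so $n = k+1 \le 5$. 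The only subtlety is that Theorem~\ref{t:Symn-1NatSL2K} as stated takes the $\K_1$-module to be isotypic \emph{as a $\SL_2(\K_1)$-module}; here $\K_1 = \Q$ already, so the hypothesis is met on the nose once we know $F$ restricted to $\SL_2(\Q)$... which is circular. The honest version: the factors from Theorem~\ref{t:SL2:series} are $\oplus \Sym^k_\Q \Nat \SL_2(\Z)$ as $\Z[\SL_2(\Z)]$-modules, and since $\Sym^k \Nat$ of $\SL_2$ is defined functorially, the $\SL_2(\Z)$-module $\Sym^k_\Q\Nat\SL_2(\Z)$ extends uniquely to the $\SL_2(\Q)$-module $\Sym^k\Nat\SL_2(\Q)$; one checks that any $\Q[G]$-module structure on a sum of copies of it restricting correctly to $G_0$ agrees with the functorial one, because the action of each $u_\lambda$ is forced by $\log$-linearity from the action of $u = u_1$ (the key point: on $\Sym^k\Nat$, $x_\lambda = \lambda\, x_1$ on the module level, a relation detectable inside $\SL_2(\Z)$ via the commutator and scaling identities, provided we are in characteristic $0$ or $> 2k$, which holds since $k \le 4$).

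\textbf{Main obstacle.} The genuine difficulty is step (i) combined with the tension in step (ii): verifying that the submodules and the factor-identifications produced by the $\SL_2(\Z)$-machinery are genuinely $\SL_2(\Q)$-equivariant, i.e. that the extra scalars $u_\lambda$ for $\lambda \in \Q \setminus \Z$ preserve the $W_j$ and act on the factors as the polynomial action. I expect this to require either (a) redoing the $\Quad/\Cub/\Quart'$-invariance computations with $u_\lambda$ in place of $u$ — routine given the Steinberg relations for $\SL_2(\K)$ stated in \S\ref{S:SL2:flesh}, since $t_\mu u_\lambda t_\mu^{-1} = u_{\lambda\mu^2}$ lets one conjugate — or (b) a cleaner structural observation that $\SL_2(\Q)$ is generated by $G_0$ together with the diagonal torus $T(\Q)$, and $T(\Q)$-invariance of the $W_j$ follows from their characterization via $x = \log u$ since $T$ normalizes $U$. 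Route (b) is shorter and is what I would pursue: the $W_j$ are defined by conditions on $x$ and $w$ that are manifestly $T(\Q)$-stable, hence $\<G_0, T(\Q)\> = \SL_2(\Q)$-stable; and on each factor, $T(\Q)$-equivariance plus the $\SL_2(\Z)$-structure pins down the polynomial action. This last pinning-down — that there is a \emph{unique} compatible $\SL_2(\Q)$-module structure — is where I'd expect to spend real effort, and it is morally a baby case of Theorem~\ref{t:Symn-1NatSL2K}.
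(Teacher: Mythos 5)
Your proposal follows essentially the same route as the paper: restrict to $G_0 = \SL_2(\Z)$, invoke Theorem \ref{t:SL2:series}, prove $T$-invariance of the terms of the resulting series using $\SL_2(\Q) = \langle \SL_2(\Z), T\rangle$ together with the description of those terms through $x$ and $w$, and finally observe that on each factor the action of $u_\lambda$, hence of $t_\lambda$ via the Steinberg relations, is forced and agrees with the polynomial action. The one genuine slip is at the very first step: it is not true that all nontrivial unipotents of $\SL_2(\Q)$ lying in conjugate root subgroups are conjugate --- the classes of the $u_\lambda$ are indexed by $\Q^\times/(\Q^\times)^2$, so conjugation by $t_\mu$ only spreads the hypothesis within the square class of $\lambda$. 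To reach all of $U$ you must also take powers ($u_{a\lambda} = u_\lambda^a$, and $u_\lambda^a - 1$ lies in the ideal generated by $u_\lambda - 1$), combined with the fact that every rational is an integer multiple of a square; this is precisely how the paper obtains the $U$-length bound, quoting its companion Variations. Relatedly, your ``manifestly $T$-stable'' claim for the terms $W_j$ and the ``$\log$-linearity'' forcing of the $u_\lambda$-action both rest on the identity $x_\lambda = \lambda x$ in $\End V$, which you gesture at but never derive; the paper gets it from $u_{1/a}^a = u$, i.e. $e^{a x_{1/a}} = e^x$, and it is also what yields $t_\lambda x^4 w x^2 t_{\lambda^{-1}} = \lambda^4 x^4 w x^2$, the extra check needed for the $\Quart'$ terms in the $n=5$, $i=-1$ case. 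Once these two points are made explicit, your outline coincides with the paper's proof.
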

\begin{proofclaim}
By assumption $u-1$ is, in $\End V$, nilpotent with order say $n$. Since every element in $\Q$ is an integer multiple of a square, it follows from \cite[Variations n$^\circ$5 and n$^\circ$6]{TV-I} that $V$ has $U$-length at most $n$: every element in $U$ has order at most $n$, and we may take logarithms in $\End V$. Then for any integer $a \neq 0$, $e^{a x_{\frac{1}{a}}} = u_{\frac{1}{a}}^a = u = e^x$, and therefore $x_{\frac{1}{a}} = \frac{1}{a} x$, so for any $\lambda \in \Q^\ast$, $x_\lambda = \lambda x$ in $\End V$.

We now show that every term in the composition series (as an $\SL_2(\Z)$-module) provided by Theorem \ref{t:SL2:series} is $\SL_2(\Q)$-invariant; it suffices to show that each term is $T$-invariant where $T$ is the group of diagonal matrices, since $\SL_2(\Q) = \<\SL_2(\Z), T\>$.

But in \emph{any} $\Q[\SL_2(\Q)]$-module of finite length, $\ker x$ is $T$-invariant. This holds since for any rational $\lambda \neq 0$, $x_\lambda = \lambda x$, so they have the same kernel; in particular $\ker x = C_V(u) = C_V(U)$, which is therefore $T$-invariant, and so is $Z_i^j(V)$. In the $n = 5, i = -1$ case one also had to take some intersections (see the definition of $\Quart'$ in \S\ref{s:-5}). But $t_\lambda x^4 wx^2 t_{\lambda^{-1}} = x_{\lambda^2}^4 w x_{\lambda^{-2}}^2 = \lambda^4 x^4 wx^2$, so $\ker (x^4 wx^2)$ is $T$-invariant as well. This shows that the $\SL_2(\Z)$-submodules $\Quad(V)$, $\Cub(V)$, $\Quart'(V)$ are actually $\SL_2(\Q)$-submodules, and the same holds in any subquotient of $V$.

Hence all terms in our composition series are $\Q[\SL_2(\Q)]$-modules. We may focus on one term and assume $V \simeq \oplus_I \Sym_\Q^k\Nat \SL_2(\Z)$ as $\Q[\SL_2(\Z)]$-modules. As we saw the action of $u$ determines that of $u_\lambda$, which by the Steinberg relations determine that of $t_\lambda$, and all these elements act like on $\Sym^k\Nat\SL_2(\Q)$.
\end{proofclaim}

\begin{remark*}
We do not know whether this may hold in longer length or not (see \S\ref{s:fenetre}).
\end{remark*}

\subsection{The Isotypical Case}\label{s:isotypicity}

\renewcommand{\thestep}{\arabic{step}}

\begin{notation*}\
\begin{itemize}
\item
The double factorial $n!!$ is the two-step factorial $n(n-2)(n-4)\dots$
\item
The notation $\oplus_I M$ stands for a direct sum of copies of $M$.
\end{itemize}
\end{notation*}

Recall that $\K$ is $k$-radically closed if for any $\alpha \in \overline{\K}$, $\alpha^k \in \K$ implies $\alpha \in \K$.

\begin{theorem}\label{t:Symn-1NatSL2K}
Let $n\geq 2$ be an integer and $\K$ be a field of characteristic $0$ or $\geq 2n+1$. Suppose that $\K$ is $2(n-1)!!$-radically closed. Let $G = \SL_2(\K)$ and $V$ be a $G$-module. Let $\K_1$ be the prime subfield and $G_1 = \SL_2(\K_1)$. Suppose that $V$ is a $\K_1$-vector space such that $V \simeq \oplus_I \Sym^{n-1} \Nat G_1$ as $\K_1 [G_1]$-modules.

Then $V$ bears a compatible $\K$-vector space structure for which one has $V \simeq\oplus_J \Sym^{n-1}\Nat G$ as $\K [G]$-modules.
\end{theorem}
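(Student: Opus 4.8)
The plan is to build the missing $\K$-linear structure on $V$ out of the one-parameter unipotent subgroups and then to read off the module isomorphism from the Steinberg relations — the whole torus being available now, in contrast with the combinatorial predicament of \S\ref{S:SL2:bones}. Since $V$ is $\K_1$-linear of length $n$ and the characteristic is $0$ or $\geq 2n+1 > n$, each $u_\lambda - 1$ ($\lambda \in \K$) is nilpotent of order $\leq n$ on $V$, so one may form $x_\lambda = \log u_\lambda \in \End V$; from $u_\lambda u_\mu = u_{\lambda + \mu}$ with commuting factors one gets $x_{\lambda + \mu} = x_\lambda + x_\mu$, and from $(u_{1/a})^a = u$ that $x_\lambda = \lambda x$ for $\lambda \in \K_1$, where $x = x_1$. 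Writing $y$ for the logarithm attached to the opposite unipotent subgroup and $h = [x,y]$, the triple $(x,y,h)$ is an $\mathfrak{sl}_2(\K_1)$-triple, and the hypothesis $V \simeq \oplus_I \Sym^{n-1}\Nat G_1$ furnishes the weight decomposition $V = \bigoplus_m V_m$ over $\K_1$, with $m$ running through $\{-(n-1), -(n-3), \dots, n-1\}$; the characteristic hypothesis guarantees that the rational torus $T_1$ separates these $2n-1$ weights, so the $V_m$ are intrinsic. Here $x$ lowers the weight by $2$ and $y$ raises it by $2$, both bijectively outside the extreme range, $\ker x = V_{-(n-1)} = C_V(U)$, and $\ker y = V_{n-1}$.

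Next I would promote this to all of $G$. The conjugation relations $t_\mu x_\lambda t_\mu^{-1} = x_{\lambda \mu^2}$, together with the $\K_1$-linearity of $\lambda \mapsto x_\lambda$, the nilpotence bound, and the existence in $\K_1^\times$ of a square of sufficiently large multiplicative order (again guaranteed by the characteristic hypothesis), force each $x_\lambda$ to be homogeneous of weight $-2$ — that is, $x_\lambda\colon V_m \to V_{m-2}$ with no spurious components — and likewise $y_\lambda$ of weight $+2$ and $t_\mu$ of weight $0$. One then defines a $\K$-scalar multiplication: on the lowest weight space $V_{-(n-1)}$ by $\lambda \cdot v := \psi^{-1}(y_\lambda v)$, where $\psi = y|_{V_{-(n-1)}}\colon V_{-(n-1)} \xrightarrow{\sim} V_{-(n-3)}$, and symmetrically via the $x_\lambda$ on the neighbouring weight spaces, transporting through the bijections $x$. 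Each such formula is biadditive, restricts to the old $\K_1$-action, and sends $1$ to the identity.

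The heart of the matter is to check that this really is a $\K$-vector space structure on which $G$ acts $\K$-linearly, and then to identify $V$. $\K$-linearity of $U$ is built in; a Bruhat-type reduction leaves $w$ and the torus, and the Steinberg relation $u_\lambda w u_{\lambda^{-1}} w u_\lambda w = t_\lambda$ is precisely what makes them fall into line — evaluated on weight vectors it forces $w$ to be $\K$-semilinear and pins the torus down to the power law $t_\mu|_{V_{-(n-1)}} = \mu^{\,n-1}\cdot$. The one genuinely non-formal input is that the several a priori distinct $\K$-structures just produced (from $x$, from $y$, across the various weight spaces) must be reconciled, and the gluing identities only determine the relevant field elements up to a root whose degree divides $2(n-1)!!$; it is here that $2(n-1)!!$-radical closedness of $\K$ is used, and I expect this reconciliation to be the main obstacle. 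Everything else is manipulation of Steinberg relations in the spirit of \S\ref{S:SL2:bones}, only much lighter, the torus sparing us the painful bookkeeping of that section.

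Once $V$ is a $\K[G]$-module, the characteristic hypothesis makes $\Sym^{n-1}\Nat G$ irreducible with endomorphism ring $\K$, so $V$ is iso-typic of that type — no field automorphism has crept in, the scalar action having been built so that $x_\lambda$ acts by $\lambda$ — hence completely reducible. Choosing a $\K$-basis $(b_j)_{j \in J}$ of $\ker x = V_{-(n-1)} = C_V(U)$, each $\K[G] b_j$ is a copy of $\Sym^{n-1}\Nat G$ and $V = \bigoplus_{j \in J} \K[G] b_j$, which is the asserted isomorphism, compatible with the given $\K_1$-structure by construction.
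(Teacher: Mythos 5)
Your outline runs parallel to the paper's actual proof — logarithms $x_\lambda=\log u_\lambda$ of the one-parameter unipotents, a weight-space decomposition of $V$ obtained from the prime-field data, a scalar action defined on one weight space by transporting $x_\lambda$ (or $y_\lambda$) and then spread to the others, and a final isotypical identification. But it stops short of the theorem's actual content. The step you describe as ``the one genuinely non-formal input'', namely reconciling the various putative $\K$-structures and proving that the scalar action is multiplicative and that all of $G$ (not just $U$) acts $\K$-linearly, is precisely the heart of the matter, and you defer it rather than prove it. In the paper this is the ``Timmesfeld equation'' $t_\lambda\cdot\zeta_k(a_1)=\frac{(-1)^{n-1}}{n-1}\zeta_k x_{\lambda^{n+1-2k}}\zeta_2(a_1)$, established by a descending induction starting at the middle weight space $k=\lfloor\frac{n+1}{2}\rfloor$, with a parity case division ($n$ odd versus even, the latter needing square roots in $\K$) and with $(n+1-2k)$\th and $(n-1-2k)$\th roots extracted at each stage — this is exactly where the $2(n-1)!!$-radical closedness enters, and it is not a formal consequence of ``evaluating the Steinberg relation on weight vectors''. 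Without an argument of this kind your construction only yields a biadditive pairing $\K\times V\to V$ restricting to the $\K_1$-action; multiplicativity in $\lambda$, agreement of the definitions across weight spaces, and linearity of $w$ and of the torus all remain unproved, so the proposal is a plan rather than a proof.

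A secondary but real gap: your justification that each $x_\lambda$ is homogeneous of weight $\pm 2$ relies on conjugation by the prime-field torus and ``a square in $\K_1^\times$ of sufficiently large multiplicative order''. In the boundary characteristic $p=2n+1$, which the hypotheses allow, $\K_1^\times$ has order $2n$ (and its squares order $n$), so the relation $t_\nu x_\lambda t_\nu^{-1}=\nu^2 x_\lambda$ for $\nu\in\K_1^\times$ cannot exclude a spurious component shifting the weight by $-2(n-1)\equiv 2 \pmod{p-1}$, i.e.\ a component from the highest to the lowest weight space. The paper avoids this by writing $x_\lambda=t_\ell x t_\ell^{-1}$ with $\ell\in\K$ a square root of $\lambda$ (radical closedness) and using invariance of the spaces $\check{Z}_k=Z_k\cap w\cdot Z_{n+1-k}$ under the full torus $T$; that invariance in turn rests on the paper's Claim \ref{v:Symn-1NatSL2K:st:K1}, which identifies the $U_1$-central series with the $U$-central series — another point your sketch assumes implicitly (your $V_m$ are defined from prime-field data only, and you never show they are $T$-invariant before conjugating by $t_\mu$, $\mu\in\K$). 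Both issues are repairable along the paper's lines, but as written the proposal omits the decisive computation.
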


(We give some slightly different versions after the proof.)
\setcounter{step}{0}
\begin{proof}
Let again $U = C_G(u) = \{u_\lambda: \lambda \in \K_+\}$; let $Z_0 = \{0\}$ and $Z_{k+1}/Z_k = C_{V/Z_k}(U)$; also let\inmargin{$\check{Z}_k$} $\check{Z}_k = Z_k \cap w \cdot Z_{n + 1 - k}$; the former are $B = N_G(U)$-submodules; the latter are only $T = B\cap wBw^{-1}$-submodules. Of course $w \cdot \check Z_k = \check Z_{n + 1 - k}$.

Let\inmargin{$U_1$} $U_1 = U \cap G_1$. Since $V$ has $U_1$-length $n$ and is $n!$-divisible and $n!$-torsion-free, the definition $x = \log u = \sum_{k \geq 1} (-1)^{k+1}\frac{(u -1)^k}{k}$ makes sense in $\End V$.

\begin{localnotation*}
For $a_1 \in Z_1$, $k = 1 \dots n$, let:\inmargin{$\zeta_k(a_1)$}
\[\zeta_k(a_1) = \frac{1}{(n-k)!} x^{n-k} w \cdot a_1\]
\end{localnotation*}

By definition, $\zeta_n(a_1) = w a_1$. Clearly $\zeta_k(a_1) \in Z_k$, but it is not clear a priori whether it lies in $\check Z_k$.
Finally note that $x \zeta_{k+1}(a_1) = (n-k) \zeta_k(a_1)$.

\begin{step}[analysis over $\K_1$]\label{v:Symn-1NatSL2K:st:K1}
$V$ has $U$-length $n$; $V = \oplus_{k = 1}^n \check{Z}_k$. The $\zeta_k$ maps define additive isomorphisms $Z_1 \simeq \check{Z}_k$, whereas $x$ maps $\check{Z}_{k+1}$ to $\check{Z}_k$. Moreover, for any $a_1 \in Z_1$, and any integer $k = 1 \dots n$:
\begin{itemize}
\item
$x^{k-1} \zeta_k (a_1) = (-1)^{n-1}\frac{(n-1)!}{(n-k)!} a_1$;
\item
$w \zeta_k(a_1) = (-1)^{n-k} \zeta_{n+1-k}(a_1)$.
\end{itemize}
In particular any of these formula imply $\zeta_1(a_1) = (-1)^{n-1} a_1$.
\end{step}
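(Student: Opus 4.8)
The plan is to grind the statement down to a single explicit computation inside one copy of $M = \Sym^{n-1}\Nat G_1$ and then invoke linearity and additivity over direct summands. Every ingredient of the statement — the operator $x = \log u$, the element $w$, the filtration $Z_k(V)$, the diagonal pieces $\check{Z}_k(V) = Z_k(V)\cap w\cdot Z_{n+1-k}(V)$, and the maps $\zeta_k$ — is manufactured from the $G_1$-action together with the subgroup $U$; so the first task is to see that the $U$-data is really $U_1$-data. The containment $(u-1)^n = 0$ in $\End V$ is clear, since $u\in G_1$ acts on each summand $\Sym^{n-1}\Nat G_1$ as a regular unipotent Jordan block of size $n$ (here $\mathrm{char}\,\K$ being $0$ or $\geq 2n+1 > n$ guarantees that $\Sym^{n-1}\Nat G_1$ keeps its classical shape and that $(n-1)!$ is invertible). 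By \cite[Variations n$^\circ$5 and n$^\circ$6]{TV-I} this forces $U$-length $\leq n$ and identifies the $U$-filtration with the $u$-Jordan filtration $Z_k(V) = \ker(u-1)^k = \ker x^k$; combined with $Z_{n-1}(V) = \ker x^{n-1}\subsetneq V$ this gives $U$-length exactly $n$. Hence $Z_k(V)$, $\check{Z}_k(V) = \ker x^k\cap w\ker x^{n+1-k}$ and each $\zeta_k$ depend only on $V|_{G_1}$; as the asserted identities are $\K_1$-linear in $a_1$ and compatible with direct sums, it suffices to prove Claim \ref{v:Symn-1NatSL2K:st:K1} for $V = M$.

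\textbf{The model computation.} On $M$ take the monomial basis $(e_j)_{1\leq j\leq n}$, $e_j = X^{n-j}Y^{j-1}$, normalised so that $x\cdot e_j = (j-1)e_{j-1}$ (with $x\cdot e_1 = 0$) and $w\cdot e_j = (-1)^{n-j}e_{n+1-j}$; the latter is the right normalisation because it is the one making $w^2$ act by the central scalar $(-1)^{n-1}$. Then $Z_k(M) = \langle e_1,\dots,e_k\rangle$, $w\,Z_{n+1-k}(M) = \langle e_k,\dots,e_n\rangle$, so $\check{Z}_k(M) = \langle e_k\rangle$, whence $M = \bigoplus_{k=1}^n\check{Z}_k(M)$ and $x$ maps $\check{Z}_{k+1}(M) = \langle e_{k+1}\rangle$ onto $\check{Z}_k(M) = \langle e_k\rangle$. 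Using $x^{m}e_j = (j-1)(j-2)\cdots(j-m)\,e_{j-m}$ one gets
\[
\zeta_k(e_1) = \frac{1}{(n-k)!}\,x^{n-k}w\cdot e_1 = \frac{(-1)^{n-1}}{(n-k)!}\,x^{n-k}e_n = (-1)^{n-1}\binom{n-1}{k-1}e_k,
\]
which lies in $\check{Z}_k(M)$ with invertible coefficient (its prime factors are $\leq n-1 < \mathrm{char}\,\K$), so $\zeta_k\colon Z_1(M)\to\check{Z}_k(M)$ is an isomorphism; in particular $\zeta_1(e_1) = (-1)^{n-1}e_1$ and $\zeta_n(e_1) = (-1)^{n-1}e_n = w\cdot e_1$. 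From this the two displayed formulae are pure bookkeeping: applying $x^{k-1}$ and using $x^{k-1}e_k = (k-1)!\,e_1$ gives $x^{k-1}\zeta_k(e_1) = (-1)^{n-1}\binom{n-1}{k-1}(k-1)!\,e_1 = (-1)^{n-1}\frac{(n-1)!}{(n-k)!}e_1$; applying $w$ and using $\binom{n-1}{k-1} = \binom{n-1}{n-k}$ gives $w\,\zeta_k(e_1) = (-1)^{n-1}\binom{n-1}{k-1}(-1)^{n-k}e_{n+1-k} = (-1)^{n-k}\zeta_{n+1-k}(e_1)$; and $x\,\zeta_{k+1} = (n-k)\zeta_k$ is immediate from the definition of $\zeta$.

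\textbf{Expected obstacle.} There is no idea here, only two places that demand care. The first is fixing the normalisations of the basis, above all the overall sign in the action of $w$: that is exactly what turns $\zeta_1(a_1)$ into $(-1)^{n-1}a_1$ rather than $a_1$, and it is the one step where an honest convention check, rather than a formula, is needed. The second, and the one I expect to be genuinely delicate, is the reduction from the big unipotent $U$ to $U_1$, i.e. the identity $Z_k(V) = \ker x^k$: I have taken it from \cite{TV-I}, but if that reference delivers only the length bound and not the identification of the filtration, the residual gap is to show that the highest-weight subspace $\ker x$ is fixed pointwise by all of $U$ (equivalently that $x_\lambda = \log u_\lambda$ kills $\ker x$ for every $\lambda\in\K$). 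This cannot follow from the abstract group $U\cong(\K_+)$ alone, so one would feed in here the small fragment of the Steinberg relations — the rule $t_\mu u_\lambda t_\mu^{-1} = u_{\lambda\mu^2}$ together with $u_\lambda wu_{\lambda^{-1}}wu_\lambda w = t_\lambda$ — that is in any case exploited in the later steps.
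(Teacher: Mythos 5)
Your model computation inside $\Sym^{n-1}\Nat G_1$ is correct (and your normalisation $w\cdot e_j=(-1)^{n-j}e_{n+1-j}$ is indeed the right one, even though "$w^2$ acts by $(-1)^{n-1}$" alone would not pin down the sign), but that part is exactly what the paper dismisses as "by inspection". The genuine content of the Claim, and the only place where the hypotheses "characteristic $0$ or $\geq 2n+1$" and "$\K$ radically closed" are actually used, is the identification of the $U$-filtration with the $U_1$-filtration, i.e.\ $Z_k(V)=C_k(V)=\ker x^k$ where $C_k$ is the central series for $U_1=U\cap G_1$. This you do not prove: you outsource it to \cite[Variations n$^\circ$5 and n$^\circ$6]{TV-I}, but in the paper that reference is invoked only over $\Q$ and only for the \emph{length bound} ("every element of $U$ has nilpotence order at most $n$"), not for identifying the filtration; and your own fallback paragraph concedes the point without closing it.

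Your proposed patch -- "feed in the Steinberg relations" -- cannot suffice by itself. Showing $C_V(U)=C_V(u)$ is equivalent (via $t_\mu u t_\mu^{-1}=u_{\mu^2}$ and quadratic closure) to showing that $\ker x$ is $T$-invariant, and $T$-invariance of the fixed space of a single unipotent element is not a formal consequence of the relations: it genuinely uses the isotypicity hypothesis. The paper's argument is: by inspection in the model, $C_k=\{v\in V:\ \forall\lambda\in\K_1^\times,\ t_\lambda\cdot v=\lambda^{n+1-2k}v\}$ -- this is where characteristic $0$ or $\geq 2n+1$ enters, to separate the weights $n-1,n-3,\dots,-(n-1)$ over the prime field; since these eigenvalues lie in $\K_1$ and the full torus $T$ acts $\K_1$-linearly and commutes with $T_1$, each $C_k$ is $T$-invariant; hence $C_1=t\cdot C_1=C_V(tU_1t^{-1})$ for all $t\in T$, and because every element of $\K$ is a square (radical closure) the $T$-conjugates of $u$ exhaust $U$, giving $C_1\leq C_V(U)=Z_1$, whence $C_1=Z_1$ and, by induction, $C_k=Z_k$. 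Without this step your reduction "the $U$-data is really $U_1$-data" is an assertion, not a proof, so the proposal has a real gap at precisely the point the Claim is about.
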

\begin{proofclaim}
We keep writing $U_1$ for $U \cap G_1$.
Define $C_0 = \{0\}$, $C_{k+1}/C_k = C_{V/C_k}(U_1)$, and $\check{C}_k = C_k \cap w \cdot C_{n+1-k}$.

By inspection in $\Sym^{n-1} \Nat G_1$, one sees that $\ell_{U_1}(V) = n$, that $V = \oplus_{k = 1}^n \check{C}_k$, that the maps $\zeta_k$ define additive isomorphisms $C_1 \simeq \check{C}_k$ and $x: \check{C}_{k+1} \to \check{C}_k$ likewise, and also that the announced formula are correct. So it suffices to check $C_k = Z_k$ for any $k = 1 \dots n$.

Always by inspection, $C_k = \{v \in V: \forall \lambda \in \K_1^\times$, $t_\lambda\cdot v = \lambda^{n+1-2k} v\}$ (here we use the assumption that the characteristic, if not zero, is $\geq 2n+1$). But for $\lambda\in \K_1^\times$, $\lambda^{n+1-2k}$ lies in $\K_1$ which is the prime field; since the action of $T$ is compatible with the $\Z$-module structure, it is compatible with the $\K_1$-vector space structure. It follows that $C_k$ is $T$-invariant.

Hence $C_1 \leq T\cdot C_1 \leq T \cdot C_V(U_1) = C_V(\{t u t^{-1}: t \in T\})$. Now every element in $\K$ is a square, so $C_1 \leq C_V(U) = Z_1$ and equality follows. Then use induction.
\end{proofclaim}

It therefore makes sense to let $x_\lambda = \log u_\lambda = \sum_{k \geq 1} (-1)^{k+1}\frac{(u_\lambda -1)^k}{k}$.


\begin{step}[a Timmesfeld equation]
For $k = 1 \dots n$, $\lambda \in \K^\times$, $a_1 \in Z_1$:
\[t_\lambda\cdot \zeta_k(a_1) = \frac{(-1)^{n-1}}{n-1} \zeta_k x_{\lambda^{n+1-2k}}\zeta_2(a_1)\]
\end{step}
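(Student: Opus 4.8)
The plan is to compute $t_\lambda \cdot \zeta_k(a_1)$ directly using the Steinberg relation $u_\mu w u_{\mu^{-1}} w u_\mu w = t_\mu$ (with a suitable $\mu$) together with the structural information from Claim \ref{v:Symn-1NatSL2K:st:K1}. The key observation is that $t_\lambda$ acts on the graded piece $\check Z_k$ as a scalar only when $\lambda$ ranges over the prime field; for general $\lambda \in \K^\times$ we must express $t_\lambda$ via unipotents, and the point of the desired formula is exactly that $t_\lambda \cdot \zeta_k(a_1)$ is again a $\zeta_k$ of something computable — namely of $\frac{(-1)^{n-1}}{n-1} x_{\lambda^{n+1-2k}} \zeta_2(a_1)$.

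First I would record the commutation relation $t_\mu u_\lambda t_{\mu^{-1}} = u_{\lambda\mu^2}$ at the level of $\End V$, which after taking logarithms gives $t_\mu x_\lambda t_{\mu^{-1}} = x_{\lambda\mu^2}$, and in particular $t_\mu x t_{\mu^{-1}} = x_{\mu^2}$. I would also note $w t_\lambda w^{-1} = t_{\lambda^{-1}}$. Next, since $\zeta_k(a_1) = \frac{1}{(n-k)!} x^{n-k} w \cdot a_1$ and $a_1 \in Z_1 = C_V(U)$ while $t_\lambda a_1 = \lambda^{n-1} a_1$ is false in general (that scalar action only holds over $\K_1$) — rather, the correct starting point is $a_1 \in \check Z_1$ with $w a_1 = \zeta_n(a_1)$, and one wants to slide $t_\lambda$ past $x^{n-k} w$. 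Using $t_\lambda x^{n-k} = x_{\lambda^2}^{n-k} t_\lambda$ and then $t_\lambda w = w t_{\lambda^{-1}}$, we get $t_\lambda \zeta_k(a_1) = \frac{1}{(n-k)!} x_{\lambda^2}^{n-k} w \cdot (t_{\lambda^{-1}} a_1)$. So everything reduces to understanding $t_{\lambda^{-1}} a_1$ for $a_1 \in Z_1$, i.e. the action of $T$ on the bottom layer, which is precisely where the Steinberg relation $u wuw uw = t$ (appropriately scaled) must be invoked to rewrite $t_{\lambda^{-1}}$ in terms of $x_\mu$'s and $w$.

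The heart of the computation is therefore: expand $t_{\lambda^{-1}} \cdot a_1$ using $t_\mu = u_\mu w u_{\mu^{-1}} w u_\mu w$, pass to the additive picture via logarithms where possible, and track the result through the $\zeta$-machinery of Claim \ref{v:Symn-1NatSL2K:st:K1}. The formulas $x^{k-1}\zeta_k(a_1) = (-1)^{n-1}\frac{(n-1)!}{(n-k)!} a_1$ and $w\zeta_k(a_1) = (-1)^{n-k}\zeta_{n+1-k}(a_1)$ let one collapse long words in $x$ and $w$ applied to $a_1$ back into a single $\zeta_j$-term, and the exponent $n+1-2k$ appears because $t_\mu$ scales the weight-$(n+1-2k)$ space of $\Sym^{n-1}$ by $\mu^{n+1-2k}$. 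I expect the cleanest route is to verify the identity first on each $\Sym^{n-1}\Nat G$ summand — where it is a finite-dimensional linear-algebra check that both sides scale $\zeta_k(a_1)$ correctly and that $\zeta_k x_{\lambda^{n+1-2k}}\zeta_2(a_1)$ expands to the right multiple — and then argue the general identity follows because it is an identity between elements of $\langle G \rangle \subseteq \End V$ that holds on a generating configuration; but more likely the author wants a direct group-ring manipulation.

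The main obstacle will be the bookkeeping in the Steinberg expansion of $t_\lambda$: unlike the $\SL_2(\Z)$ case one cannot simply set $x_\lambda = \lambda x$ (that identification is only available over $\Q$, and here $\K$ is arbitrary of large characteristic), so each $x_\mu$ stays formally distinct and one must carry the scalars $\lambda^{\pm 1}, \lambda^{\pm 2}$ through the three unipotent blocks and the three $w$'s, using $x_\mu w = w x_{\mu'}$-type moves only via $t$-conjugation. Keeping the nilpotence truncation at order $n$ in play throughout — so that $x_\mu = \sum_{j=1}^{n-1}(-1)^{j+1}(u_\mu-1)^j/j$ is a polynomial and products of more than $n$ factors of $x$ vanish on the relevant layer — is what makes the computation finite, and checking that the surviving terms assemble into exactly $\frac{(-1)^{n-1}}{n-1}\zeta_k x_{\lambda^{n+1-2k}}\zeta_2(a_1)$, with the constant $\frac{(-1)^{n-1}}{n-1}$ correct, is the delicate endpoint. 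I would cross-check the constant in the small cases $n=2$ and $n=3$ before trusting the general manipulation.
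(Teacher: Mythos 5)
Your reduction $t_\lambda\zeta_k(a_1)=\frac{1}{(n-k)!}\,x_{\lambda^2}^{\,n-k}\,w\,t_{\lambda^{-1}}a_1$ is correct, but the plan for the remaining (and only real) problem --- evaluating $t_{\lambda^{-1}}$ on $Z_1$ by expanding the Steinberg word $u_{\lambda^{-1}}wu_{\lambda}wu_{\lambda^{-1}}w$ --- does not close up. After the first $w$ you sit in $\check Z_n$ and must apply $u_{\lambda^{-1}}=\exp(x_{\lambda^{-1}})$, i.e.\ you need to know how $x_\mu$ acts on each layer $\check Z_j$ for \emph{arbitrary} $\mu\in\K$; but Claim \ref{v:Symn-1NatSL2K:st:K1} only pins down the action of $x=x_1$ (prime-field data), and the action of the $x_\mu$'s and $t_\lambda$'s for $\lambda\notin\K_1$ is exactly the unknown that the Timmesfeld equation is designed to determine (it is what defines the scalar multiplication afterwards). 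So the expansion merely rewrites one unknown in terms of others: the computation is circular as proposed. The same circularity undermines the fallback of ``verifying on each $\Sym^{n-1}\Nat G$ summand'': the hypothesis gives a decomposition only as $\K_1[G_1]$-modules, the summands are not known to be $G$-invariant, and the $G$-action on them is precisely what is at stake. A telling symptom is that your argument nowhere uses the hypothesis that $\K$ is $2(n-1)!!$-radically closed, which is essential to the author's proof and not decorative.

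What the paper actually does is quite different and never expands the Steinberg word in this claim. It first proves the auxiliary identity $x_{\lambda^{n+1-2k}}\zeta_{k+1}(a_1)=(n-k)\,t_\lambda\zeta_k(a_1)$ by \emph{descending induction} on $k$ starting at the middle weight $k=\lfloor\frac{n+1}{2}\rfloor$, where the $T$-action can be pinned down with no prior knowledge: for odd $n$, $w$ acts as $\pm1$ on $\check Z_k$ and inverts $T$, forcing $T$ to centralise $\check Z_k$; for even $n$ a square-root trick $\lambda=\ell^2$ together with surjectivity of $\zeta_k\colon Z_1\to\check Z_k$ does the job. The induction step uses only $t_\mu x_\nu t_\mu^{-1}=x_{\nu\mu^2}$, $wt_\lambda w^{-1}=t_\lambda^{-1}$, and extraction of roots in $\K$ --- this is where the radical closedness enters. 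The stated equation is then recovered from the identity $\zeta_k x^{k-1}=(-1)^{n-1}\frac{(n-1)!}{(n-k)!}$ on $\check Z_k$ together with the fact that $x_\lambda=t_\ell x t_\ell^{-1}$ maps $\check Z_{k+1}$ into $\check Z_k$, and the range $k>\lfloor\frac{n+1}{2}\rfloor$ follows by the $w$-symmetry $w\zeta_k=(-1)^{n-k}\zeta_{n+1-k}$. If you want to rescue your route you need an independent mechanism for evaluating $x_\mu$ on the layers; the paper's bootstrap from the middle layer outward is exactly that missing idea, and your proposal as written does not contain a substitute for it.
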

\begin{proofclaim}
We first show something completely different: let us prove by descending induction on $k = \lfloor \frac{n+1}{2}\rfloor\dots 1$:
\[x_{\lambda^{n+1-2k}}\zeta_{k+1}(a_1) = (n-k) t_\lambda \zeta_k (a_1)\]
\begin{itemize}
\item
Let $k = \lfloor \frac{n+1}{2}\rfloor$. There are two cases, depending on $n$ modulo $2$.
\begin{itemize}
\item
If $n$ is odd, then $n = 2k - 1$, and one has:
\[w \cdot \zeta_k = (-1)^{n-k} \zeta_{n+1-k} = (-1)^{k-1} \zeta_k\]
Depending on $k$ modulo $2$, $w$ inverts or centralises $\check{Z}_k$; in either case $w$ inverts $T$
, so $T$ centralises $\check{Z}_k$. In particular:
\[x_{\lambda^{n+1-2k}}\zeta_{k+1}(a_1) = x \zeta_{k+1}(a_1) = (n-k) \zeta_k(a_1) = (n-k) t_\lambda \zeta_k(a_1)\]
\item
If $n$ is even, then $n = 2k$. Let $\ell \in \K^\times$ be a square root of $\lambda$ and $b_1 \in Z_1$ be such $t_\ell \zeta_k(a_1) = \zeta_k(b_1)$: this exists since $\zeta_k: Z_1 \simeq \check{Z}_k$ is onto. Then:
\begin{align*}
x w t_\ell \zeta_k(a_1) & = x t_\ell^{-1} w \zeta_k(a_1)\\
& = (-1)^{n-k} x t_\ell^{-1} \zeta_{n+1-k}(a_1)\\
& = (-1)^{n-k} t_\ell^{-1} x_\lambda \zeta_{k+1}(a_1)\\
= x w \zeta_k(b_1) & = (-1)^{n-k} x \zeta_{k+1}(b_1)\\
& = (-1)^{n-k} (n-k) \zeta_k(b_1)\\
& = (-1)^{n-k} (n-k) t_\ell \zeta_k(a_1)
\end{align*}
so multiplying by $t_\ell$: $(n-k) t_\lambda \zeta_k(a_1) = x_\lambda \zeta_{k+1}(a_1)$.
\end{itemize}
\item
Suppose the formula holds of $k \geq 2$ and let us prove it at $k-1$. Start with  $x_{\lambda^{2(n+1-2k)}}\zeta_{k+1}(a_1) = (n-k) t_{\lambda^2} \zeta_k(a_1)$ and apply $x_{\lambda^{(n+1-2k)^2}}$:
\begin{align*}
x_{\lambda^{(n+1-2k)^2}} x_{\lambda^{2(n+1-2k)}} \zeta_{k+1}(a_1) & = x_{\lambda^{(n+1-2k)^2}} (n-k) t_{\lambda^2} \zeta_k(a_1)\\
& = (n-k) t_{\lambda^2} x_{\lambda^{(n+1-2k)^2 - 4}} \zeta_k(a_1)\\
& = (n-k) t_{\lambda^2} x_{\lambda^{(n+3-2k)(n-1-2k)}}\zeta_k(a_1)\\
= x_{\lambda^{2(n+1-2k)}} x_{\lambda^{(n+1-2k)^2}} \zeta_{k+1}(a_1) & = x_{\lambda^{2(n+1-2k)}} (n-k) t_{\lambda^{n+1-2k}} \zeta_k (a_1)\\
& = (n-k) t_{\lambda^{n+1-2k}} x \zeta_k(a_1)\\
& = (n-k) (n+1-k) t_{\lambda^{n+1-2k}} \zeta_{k-1}(a_1)
\end{align*}
Multiply by $t_\lambda^{-2}$: $x_{\lambda^{(n+3-2k)(n-1-2k)}}\zeta_k(a_1) = (n+1-k) t_{\lambda^{n-1-2k}}\zeta_{k-1}(a_1)$.
Since $\K$ has all its $(n-1-2k)\th$ roots, rewrite as: $x_{\lambda^{n+3-2k}}\zeta_k(a_1) = (n+1-k) t_\lambda\zeta_{k-1}(a_1)$,
which is the desired formula. This concludes induction and proves the auxiliary formula.
\end{itemize}
We now return to the equation we want: let $k \leq \lfloor \frac{n+1}{2}\rfloor$.

We know that $x$ maps $\check{Z}_{k+1}$ to  $\check{Z}_k$, and we claim that so does $x_\lambda$. Let indeed $\ell \in \K^\times$ be a square root of $\lambda$, so that $x_\lambda = t_\ell x t_{\ell}^{-1}$. Now $\check{Z}_{k+1}$ is $T$-invariant, so $x t_{\ell}^{-1}$ maps $\check{Z}_{k+1}$ to $\check{Z}_k$ which is $T$-invariant, and $x_\lambda$ maps $\check{Z}_{k+1}$ to $\check{Z}_k$.

It follows that $x_{\lambda^{n+1-2k}} x^{n-k-1} w a_1 \in \check{Z}_k$.
We now note, by inspection over $\K_1$, that $\zeta_k x^{k-1} \Id_{\check{Z}_k} = (-1)^{n-1}\frac{(n-1)!}{(n-k)!} \Id_{\check{Z}_k}$.
Therefore:
\begin{align*}
& \quad \frac{(-1)^{n-1}}{n-1} \zeta_k x_{\lambda^{n+1-2k}} \zeta_2(a_1)\\
& = \frac{(-1)^{n-1}}{n-1} \zeta_k x_{\lambda^{n+1-2k}} \frac{1}{(n-2)!} x^{n-2} w a_1\\
& = \frac{(-1)^{n-1}}{(n-1)!} \zeta_k x^{k-1} x_{\lambda^{n+1-2k}} x^{n-k-1} w a_1\\
& = \frac{(-1)^{n-1}}{(n-1)!} (-1)^{n-1} \frac{(n-1)!}{(n-k)!} x_{\lambda^{n+1-2k}} (n-k-1)! \zeta_{k+1}(a_1)\\
& = \frac{1}{n-k} x_{\lambda^{n+1-2k}} \zeta_{k+1}(a_1)\\
& = t_\lambda \zeta_k (a_1)
\end{align*}
So the formula holds of $k \leq \lfloor \frac{n+1}{2}\rfloor$. It then holds as well of $n+1-k$, since:
\begin{align*}
t_\lambda \cdot \zeta_{n+1-k}(a_1) & = (-1)^{n-k} t_\lambda w \zeta_k (a_1)\\
& = (-1)^{n-k} w t_{\lambda^{-1}} \zeta_k (a_1)\\
& = (-1)^{n-k} w \frac{(-1)^{n-1}}{n-1} \zeta_k x_{\lambda^{2k-n-1}} \zeta_2(a_1)\\
& = (-1)^{n-k} \frac{(-1)^{n-1}}{n-1} w \zeta_k x_{\lambda^{2k-n-1}} \zeta_2(a_1)\\
& = \frac{(-1)^{n-1}}{n-1} \zeta_{n+1-k} x_{\lambda^{n+1-2(n+1-k)}} \zeta_2(a_1)
\end{align*}
This completes the proof of the Timmesfeld equation.
\end{proofclaim}

\begin{localnotation*}
For $k = 1 \dots n$, $a_1 \in Z_1$, and $\lambda \in \K^\times$, let:
\[\lambda \cdot \zeta_k(a_1) = \frac{(-1)^{n-1}}{n-1} \zeta_k x_\lambda \zeta_2(a_1)\]
\end{localnotation*}

As $\zeta_k: Z_1 \simeq \check{Z}_k$ is a bijection and $V = \oplus_{k = 1}^n \check{Z}_k$, $\lambda \cdot v$ is defined for any $v \in V$.

\begin{step}
This defines a $\K$-vector space structure compatible with the action of $G$.
\end{step}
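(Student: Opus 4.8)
The plan is to write $m_\lambda\colon V\to V$ for the map $v\mapsto\lambda\cdot v$, to check first that $\lambda\mapsto m_\lambda$ is a $\K$-vector space structure extending the given $\K_1$-structure, and then that each $m_\lambda$ commutes with $G$. The basic observation is that $m_\lambda$ preserves every $\check Z_k$, and that on $\check Z_k$ one has $m_\lambda=\zeta_k\circ P_\lambda\circ\zeta_k^{-1}$, where $P_\lambda=\frac{(-1)^{n-1}}{n-1}x_\lambda\zeta_2=x_\lambda\circ(x|_{\check Z_2})^{-1}$ is an endomorphism of $Z_1$ \emph{not depending on $k$}; here one uses $x\zeta_2=(n-1)\zeta_1=(n-1)(-1)^{n-1}\Id$, so that $x|_{\check Z_2}\colon\check Z_2\to Z_1$ is invertible. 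Since $V=\oplus_k\check Z_k$ and the $\zeta_k$ are fixed additive bijections, each vector-space axiom for $m$ on $V$ is equivalent to the same statement for $P$ on $Z_1$. Additivity in $v$ is immediate ($\zeta_k$ and $x_\lambda$ are additive); $P_0=0$ since $x_0=\log 1=0$; $P_1=x|_{\check Z_2}\circ(x|_{\check Z_2})^{-1}=\Id$; and $P_{\lambda+\mu}=P_\lambda+P_\mu$ because $u_{\lambda+\mu}=u_\lambda u_\mu$ with $u_\lambda,u_\mu$ commuting unipotents, whence $x_{\lambda+\mu}=x_\lambda+x_\mu$ in $\End V$. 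Finally, for $\lambda$ in the prime field $\K_1$ one has $x_\lambda=\lambda x$, so $P_\lambda=\lambda\,\Id$ and $m_\lambda$ is the old scalar multiplication (the $\zeta_k$ being $\K_1$-linear): the new structure extends the given one.

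The one substantial point is multiplicativity, $P_{\lambda\mu}=P_\lambda P_\mu$, and for this I would invoke the Timmesfeld equation already proved, which says exactly that $t_\ell$ acts on $\check Z_k$ as $m_{\ell^{\,n+1-2k}}$. Choose $k_0$ with $n+1-2k_0\in\{1,2\}$, namely $k_0=n/2$ if $n$ is even and $k_0=(n-1)/2$ if $n$ is odd (in both cases $1\le k_0\le n$). Then $\ell\mapsto t_\ell|_{\check Z_{k_0}}$ is a group homomorphism from $\K^\times$ into the invertible endomorphisms of $\check Z_{k_0}$, equal to $\ell\mapsto m_\ell|_{\check Z_{k_0}}$ when $n$ is even and to $\ell\mapsto m_{\ell^2}|_{\check Z_{k_0}}$ when $n$ is odd; transporting through the isomorphism $\zeta_{k_0}$, the map $\ell\mapsto P_\ell$ (resp.\ $\ell\mapsto P_{\ell^2}$) is a homomorphism. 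When $n$ is even this already yields $P_{\lambda\mu}=P_\lambda P_\mu$ on $\K^\times$; when $n$ is odd, the hypothesis that $\K$ is $2(n-1)!!$-radically closed forces $\K$ to contain a square root of each of its elements (since $(\alpha^2)^{(n-1)!!}=\alpha^{2(n-1)!!}$), so every element of $\K^\times$ is a square and $P$ is again multiplicative. Together with the zero case and the verifications of the previous paragraph, this makes $V$ a $\K$-vector space.

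For compatibility with $G$ the plan is to check that $m_\lambda$ commutes with a generating set of $G=\SL_2(\K)$; by the Steinberg relations and the Bruhat decomposition $G=\langle U,w\rangle$. A one-line computation with $x\zeta_{k+1}=(n-k)\zeta_k$ gives $xm_\lambda=m_\lambda x$, and one with $w\zeta_k=(-1)^{n-k}\zeta_{n+1-k}$ gives $wm_\lambda=m_\lambda w$. Now that multiplicativity is known, the Timmesfeld equation shows that for $\ell\in\K^\times$ the operator $t_\ell$ acts on each $\check Z_k$ as a power of $m_\ell$, and powers of $m_\ell$ commute with $m_\lambda$ because $m_\ell m_\lambda=m_{\ell\lambda}=m_\lambda m_\ell$; hence $t_\ell m_\lambda=m_\lambda t_\ell$. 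Given $\nu\in\K^\times$, pick $\ell\in\K$ with $\ell^2=\nu$; then $u_\nu=t_\ell u t_\ell^{-1}$, so $x_\nu=t_\ell x t_\ell^{-1}$ is a composite of operators each commuting with $m_\lambda$, whence $x_\nu m_\lambda=m_\lambda x_\nu$ and $u_\nu=\exp x_\nu$ commutes with $m_\lambda$. Thus $m_\lambda$ centralises $U$ and $w$, hence all of $G$, for every $\lambda\in\K$: $V$ with the new scalar multiplication is a $\K[G]$-module.

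The main obstacle is precisely the multiplicativity $P_{\lambda\mu}=P_\lambda P_\mu$; everything else is bookkeeping with the $\zeta_k$-formulae. The trick is that the Timmesfeld equation already repackages the torus action as scalar multiplication by a power, so multiplicativity falls out of $T$ being a homomorphic image — and the radical-closure hypothesis enters exactly to exhibit every scalar as a square, which is needed only for $n$ odd, the even case being free. One should keep an eye on the edge cases ($k_0\ge 1$, the exponent landing in $\{1,2\}$, the scalar $0$) and on the fact that every identity for $m$ on $V$ is equivalent to the corresponding identity for $P$ on $Z_1$ because $V=\oplus_k\check Z_k$ with $m_\lambda|_{\check Z_k}$ conjugate to $P_\lambda$ by the fixed isomorphism $\zeta_k$.
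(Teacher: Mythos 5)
Your proof is correct and follows essentially the same route as the paper: additivity is immediate, multiplicativity is extracted from the Timmesfeld equation via commutativity of $T$ and the radical-closure hypothesis, and compatibility with $G$ is checked on generators ($w$, $T$, $x$/$u$, then $u_\nu=t_\ell u t_\ell^{-1}$ by conjugation, with $G=\<U,w\>$). Your repackaging through the single operator $P_\lambda$ on $Z_1$ and one well-chosen weight space $\check Z_{k_0}$ (so that only square roots of scalars are needed) is a mild streamlining of the paper's argument, not a different method.
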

\begin{proofclaim}
Additivity in $a_1$ is obvious. So is additivity in $\lambda$: since $\zeta_2(a_1) \in Z_2$, one has $x_{\lambda+\mu}\zeta_2(a_1) = x_\lambda \zeta_2(a_1) + x_\mu \zeta_2(a_1)$.

By the Timmesfeld equation, $\lambda^{n+1-2k}\cdot \zeta_k(a_1) = t_\lambda\cdot \zeta_k(a_1)$. Now $\K$ has all its $(n+1-2k)\th$ roots and $T$ is commutative, so multiplicativity in $\lambda$ follows, and linearity of $T$ as well.
Linearity of $w$ is obvious, since:
\begin{align*}
w (\lambda\cdot \zeta_k(a_1)) & = \frac{(-1)^{n-1}}{n-1} w \zeta_k x_\lambda \zeta_2(a_1)\\
& = \frac{(-1)^{n-1}(-1)^{n-k}}{n-1} \zeta_{n+1-k} x_\lambda \zeta_2(a_1)\\
& = (-1)^{n-k} \lambda \cdot \zeta_{n+1-k}(a_1)\\
& = \lambda \cdot ((-1)^{n-k} \zeta_{n+1-k}(a_1))\\
& = \lambda \cdot (w \zeta_k(a_1))
\end{align*}

To prove linearity of $G$ it therefore suffices to prove linearity of $u$, which amounts to proving that all restrictions $x: \check{Z}_{k+1}\to \check{Z}_k$ are linear, which amounts to proving that all the maps $\zeta_k$ are. Now remember that $\zeta_1(a_1) = (-1)^{n-1} a_1$, so that:
\begin{align*}
\zeta_k(\lambda \cdot a_1) & = (-1)^{n-1} \zeta_k(\lambda\cdot \zeta_1(a_1))\\
& = \frac{(-1)^{n-1}(-1)^{n-1}}{n-1} \zeta_k \zeta_1 x_\lambda \zeta_2(a_1)\\
& = \frac{(-1)^{n-1}}{n-1} \zeta_k x_\lambda \zeta_2(a_1)\\
& = \lambda \cdot \zeta_k (a_1)
\end{align*}
as desired.
\end{proofclaim}
$V$ is therefore a $\K [G]$-module, clearly of the desired form.
\end{proof}

\begin{remark*}
Assuming that $\K$ is quadratically closed might be necessary for even $n$ as well: we could not complete the analysis with $n = 4$ and $\K$ only cubically closed.
\end{remark*}

\begin{remark*}
For the computations properly said, it would be enough to work in characteristic $\geq n$. The assumption that the characteristic, if not zero, is $\geq 2n+1$, is used only in Claim \ref{v:Symn-1NatSL2K:st:K1} of the proof, in order to find a $T$-invariant definition of $\check{C}_k$. When the characteristic is too low we found no such definition. 
But supposing $C_1 = Z_1$ suffices to run the argument.

Alternatively, suppose that $\K$ has characteristic $0$ or $\geq n+1$ and is $2(n-1)!!$-radically closed. Let $\mu \in \K$ be an $(n-1)!!\th$ root of unity; let $\K_\mu = \K_1[\mu]$ and $G_\mu = \SL_2(\K_\mu)$. If $V$ is a $\K_\mu [G]$-module such that $V \simeq \oplus_I \Sym^{n-1} \Nat G_\mu$ as $\K_\mu [G_\mu]$-modules, then the conclusion of Theorem \ref{t:Symn-1NatSL2K} holds since one may characterise $C_k$ as $\{v \in V: t_\mu\cdot v = \mu^{n+1-2k} v\}$, which proves $T$-invariance.
\end{remark*}
%
%
%

As an illustration, here is a cubic analogue of Timmesfeld's Quadratic Theorem.

\begin{corollary*}
Let $\K$ be a quadratically closed field of characteristic $\neq 2, 3$, $G = \SL_2(\K)$, and $V$ be a simple $\Z[G]$-module of $U$-length $3$. Suppose that $C_V(u) = C_V(U)$ for any $u \in U\setminus\{1\}$. Then there exists a $\K$-vector space structure on $V$ making it isomorphic to ${\rm Ad} \PSL_2(\K)$.
\end{corollary*}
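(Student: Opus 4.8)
The plan is to restrict $V$ to the subgroup $G_1 = \SL_2(\K_1)$ over the prime subfield $\K_1$ of $\K$, to recognise the restricted module through Theorem~\ref{t:SL2:series}, to feed the conclusion into Theorem~\ref{t:Symn-1NatSL2K}, and finally to use simplicity of $V$ to pin down the precise answer. A simple $\Z[G]$-module is a vector space over $\Q$ or over some $\F_q$, and I would first check that this prime field is forced to be $\K_1$: the element $u$ acts nontrivially since the $U$-length is $3>1$, and $(u-1)^3=0$; if $V$ were an $\F_q$-module with $q\neq\mathrm{char}\,\K$, then in characteristic $0$ the element $u$ has finite order $M>1$ in $\End V$ while $t_Mut_M^{-1}=u^{M^2}=(u^M)^M=1$ is $G$-conjugate to $u$, a contradiction, and in characteristic $p$ the relation $u^p=1$ gives $p(u-1)+\binom{p}{2}(u-1)^2=0$, so $q\neq p$ would force $(u-1)^2=0$, contradicting the $U$-length. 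Thus $V$ is a $\K_1$-vector space; the central involution $i=w^2$ then acts on $V$ as a scalar $\varepsilon\in\{+1,-1\}$ (here $\mathrm{char}\,\K\neq2$ and $V$ is simple), and, writing $U_1=U\cap G_1$, the hypothesis ``$C_V(u)=C_V(U)$ for all $u\in U\setminus\{1\}$'' gives in particular $C_V(U)=C_V(U_1)=Z_1(V)$, since $C_V(U)\subseteq C_V(U_1)\subseteq C_V(u_1)=C_V(U)$ for any $u_1\in U_1\setminus\{1\}$.

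Restrict to $G_1$. Since $(u-1)^3=0$, Theorem~\ref{t:SL2:series} applies (in positive characteristic, its $\F_p$-analogue, which for length $\leq3$ only requires $6$ to be invertible; the promotion of the prime-field action to all of $G_1$ is supplied by the Steinberg relations as in \S\ref{S:SL2:flesh}), and because the cube vanishes the series even splits: $V|_{G_1}\simeq\bigoplus_{k\in\{0,1,2\}}\bigoplus_{I_k}\Sym^k\Nat G_1$. Now $i$ acts on $\Sym^k\Nat G_1$ by $(-1)^k$; if $\varepsilon=-1$ then only $k=1$ survives, so $V|_{G_1}$ is a sum of copies of $\Nat G_1$ and satisfies $(u-1)^2=0$, and since $\K$ is quadratically closed every $u_\lambda$ is $G$-conjugate to $u$, so $(u_\lambda-1)^2=0$ for all $\lambda$, forcing $U$-length $\leq2$ — impossible; hence $\varepsilon=+1$ and $I_1=\emptyset$. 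To eliminate $I_0$, note the $\Sym^0$-isotypic summand equals $C_V(G_1)=C_V(U_1)\cap C_V(w)=C_V(U)\cap C_V(w)$ (its intersection with the $\Sym^2$-part is $0$ because $w$ carries the $U_1$-fixed line of $\Sym^2$ off itself); for $v$ in this space and $g\in G$, the Bruhat decomposition $G=B\sqcup BwU$ ($B=N_G(U)$) together with the $B$-invariance of $Z_1(V)=C_V(U)$ gives $g\cdot v\in Z_1(V)$ — indeed if $g=bwu'$ then $g\cdot v=bw\cdot v=b\cdot v$, as $v$ is fixed by $u'$ and by $w$. Hence $\langle G\cdot C_V(G_1)\rangle\leq Z_1(V)\subsetneq V$, so simplicity forces $C_V(G_1)=0$ and $I_0=\emptyset$; thus $V|_{G_1}\simeq\bigoplus_I\Sym^2\Nat G_1$.

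Now apply Theorem~\ref{t:Symn-1NatSL2K} with $n=3$: the characteristic is $0$ or $\geq5$, and ``quadratically closed'' implies ``$4$-radically closed'', which is the required $2(n-1)!!$-radical closedness; the one borderline case $\mathrm{char}\,\K=5<2n+1$ is covered by the remark following that theorem, since $C_1=Z_1$ holds here — this is precisely $C_V(U_1)=C_V(U)$. The theorem then endows $V$ with a compatible $\K$-vector space structure and a $\K[G]$-isomorphism $V\simeq\bigoplus_J\Sym^2\Nat G$. Each summand is a nonzero $\K[G]$-submodule, hence a nonzero $\Z[G]$-submodule, so simplicity of $V$ forces $|J|=1$. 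Finally $\Sym^2\Nat G\simeq\mathrm{Ad}\,\PSL_2(\K)$ as $\K[G]$-modules whenever $\mathrm{char}\,\K\neq2$ (one may take $e\mapsto X^2$, $h\mapsto-2XY$, $f\mapsto-Y^2$), and the corollary follows with this $\K$-structure.

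I expect the main obstacle to be the middle step, the identification of $V|_{G_1}$ as $\Sym^2$-isotypic: extracting the $G_1$-decomposition is a citation, but discarding the parasitic $\Sym^0$ and $\Sym^1$ summands requires both the central-character bookkeeping and the Bruhat argument that sweeps $C_V(G_1)$ into $Z_1(V)$, the latter relying essentially on the hypothesis $C_V(u)=C_V(U)$ to identify $C_V(U_1)$ with $C_V(U)$. A secondary nuisance is matching the characteristic hypotheses of Theorem~\ref{t:Symn-1NatSL2K} when $\mathrm{char}\,\K=5$, which is exactly what that theorem's remark is for.
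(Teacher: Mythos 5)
Your proposal is correct and follows essentially the same route as the paper's (deliberately terse) proof: analyse the restriction to the prime field via Theorem~\ref{t:SL2:series}, use the hypothesis $C_V(u)=C_V(U)$ together with simplicity to retain only adjoint summands, then apply Theorem~\ref{t:Symn-1NatSL2K}. The details you supply — forcing the prime field, killing the $\Nat$ part via the central involution and quadratic closure, sweeping $C_V(G_1)$ into $Z_1(V)$ by the Bruhat argument of \S\ref{s:SL2:Bruhat}, invoking the remark after Theorem~\ref{t:Symn-1NatSL2K} for characteristic $5$, and identifying $\Sym^2\Nat G$ with ${\rm Ad}\,\PSL_2(\K)$ — are precisely the steps the paper leaves implicit.
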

\begin{proof}
Analyse over $\K_1$ with Theorem \ref{t:SL2:series}; since $C_V(u) = C_V(U)$ and by simplicity, there are only adjoint summands. Then apply Theorem \ref{t:Symn-1NatSL2K}.
\end{proof}

\medskip
\hrule
\medskip

Future variations will explore minuscule modules for the simple algebraic groups.

\bibliographystyle{plain}
\bibliography{../English/Variationen}

\begin{thebibliography}{1}

\bibitem{AJLProjective}
Henning~Haahr Andersen, Jens J{\o}rgensen, and Peter Landrock.
\newblock The projective indecomposable modules of {${\rm SL}(2,\,p^{n})$}.
\newblock {\em Proc. London Math. Soc. (3)}, 46(1):38--52, 1983.

\bibitem{CDSmall}
Gregory Cherlin and Adrien Deloro.
\newblock Small representations of {$\SL_2$} in the finite {M}orley rank
  category.
\newblock {\em Journal of Symbolic Logic}, 77(3):919--933, 2012.

\bibitem{TV-I}
Adrien Deloro.
\newblock Ver\"anderungen \"uber einen {S}atz von {T}immesfeld -- {I}.
  {Q}uadratic actions.
\newblock {\em Confluentes Math.}, 5(2):23--41, 2013.

\bibitem{TV-II}
Adrien Deloro.
\newblock Symmetric powers of {N}at $\mathfrak{sl}_2(\mathbb{K})$.
\newblock {\em Communications in Algebra}, 43(8), 2015.
\newblock To appear.

\bibitem{GCubic}
Matthias Gr\"uninger.
\newblock On cubic action of a rank one group.
\newblock Preprint. arXiv:1106.2310v2, 2014.

\bibitem{LyndonSchupp}
Roger~C. Lyndon and Paul~E. Schupp.
\newblock {\em Combinatorial group theory}.
\newblock Classics in Mathematics. Springer-Verlag, Berlin, 2001.
\newblock Reprint of the 1977 edition.

\bibitem{SArbresEN}
Jean-Pierre Serre.
\newblock {\em Trees}.
\newblock Springer Monographs in Mathematics. Springer-Verlag, Berlin, 2003.
\newblock Translated from the French original by John Stillwell, Corrected 2nd
  printing of the 1980 English translation.

\bibitem{Smith}
Stephen~D. Smith.
\newblock Quadratic action and the natural module for {${\rm SL}_2(k)$}.
\newblock {\em J. Algebra}, 127(1):155--162, 1989.

\bibitem{Timmesfeld}
Franz~Georg Timmesfeld.
\newblock {\em Abstract root subgroups and simple groups of {L}ie type},
  volume~95 of {\em Monographs in Mathematics}.
\newblock Birkh\"auser Verlag, Basel, 2001.

\end{thebibliography}

\end{document}